\documentclass[reqno]{amsart}
\usepackage{amsfonts}
\usepackage{mathrsfs}
\usepackage{cite}
\usepackage[dvips]{color}
\allowdisplaybreaks
%

\date{\today}

\theoremstyle{plain}
\newtheorem{thm}{Theorem}[section]

\newtheorem{lem}[thm]{Lemma}
\newtheorem{prop}[thm]{Proposition}

\setlength{\arraycolsep}{1.31mm}
\theoremstyle{definition}

\theoremstyle{remark}
\newtheorem{rem}{Remark}[section]
\numberwithin{equation}{section}


\renewcommand{\u}{{\mathbf u}}

\renewcommand{\t}{{\mathbf{B}}}
\renewcommand{\H}{\mathbf{H}}

\newcommand{\R}{{\mathbb R}}
\newcommand{\w}{{\mathbf w}}

\newcommand{\dv}{{\rm div }}
\newcommand{\cu}{{\rm curl\, }}
\newcommand{\E}{{\mathcal E}}


\begin{document}

\title[Low Mach number limit for the full compressible MHD equations]
{Low Mach number limit for the  full compressible
magnetohydrodynamic equations with general initial data}

\author{Song Jiang}
\address{LCP, Institute of Applied Physics and Computational Mathematics, P.O.
 Box 8009, Beijing 100088, P.R. China}
 \email{jiang@iapcm.ac.cn}

\author{Qiangchang Ju}
\address{Institute of Applied Physics and Computational Mathematics, P.O.
 Box 8009-28, Beijing 100088, P.R. China}
 \email{qiangchang\_ju@yahoo.com}

 \author[Fucai Li]{Fucai Li$^*$}
\address{Department of Mathematics, Nanjing University, Nanjing
 210093, P.R. China}
 \email{fli@nju.edu.cn}
 \thanks{$^*$Corresponding author}

\author{Zhouping Xin}
\address{The Institute of Mathematical Sciences, The Chinese University of Hong Kong,
Shatin, NT, Hong Kong}
 \email{zpxin@ims.cuhk.edu.hk}

\keywords{Full compressible magnetohydrodynamic equations, local smooth solution, low Mach number
limit, general initial data}

\subjclass[2000]{76W05, 35B40}

\begin{abstract}
The low Mach number limit for the  full compressible
magnetohydrodynamic equations with general initial data is
rigorously justified in the whole space $\mathbb{R}^3$. First, the uniform-in-Mach-number estimates of the solutions
in a Sobolev space are established on a finite time interval independent of
the Mach number. Then the low Mach number limit is proved by combining these
uniform estimate with a theorem due to M\'etiver and Schochet
[Arch. Ration. Mech. Anal. 158 (2001), 61-90] for the Euler equations
that gives the local energy decay of the acoustic wave equations.

\end{abstract}

\maketitle


\section{Introduction}
In this paper we study the low Mach number limit of local smooth solutions to the
following full compressible magnetohydrodynamic (MHD) equations
with general initial data in the whole space $\mathbb{R}^3$ (see \cite{JT64,KL,LL,PD}):
\begin{align}
&\partial_t\rho +\dv(\rho\u)=0, \label{haa} \\
&\partial_t(\rho\u)+\dv\left(\rho\u\otimes\u\right)+ {\nabla
P}
  =\frac{1}{4\pi}(\cu \H)\times \H+\dv\Psi(\u), \label{hab} \\
&\partial_t\H-\cu(\u\times\H)=-\cu(\nu\,\cu\H),\quad
\dv\H=0, \label{hac}\\
&\partial_t\E+\dv\left(\u(\E'+P)\right)
=\frac{1}{4\pi}\dv((\u\times\H)\times\H)\nonumber\\
& \qquad \qquad \qquad\qquad\qquad\, \, +\dv\Big(\frac{\nu}{4\pi}
\H\times(\cu\H)+ \u\Psi(\u)+\kappa\nabla\theta\Big).
 \label{had}
\end{align}
Here   the unknowns $\rho $, $\u=(u_1,u_2,u_3)\in \R^3$ , $\H=(H_1,H_2,H_3)\in \R^3$,
and $\theta$  denote the density, velocity, magnetic field, and temperature, respectively;
$\Psi(\u)$
is the viscous stress tensor given by
\begin{equation*}
\Psi(\u)=2\mu \mathbb{D}(\u)+\lambda\dv\u \;\mathbf{I}_3
\end{equation*}
with $\mathbb{D}(\u)=(\nabla\u+\nabla\u^\top)/2$,
 $\mathbf{I}_3$  the $3\times 3$ identity matrix,
and $\nabla \u^\top$   the transpose of the matrix
$\nabla \u$; $\E$ is the total energy given by $\E=\E'+|\H|^2/({8\pi})$ and
$\E'=\rho\left(e+|\u|^2/2 \right)$ with $e$ being the internal
energy, $\rho|\u|^2/2$ the kinetic energy, and
$|\H|^2/({8\pi})$ the magnetic energy. The viscosity
coefficients $\lambda$ and $\mu$ of the flow satisfy
 $\mu>0$ and $2\mu+3\lambda>0$. The parameter $\nu>0$ is the magnetic diffusion
coefficient of the magnetic field    and $\kappa>0$   the heat
conductivity. For simplicity, we assume that $\mu,\lambda,\nu$ and
$\kappa$ are constants. The equations of state $P=P(\rho,\theta)$ and
$e=e(\rho,\theta)$ relate the pressure $P$ and the internal energy
$e$ to the density $\rho$ and the temperature $\theta$ of the flow.

Multiplying  \eqref{hab} by $\u$ and \eqref{hac} by $\H/({4\pi})$
and summing over, one finds that
\begin{align}\label{haaz}
&\frac{\rm d}{{\rm d}t}\Big(\frac{1}{2}\rho|\u|^2+\frac{1}{8\pi}|\H|^2\Big)
+\frac{1}{2}\dv\left(\rho|\u|^2\u\right)+\nabla P\cdot\u \nonumber\\
&\quad  =\dv\Psi\cdot
\u+\frac{1}{4\pi}(\cu\H)\times\H\cdot\u
+\frac{1}{4\pi}\cu(\u\times\H)\cdot\H\nonumber\\
& \qquad  -\frac{\nu}{4\pi}\cu(\cu\H)\cdot\H.
\end{align}
Due to the identities
\begin{eqnarray} &&
 \dv(\H\times(\cu\H))  =|\cu\H|^2-\cu(\cu\H)\cdot\H , \nonumber \\
 && \dv((\u\times\H)\times\H)   =(\cu\H)\times\H\cdot\u+\cu(\u\times\H)\cdot\H, \label{nae}
\end{eqnarray}
one can subtract \eqref{haaz} from \eqref{had} to rewrite
the energy equation \eqref{had} in terms of the internal energy as
\begin{equation}\label{hagg}
\partial_t (\rho e)+\dv(\rho\u e)+(\dv\u)P=\frac{\nu}{4\pi}|\cu\H|^2+\Psi(\u):\nabla\u+\kappa \Delta \theta,
\end{equation}
where $\Psi(\u):\nabla\u$ denotes the scalar product of two matrices:
\begin{equation*}
\Psi(\u):\nabla\u=\sum^3_{i,j=1}\frac{\mu}{2}\left(\frac{\partial
u^i}{\partial x_j} +\frac{\partial u^j}{\partial
x_i}\right)^2+\lambda|\dv\u|^2=
2\mu|\mathbb{D}(\u)|^2+\lambda|\mbox{tr}\mathbb{D}(\u)|^2.
\end{equation*}

To establish  the low Mach number limit for the system
\eqref{haa}--\eqref{hac}, \eqref{hagg}, in this paper we shall
focus on the ionized fluids obeying the following perfect gas relations
\begin{align}
P=\mathfrak{R}\rho \theta,\quad e=c_V\theta,\label{hpg}
\end{align}
where the parameters $\mathfrak{R}>0$ and $ c_V\!>\!0$ are the gas constant and
the heat capacity at constant volume, respectively, which will be assumed to be one for simplicity
of the presentation. We also ignore  the coefficient $1/(4\pi)$ in the magnetic field.

Let $\epsilon$ be the Mach number, which is a dimensionless number. Consider
the system \eqref{haa}--\eqref{hac}, \eqref{hagg} in the physical regime:
\begin{gather*}
 P \sim P_0+O(\epsilon), \quad \u\sim O(\epsilon), \quad \H\sim
 O(\epsilon),\quad \nabla\theta\sim O(1),
\end{gather*}
where $P_0>0$ is a certain given constant which is normalized to be $P_0= 1$.
 Thus we consider the case when the pressure $P$ is a small perturbation of
 the given state $1$, while the temperature $\theta$ has a finite variation.
As in \cite{A06}, we introduce the following
transformation to ensure positivity of $P $ and $\theta$
\begin{align}
 & \  P(x,t)= e^{\epsilon p^\epsilon(x,\epsilon t)}, \quad
 \theta(x,t)=e^{\theta^\epsilon(x,\epsilon t)}, 
\label{transa}
\end{align}
where a longer time scale $t=\tau/\epsilon$ (still denote $\tau$  by
$t$ later for simplicity)  is introduced in order to seize the
evolution of the fluctuations. Note that \eqref{hpg} and
\eqref{transa} imply that $ \rho(x,t)=e^{\epsilon
p^\epsilon(x,\epsilon t)-\theta^\epsilon(x,\epsilon t)}$ since $
\mathfrak{R}\equiv c_V\equiv 1$. Set
\begin{align}\label{transb}
  {\H} (x,t)=\epsilon \H^\epsilon(x,\epsilon t), \quad
 {\u} (x,t)=\epsilon \u^\epsilon(x,\epsilon t),
  \end{align}
  and  \begin{align}\nonumber
   \mu=\epsilon \mu^\epsilon, \quad \lambda=\epsilon  \lambda^\epsilon,
   \quad \nu=\epsilon \nu^\epsilon,  \quad \kappa=\epsilon  \kappa^\epsilon.
 \end{align}

Under these changes of variables and coefficients, the system,
\eqref{haa}--\eqref{hac}, \eqref{hagg} with \eqref{hpg}, takes
 the following equivalent form:
 \begin{align}
   &\partial_t p^\epsilon +(\u^\epsilon \cdot\nabla)p^\epsilon
   +\frac{1}{\epsilon}\dv(2\u^\epsilon-\kappa^\epsilon
   e^{-\epsilon p^\epsilon+\theta^\epsilon}\nabla \theta^\epsilon)\nonumber\\
 & \quad \quad =\epsilon e^{-\epsilon p^\epsilon}
[\nu^\epsilon|\cu\H^\epsilon|^2
+\Psi(\u^\epsilon):\nabla\u^\epsilon]+\kappa^\epsilon
e^{-\epsilon p^\epsilon+\theta^\epsilon}\nabla p^\epsilon \cdot \nabla  \theta^\epsilon,
\label{hnaaa} \\
&e^{-\theta^\epsilon}[\partial_t\u^\epsilon+(\u^\epsilon\cdot\nabla)\u^\epsilon]
+\frac{\nabla p^\epsilon}{\epsilon}
  =e^{- \epsilon p^\epsilon}[(\cu \H^\epsilon)\times \H^\epsilon
  +\dv\Psi^\epsilon(\u^\epsilon)], \label{hnabb} \\
&\partial_t\H^\epsilon-\cu(\u^\epsilon\times\H^\epsilon)
-\nu^\epsilon\Delta \H^\epsilon=0,\quad
\dv\H^\epsilon=0,\label{hnacc} \\
&\partial_t \theta^\epsilon +(\u^\epsilon\cdot\nabla)
\theta^\epsilon+ \dv\u^\epsilon\nonumber\\
& \quad  \quad =\epsilon^2e^{-\epsilon p^\epsilon}[\nu^\epsilon|\cu\H^\epsilon|^2
+\Psi^\epsilon(\u^\epsilon):\nabla\u^\epsilon]+\kappa^\epsilon
e^{-\epsilon p^\epsilon}\dv (e^{\theta^\epsilon}\nabla  \theta^\epsilon),
  \label{hnadd}
\end{align}
where
$\Psi^\epsilon(\u^\epsilon)=2\mu^\epsilon \mathbb{D}(\u^\epsilon)+\lambda^\epsilon\dv\u^\epsilon\,\mathbf{I}_3$,
and the identity $\cu (\cu \H^\epsilon)=\nabla \dv\H^\epsilon-\Delta \H^\epsilon$ and the constraint
that $\dv\H^\epsilon=0$ have been used.

 We shall study the limit as $\epsilon\to 0$ of  solutions to the system
 \eqref{hnaaa}--\eqref{hnadd}.
Formally, as $\epsilon$ goes to zero, if the
sequence $(p^\epsilon, \u^\epsilon,\H^\epsilon, \theta^\epsilon)$ converges
strongly to a limit $(1,\w,\t,\vartheta)$ in some sense, and
$(\mu^\epsilon,\lambda^\epsilon,\nu^\epsilon,\kappa^\epsilon)$
converges to a constant vector
$(\bar\mu,\bar\lambda,\bar\nu,\bar\kappa)$, then taking the
limit to \eqref{hnaaa}--\eqref{hnadd}, we have
\begin{align}
&\dv(2\w -\bar{\kappa}\, e^\vartheta\nabla \vartheta)=0, \label{hna1} \\
&e^{-\vartheta}[\partial_t\w+(\w\cdot\nabla)\w]+\nabla \pi
  =(\cu \t)\times \t+\dv\Phi(\w), \label{hna2} \\
&\partial_t\t -\cu(\w\times\t)-\bar \nu\Delta\t=0,\quad
\dv\t=0,    \label{hna3} \\
&\partial_t\vartheta  +(\w\cdot\nabla)\vartheta +\dv \w=\bar \kappa  \, \dv (e^{\vartheta}\nabla  \vartheta),
 \label{hna4}
\end{align}
with some function $\pi$, where $\Phi(\w)$ is defined by
\begin{equation}\nonumber 
\Phi(\w)=2\bar \mu \mathbb{D}(\w)+\bar\lambda\dv\w \,\mathbf{I}_3.
\end{equation}

The purpose of this paper is to establish the above limit process rigorously with general initial data. For this purpose, we
supplement the system \eqref{hnaaa}--\eqref{hnadd} with the following initial conditions
\begin{align}
(p^\epsilon,\u^\epsilon,\H^\epsilon, \theta^\epsilon)|_{t=0}
=(p^\epsilon_{\rm in}(x),\u^\epsilon_{\rm in}(x),\H^\epsilon_{\rm in}(x),
\theta^\epsilon_{\rm in}(x)), \quad x \in \mathbb{R}^3. \label{hnas}
\end{align}

For simplicity of presentation, we shall assume that $\mu^\epsilon
\equiv \bar\mu>0$, $\nu^\epsilon \equiv \bar\nu>0$, $\kappa^\epsilon
\equiv \bar\kappa>0$, and $\lambda^\epsilon \equiv \bar\lambda$. The
general case $\mu^\epsilon\rightarrow \bar\mu>0$,
$\nu^\epsilon\rightarrow\bar\nu>0$,
$\kappa^\epsilon\rightarrow\bar\kappa>0$ and
$\lambda^\epsilon\rightarrow\bar\lambda$ simultaneously as $\epsilon
\rightarrow 0$ can be treated by slightly modifying the arguments presented in this paper.

As in \cite{A06}, we will use the notation
$\|v\|_{H_\eta^\sigma}:=\|v\|_{H^{\sigma-1}}+\eta\|v\|_{H^\sigma}$
for any $\sigma\in \mathbb{R}$ and $\eta\geq 0$. For each $\epsilon>0$, $t\geq 0$ and $s\geq 0$,
we will also use the following norm:
 \begin{align*}\nonumber
&\|(p^\epsilon,\u^\epsilon, \H^\epsilon,\theta^\epsilon-\bar \theta)(t)\|_{s,\epsilon} \nonumber\\
 &\qquad :=  \sup_{\tau\in [0,t]}\big\{\|(p^\epsilon,\u^\epsilon,\H^\epsilon)(\tau)\|_{H^{s}}
 +\|(\epsilon p^\epsilon,\epsilon\u^\epsilon, \epsilon\H^\epsilon,
 \theta^\epsilon-\bar \theta)(\tau)\|_{H_\epsilon^{s+2}}\big\}\nonumber\\
  & \qquad \ \ \quad  + \bigg\{\int^t_0\big[\|\nabla(p^\epsilon,\u^\epsilon,\H^\epsilon)\|^2_{H^s}
  +\|\nabla (\epsilon\u^\epsilon,\epsilon H^\epsilon,\theta^\epsilon)\|^2_{H_\epsilon^{s+2}}\big](\tau){\rm d}\tau\bigg\}^{1/2}.
 \end{align*}

Then, the main result of this paper reads as follows.
\begin{thm}\label{main}
 Let $s\geq  4$ be an integer. Assume that the initial data
$ (p^\epsilon_{\rm in},\u^\epsilon_{\rm in},\H^\epsilon_{\rm in}, \linebreak \theta^\epsilon_{\rm in})$ satisfy
\begin{align}\label{initial}
\|(p^\epsilon_{\rm in},\u^\epsilon_{\rm in},\H^\epsilon_{\rm in})\|_{H^{s}}
 +\|(\epsilon p^\epsilon_{\rm in},\epsilon\u^\epsilon_{\rm in}, \epsilon\H^\epsilon_{\rm in},
 \theta^\epsilon_{\rm in}-\bar \theta)\|_{H_\epsilon^{s+2}} \leq L_0
\end{align}
 for all $\epsilon \in (0,1]$ and two given positive constants $\bar \theta$ and $L_0$.
 Then  there exist positive constants $T_0$ and $\epsilon_0<1$, depending only on $L_0$ and $\bar \theta$, such
 that  the Cauchy problem \eqref{hnaaa}--\eqref{hnadd}, \eqref{hnas} has a unique
solution $(p^\epsilon,\u^\epsilon,\H^\epsilon, \theta^\epsilon)$ satisfying
\begin{align}\label{conda}
 \|(p^\epsilon,\u^\epsilon,\H^\epsilon, \theta^\epsilon-\bar \theta)(t)\|_{s,\epsilon}\leq L,
 \quad \forall \,\, t\in [0,T_0], \ \forall\,\epsilon \in (0,\epsilon_0],
\end{align}
where $L$ depends only on $L_0$, $\bar \theta$ and $T_0$.
Moreover, assume further that  the initial data satisfy the following conditions
 \begin{align}
& |\theta^\epsilon_0(x)-\bar{\theta} |\leq   {N}_0 |x|^{-1-\zeta}, \quad
 |\nabla \theta^\epsilon_0(x)|\leq  N_0 |x|^{-2-\zeta}, \quad \forall  \, \epsilon  \in (0,1],\label{dacay}\\
& \big(p^\epsilon_{\rm in}, \u^\epsilon_{\rm in},
\H^\epsilon_{\rm in}, \theta^\epsilon_{\rm in}-\bar \theta\big)\rightarrow
(0,\u_0,\t_0, \vartheta_0-\bar \theta)\ \ \mbox{ in }\ H^{s}(\mathbb{R}^3)  \label{conver}
\end{align}
 as $\epsilon\rightarrow 0$, where $N_0$ and $\zeta$ are fixed positive constants.
  Then the solution sequence $(p^\epsilon,\u^\epsilon,\H^\epsilon,$ $ \theta^\epsilon-\bar\theta)$ converges weakly  in
  $L^\infty(0,T_0; H^s(\R^3))$
and strongly in $L^2(0,T_0;$    $H^{s_2}_{\mathrm{loc}}(\R^3))$ for all  $0\leq s_2<s$
to the limit $(0,\w,\t, \vartheta-\bar\theta)$, where $(\w,\t, \vartheta)$
 satisfies  the system  \eqref{hna1}--\eqref{hna4} with initial
data $(\w,\t, \vartheta)|_{t=0} =(\w_0,\t_0, \vartheta_0)$, where $\w_0$ is determined by
\begin{align}\label{intlimit}
   \dv(2\w_0 -\bar{\kappa}\, e^{\vartheta_0}\nabla \vartheta_0)=0,
   \quad \cu(e^{-\vartheta_0}\w_0)= \cu(e^{-\vartheta_0}\u_0).
\end{align}
\end{thm}

We now give some comments on the proof of  Theorem \ref{main}. The
key point in the proof  is to establish the uniform estimates in Sobolev norms
for the acoustic components of solutions, which are propagated by the wave
equations whose coefficients are functions of the temperature. Our main strategy
is to bound the norm of $(\nabla p^\epsilon, \dv \u^\epsilon)$ in terms of the norms
of $(\epsilon \partial_t) (p^\epsilon,\u^\epsilon, \H^\epsilon)$ and
 $(\epsilon p^\epsilon,\epsilon \u^\epsilon,\epsilon \H^\epsilon, \theta^\epsilon )$
 through the density and the momentum equations.
This approach is motivated by the previous works on the compressible Navier-Stokes
equations due to Alazard
\cite{A06}, and  Levermore, Sun and Trivisa \cite{LST}.
It should be pointed out that the
analysis for \eqref{hnaaa}--\eqref{hnadd} is very complicated   due to the strong coupling of the hydrodynamic motion and
the magnetic fields. More efforts
should be paid on the estimates involving these coupling terms, in particular,
on the estimate of higher order spatial derivatives. We shall
exploit the special structure of the system to obtain the tamed
estimate on higher order derivatives, so that we can close our
estimates on the uniform boundedness of the solutions.
 Once the uniform boundedness of the solutions has been established,
 one can obtain the convergence result in Theorem \ref{main}
by applying the compactness arguments and the dispersive estimates
on the acoustic wave equations in the whole space developed in \cite{MS01}.

\begin{rem}
 If we take $\H=0$ in \eqref{hnaaa}--\eqref{hnadd}, the system \eqref{hnaaa}--\eqref{hnadd}
 reduces to the full compressible Navier-Stokes  equations.
 The estimates obtained in the present paper provide a simpler proof of a somewhat weaker result than
 that was proven in \cite{A06}. Specifically, by giving up
the uniformity in the viscosity coefficients and heat conductivity parameter, we can eliminate the
necessity of separating estimates on  high and low frequencies, leading thus to a simpler proof.
\end{rem}

\begin{rem}
 The positivity of the coefficients  $\mu$, $\nu$ and $\kappa$ plays a fundamental role  in the
 proof of Theorem \ref{main}. The arguments given in this paper cannot be applied to the case
 when one of them disappears. In fact, in the case of $\mu=\nu=\kappa=0$,  the terms
$(\cu{\H^\epsilon}) \times {\H^\epsilon}$ in the momentum equations and
$\cu({\u^\epsilon} \times {\H^\epsilon})$ in the magnetic field
equation change basically the structure of the system. Recently, Jiang, Ju and Li \cite{JJL4}
  have studied the incompressible limit of the compressible non-isentropic
  ideal MHD equations with general initial data in the whole space
$\mathbb{R}^d$ ($d=2,3$) when the  initial data belong to $H^s(\mathbb{R}^d)$ with $s\, (\geq 4)$
being an \emph{even} integer. We emphasize that
the restriction on the Sobolev index $s$ to be even plays a
crucial role in the proof since in this case the nonstandard highest order
derivative operators applied to the momentum equations are not intertwined
with the pressure equation, and thus we can apply the same operators to
the magnetic field equations to close the estimates on $\u$ and $\H$.
On the other hand, the proof presented in \cite{JJL4} fully exploits the structure
of the ideal MHD equations and cannot be directly extended to
the full compressible MHD equations studied in the current paper
where the heat conductivity is positive.
  \end{rem}

 We point out that the low Mach number limit is an interesting topic
 in fluid dynamics and applied mathematics. Below we briefly review
some related results on the Euler, Navier-Stokes and MHD equations.
In \cite{S86}, Schochet obtained the convergence of the non-isentropic
compressible Euler equations to the incompressible non-isentropic
Euler equations in a bounded domain for local smooth solutions and well-prepared initial data.
As mentioned above, in \cite{MS01} M\'{e}tivier and Schochet proved
rigorously the incompressible limit of the compressible non-isentropic
Euler equations in the whole space  with general initial data, see also \cite{A05, A06, LST}
for further extensions. In \cite{MS03} M\'{e}tivier and Schochet
showed the incompressible limit of the one-dimensional non-isentropic Euler equations
 in a periodic domain with general data. For compressible heat-conducting flows,
 Hagstrom and Lorenz established in \cite{HL02} the low Mach number limit
 under the assumption that the variation of the density and temperature is small.
 In the case of without heat conductivity, Kim and Lee \cite{KL05} investigated the incompressible limit
to the non-isentropic Navier-Stokes equations in a periodic domain with well-prepared data, while
Jiang and Ou \cite{JO} investigated the incompressible limit in a three-dimensional
bounded domain, also for well-prepared data. The justification of the low Mach number limit for
the non-isentropic Euler or Navier-Stokes equations with general initial
data in bounded domains or multi-dimensional periodic domains is still a challenging open problem.
We refer the interested reader to \cite{BDGL} on formal computations for viscous
polytropic gases, and to \cite{MS03,BDG} for the study on the
acoustic waves of the non-isentropic Euler equations in periodic domains.
Compared with the non-isentropic case, the description of
the propagation of oscillations in the isentropic case is simpler
and there are many articles on this topic (isentropic flows) in the literature, see, for example,
Ukai \cite{U86}, Asano \cite{As87}, Desjardins and Grenier \cite{DG99}, and Masmoudi \cite{M01} in
the whole space case; Isozaki \cite{I87,I89} in the case of exterior domains;
Iguchi \cite{Ig97} in the half space case; Schochet \cite{S94} and
Gallagher \cite{Ga01} in the case of periodic domains; and Lions and Masmoudi
\cite{LM98}, and Desjardins, et al. \cite{DGLM} in the case of bounded domains.

For the compressible isentropic MHD equations, the justification of
the low Mach number limit has been established in several aspects. In \cite{KM}
Klainerman and Majda studied the low Mach number limit to the
compressible isentropic MHD equations in the spatially periodic case with
well-prepared initial data. Li \cite{Li} considered the inviscid, incompressible limit of the viscous isentropic
compressible MHD  equations, also for well-prepared initial data,   by applying the
convergence-stability principle.
Recently, the low Mach number limit to the compressible
isentropic viscous (including both viscosity and magnetic diffusivity) MHD equations with
 general data was studied in \cite{HW3,JJL1,JJL2}. In
\cite{HW3} Hu and Wang obtained the convergence of weak solutions
to the compressible viscous MHD equations in bounded domains,
periodic domains and the whole space. In \cite{JJL1} Jiang, Ju and Li
employed the modulated energy method to verify the
limit of weak solutions of the compressible MHD equations in the
torus to the strong solution of the incompressible viscous or
partially viscous MHD equations (zero shear viscosity but with magnetic diffusion),
while in \cite{JJL2} the convergence of weak
solutions of the viscous compressible MHD equations to the strong
solution of the ideal incompressible MHD equations in the whole
space $\mathbb{R}^d (d=2,3)$ was established by using the dispersion property of the wave equation, as both
shear viscosity and magnetic diffusion coefficients go to zero.
For the full compressible MHD equations, the incompressible limit
 in the framework of the so-called variational solutions was established in \cite{K10,KT,NRT}.
Fan, Gao and Guo \cite{FGG} studied the low Mach number limit of the non-isentropic MHD equations
with zero thermal conductivity under the assumption
that the initial data are uniformly bounded with respect to the Mach number in $H^3(\mathbb{R}^3)$  and are well-prepared
in $H^1(\mathbb{R}^3)$. Recently, the low Mach number limit for the
full compressible MHD equations with small
entropy or temperature variation was justified rigorously in \cite{JJL3}.

 It should be remarked that only the case of the well-prepared initial
data has been treated in \cite{FGG, JJL3, Li} for the low Mach number limit under the framework of smooth
solutions with no oscillations. The uniform estimates presented in \cite{FGG, JJL3, Li} are mainly based on the standard theory of symmetrizable
  hyperbolic-parabolic equations.
Here we consider the low Mach number limit to the system \eqref{hnaaa}--\eqref{hnadd}
with large temperature variations and general (ill-prepared) initial data.
In this case two times scales appear     and the  theory of symmetrizable hyperbolic-parabolic equations
cannot be applied directly to obtain the uniform estimates. Moreover, we must consider the  acoustic wave caused by the
general initial data.
 Comparing the arguments on the magnetic field in \cite{FGG, JJL3,Li} with those of the present paper,
  we have to pay more attention to deal with the coupled terms between acoustic part of the fluid velocity and magnetic field.
 To overcome these additional difficulties, the main idea used here is to bound the acoustic part of the fluid velocity in terms of the norms
of $(\epsilon \partial_t) (p^\epsilon,\u^\epsilon, \H^\epsilon)$ and
$(\epsilon p^\epsilon,\epsilon \u^\epsilon,\epsilon \H^\epsilon, \theta^\epsilon )$
 through the density and the momentum equations and to exploit the structure of the system.
 In addition, the commutator estimates (see Lemmas \ref{Lb} and \ref{Lc} below)
  are used very carefully in the process of uniform estimates.

Besides the references mentioned above, we refer the interested reader
to the monograph \cite{FN} and the survey papers
 \cite{Da05,M07,S07} for more related results on the low Mach number limit for fluid models.

We also mention that there are a lot of articles in the literature on the other topics related
to the compressible MHD equations due to theirs physical importance, complexity, rich
phenomena, and mathematical challenges,
see, for example, \cite{BT02,CW02,CW03,FS,Go,LY11,DF06,FJN,HT,LL,HW1,HW2,ZJX} and the
references cited therein.

This paper is arranged as follows. In Section 2, we describe some notations, recall basic facts and
 present commutator estimates. In Section 3 we first establish a priori estimates
 on $(\H^\epsilon,\theta^\epsilon)$,
 $(\epsilon p^\epsilon, \epsilon \u^\epsilon, \epsilon \H^\epsilon, \theta^\epsilon)$ and
on $(p^\epsilon, \u^\epsilon)$. Then, with the help of these estimates we establish
the uniform boundedness of the solutions and prove the existence part of Theorem \ref{main}.
 Finally, in Section 4 we study the local energy decay for the acoustic wave equations
 and prove the convergence part of Theorem \ref{main}.

\section{Preliminary}

 In this section, we  give some notations and recall some basic facts
which will be frequently used throughout the paper. We also present
some commutators estimates introduced in \cite{KM,LST} and state the results
on local solutions to the Cauchy problem \eqref{hnaaa}--\eqref{hnadd}, \eqref{hnas}.

 We denote by $\langle\cdot,\cdot\rangle$ the standard inner
product in $L^2(\R^3)$ with norm $\langle f,f\rangle=\|f\|^2_{L^2}$ and by
 $H^k$ the standard Sobolev space $W^{k,2}$ with norm $\|\cdot\|_{H^k}$.
 The notation $\|(A_1,\dots, A_k)\|_{L^2}$ means
the summation of $\|A_i\|_{L^2},i=1,\cdots,k$, and it also applies to other norms.
For a multi-index $\alpha = (\alpha_1,\alpha_2,\alpha_3)$, we denote
$\partial^\alpha =\partial^{\alpha_1}_{x_1}\partial^{\alpha_2}_{x_2}\partial^{\alpha_3}_{x_3}$
and $|\alpha|=|\alpha_1|+|\alpha_2|+|\alpha_3|$. We will omit the spatial
domain $\R^3$ in integrals for convenience. We use $l_i>0$ ($i\in\mathbb{N}$) to denote given
constants. We also use the symbol $K$ or $C_0$ to denote generic positive constants,
and $C(\cdot)$ to denote a smooth function which may vary from line to line.

  For a scalar function $f$, vector functions $\mathbf{a}$ and $\mathbf{b}$, we have
  the following basic vector identities:
\begin{align}
 & \dv(f \mathbf{a}) = f  \dv \mathbf{a} +\nabla f\cdot   \mathbf{a}, \label{vbd}\\
& \cu(f \mathbf{a}) =f\cdot \cu \mathbf{a} -\nabla f\times   \mathbf{a}, \label{vbc}\\
&\dv (\mathbf{a}\times \mathbf{b}) =\mathbf{b}\cdot\cu \mathbf{a} -\mathbf{a}\cdot\cu\mathbf{b},\label{va}\\
&\cu(\mathbf{a}\times\mathbf{b}) =(\mathbf{b}\cdot\nabla)\mathbf{a} - (\mathbf{a}\cdot\nabla)\mathbf{b}
  + \mathbf{a} (\dv \mathbf{b})  -   \mathbf{b} (\dv \mathbf{a}),\label{vb}\\
&\nabla (\mathbf{a}\cdot \mathbf{b})
=(\mathbf{a}\cdot \nabla )\mathbf{b}+(\mathbf{b}\cdot \nabla)\mathbf{a}
+\mathbf{a}\times (\cu \mathbf{b})+\mathbf{b}\times (\cu \mathbf{a}).  \label{vv}
 \end{align}

%
Below we recall some results on commutator estimates.
\begin{lem}[\cite{KM}]\label{Lb}
Let $k>5/2$ be an integer and $\alpha=(\alpha_1,\alpha_2,\alpha_3)$ be a multi-index such
that $|\alpha|=k$. Then, for any $\sigma\geq 0$, there exists a positive constant $C_0$,
such that for all $f,g\in H^{k+\sigma}(\R^3)$,
\begin{align}\label{comc}
    \|[f,\partial^\alpha] g\|_{H^{\sigma}}\leq\, & C_0(\|f\|_{W^{1,\infty}}\| g\|_{H^{\sigma+k-1}}
    +\| f\|_{H^{\sigma+k}}\|g\|_{L^{\infty}}).
\end{align}
\end{lem}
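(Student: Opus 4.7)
The plan is to prove the estimate by Leibniz expansion followed by Gagliardo--Nirenberg interpolation, which is the standard route for Kato--Ponce/Moser type commutator estimates. The case $k=0$ is vacuous (the commutator is zero), so I assume $k\geq 1$.

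First I would expand the commutator via the Leibniz rule:
\begin{equation*}
[f,\partial^\alpha] g = f\partial^\alpha g - \partial^\alpha(fg) = -\sum_{0<\beta\leq\alpha}\binom{\alpha}{\beta}\partial^\beta f\,\partial^{\alpha-\beta}g.
\end{equation*}
Hence $\|[f,\partial^\alpha]g\|_{H^\sigma}$ is controlled by a finite sum of terms $\|\partial^\beta f\,\partial^{\alpha-\beta} g\|_{H^\sigma}$ with $1\leq|\beta|\leq k$. Applying the Leibniz rule a second time to each $\partial^\gamma$ with $|\gamma|\leq\sigma$, the problem reduces to bounding in $L^2$ a finite number of products of the form $\partial^{\beta'}f\cdot\partial^{\gamma'}g$ with multi-indices satisfying $|\beta'|\geq 1$, $|\beta'|+|\gamma'|\leq k+\sigma$, and in particular $|\gamma'|\leq k+\sigma-1$.

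Next I would estimate each such product by H\"older's inequality with a pair of exponents $1/p+1/q=1/2$ and then use Gagliardo--Nirenberg interpolation on $\mathbb{R}^3$. When $|\beta'|=1$ one takes $p=\infty$, $q=2$, producing directly the factor $\|f\|_{W^{1,\infty}}\|g\|_{H^{k+\sigma-1}}$. When $|\beta'|\geq 2$ (so $|\gamma'|\leq k+\sigma-2$), one writes
\begin{equation*}
\|\partial^{\beta'}f\|_{L^p}\leq C\|\nabla f\|_{L^\infty}^{1-\theta}\|f\|_{H^{k+\sigma}}^{\theta},\qquad \|\partial^{\gamma'}g\|_{L^q}\leq C\|g\|_{L^\infty}^{1-\theta'}\|g\|_{H^{k+\sigma-1}}^{\theta'},
\end{equation*}
and chooses $\theta,\theta'\in[0,1]$ together with $p,q$ so that the scaling identities of Gagliardo--Nirenberg close and $\theta+\theta'=1$. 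Young's inequality $a^\theta b^{1-\theta}c^{\theta'}d^{1-\theta'}\leq a^\theta c^{1-\theta}+b^{1-\theta'}d^{\theta'}$ (applied with the grouping that matches the two admissible factors) then absorbs the mixed product into the asymmetric right-hand side of \eqref{comc}.

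Collecting all such products and using the triangle inequality yields \eqref{comc}. The main obstacle is not analytical but combinatorial: one has to check that for every admissible pair $(|\beta'|,|\gamma'|)$ with $1\leq|\beta'|\leq k+\sigma$ and $|\gamma'|\leq k+\sigma-1$ there exist valid Gagliardo--Nirenberg exponents $p,q,\theta,\theta'$ in the admissible range so that the interpolation actually succeeds in three dimensions. This is a routine but slightly tedious case analysis on the boundary cases $|\beta'|\in\{1,2,k+\sigma\}$ and $|\gamma'|\in\{0,1\}$, which exhausts all the Sobolev embedding failures; the interior cases follow from the generic form of the inequality.
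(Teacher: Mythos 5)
The paper gives no proof of Lemma \ref{Lb} to compare against: the estimate is simply recalled as a known Moser--Klainerman--Majda type commutator inequality (cf.\ \cite{LST,KM}). Your route --- Leibniz expansion of $[f,\partial^\alpha]g$, H\"older, Gagliardo--Nirenberg interpolation, Young's inequality --- is exactly the classical proof, and it is essentially sound. One genuine limitation should be stated, though: your argument only covers \emph{integer} $\sigma$, since the second Leibniz expansion over $\partial^\gamma$, $|\gamma|\leq\sigma$, has no meaning for fractional $\sigma$, whereas the lemma is asserted for every $\sigma\geq 0$. The non-integer case requires different tools (Littlewood--Paley/Kato--Ponce commutator estimates) and does not follow by naive interpolation in $g$, because the right-hand side mixes $\|g\|_{H^{\sigma+k-1}}$ with $\|g\|_{L^\infty}$. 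Since the paper invokes \eqref{comc} only with $\sigma=0$, this restriction is harmless for the applications, but it should be acknowledged.

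Two corrections to the analytic core. First, you cannot demand both exact Gagliardo--Nirenberg scaling against the top norms $\|f\|_{H^{k+\sigma}}$, $\|g\|_{H^{k+\sigma-1}}$ \emph{and} $\theta+\theta'=1$: adding the two scaling identities forces $|\beta'|+|\gamma'|=k+\sigma$, which fails for the lower-order terms produced by the expansion; instead, close the scaling at the actual total order and then enlarge the (inhomogeneous) norms. Second, the case analysis you anticipate is unnecessary. Write $\partial^{\beta'}f=\partial^{\beta'-e}(\nabla f)$ for a unit multi-index $e\leq\beta'$, set $i=|\beta'|\geq1$, $j=|\gamma'|$, $m=i+j\leq k+\sigma$, and (for $m\geq 2$; the case $m=1$ is immediate) use
\begin{equation*}
\|\partial^{\beta'}f\|_{L^{p}}\leq C\|\nabla f\|_{L^{\infty}}^{1-\theta}\|f\|_{H^{m}}^{\theta},\qquad
\|\partial^{\gamma'}g\|_{L^{q}}\leq C\|g\|_{L^{\infty}}^{1-\theta'}\|g\|_{H^{m-1}}^{\theta'},
\end{equation*}
with $\theta=\frac{i-1}{m-1}$, $\theta'=\frac{j}{m-1}$, $p=\frac{2(m-1)}{i-1}$, $q=\frac{2(m-1)}{j}$ (interpreted as $\infty$ when the denominator vanishes). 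These are admissible Gagliardo--Nirenberg exponents in every space dimension, with $\frac1p+\frac1q=\frac12$ and $\theta+\theta'=1$; H\"older and Young (pairing $\|f\|_{H^{m}}$ with $\|g\|_{L^\infty}$ and $\|\nabla f\|_{L^\infty}$ with $\|g\|_{H^{m-1}}$, i.e.\ $(ab)^{\theta}(cd)^{1-\theta}\leq \theta ab+(1-\theta)cd$, which is what your displayed Young inequality should read) then give $\|\nabla f\|_{L^\infty}\|g\|_{H^{m-1}}+\|f\|_{H^{m}}\|g\|_{L^\infty}$, dominated by the right-hand side of \eqref{comc} since $m\leq k+\sigma$. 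No boundary-case discussion of Sobolev embeddings is needed.
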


\begin{lem}[\cite{LST}]\label{Lc}
Let $s\geq 4$ be an integer. Then there exists a positive constant $C_0$, such that
 for all $\epsilon\in (0,1]$, $T>0$ and multi-index $\beta=(\beta_1,\beta_2,\beta_3)$
 satisfying $0\leq |\beta|\leq s-1$, and any $f,g\in C^\infty([0,T],H^{s}(\R^3))$, it holds that
\begin{align}\label{comd}
    \|[f,\partial^\beta(\epsilon\partial_t)] g\|_{L^2}\leq\, & \epsilon C_0
(\|f\|_{H^{s-1}}\|\partial_t g\|_{H^{s-2}} +\|\partial_t f\|_{H^{s-1}}\| g\|_{H^{s-1}}).
\end{align}
\end{lem}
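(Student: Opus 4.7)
The plan is to reduce this mixed space-time commutator to two pieces that can be handled separately: a purely spatial commutator, to which Lemma \ref{Lb} applies directly, and a standard Moser product term that only involves positive-order Sobolev products. This reduction is simply the product rule in the $t$ variable.

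First I would write, using that spatial and time derivatives commute,
\begin{align*}
[f,\partial^\beta(\epsilon\partial_t)]g
&=\epsilon f\partial^\beta\partial_t g-\epsilon\partial^\beta\partial_t(fg)\\
&=\epsilon\bigl(f\partial^\beta(\partial_t g)-\partial^\beta(f\,\partial_t g)\bigr)-\epsilon\,\partial^\beta(\partial_t f\cdot g)\\
&=\epsilon\,[f,\partial^\beta]\partial_t g-\epsilon\,\partial^\beta(\partial_t f\cdot g).
\end{align*}
For the first piece I would apply Lemma \ref{Lb} with $\alpha=\beta$, $\sigma=0$, $k=|\beta|\le s-1$, and with $g$ there replaced by $\partial_t g$, obtaining
$$\|[f,\partial^\beta]\partial_t g\|_{L^2}\le C_0\bigl(\|f\|_{W^{1,\infty}}\|\partial_t g\|_{H^{|\beta|-1}}+\|f\|_{H^{|\beta|}}\|\partial_t g\|_{L^\infty}\bigr).$$
Since $s>5/2$ (so that $s-1>3/2$), the Sobolev embeddings $H^{s-1}(\R^3)\hookrightarrow W^{1,\infty}(\R^3)$ and $H^{s-2}(\R^3)\hookrightarrow L^\infty(\R^3)$, together with the monotonicity $\|\cdot\|_{H^{|\beta|}}\le \|\cdot\|_{H^{s-1}}$ and $\|\cdot\|_{H^{|\beta|-1}}\le \|\cdot\|_{H^{s-2}}$ coming from $|\beta|\le s-1$, collapse the right-hand side into $C_0\|f\|_{H^{s-1}}\|\partial_t g\|_{H^{s-2}}$, which is the first admissible term in the target estimate.

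For the second piece, I would exploit the algebra property of $H^{s-1}(\R^3)$ for $s-1>3/2$, together with $|\beta|\le s-1$:
$$\|\partial^\beta(\partial_t f\cdot g)\|_{L^2}\le\|\partial_t f\cdot g\|_{H^{|\beta|}}\le\|\partial_t f\cdot g\|_{H^{s-1}}\le C_0\|\partial_t f\|_{H^{s-1}}\|g\|_{H^{s-1}},$$
which produces the second term in the claimed bound. Adding the two pieces and factoring the $\epsilon$ in front finishes the argument.

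The delicate point is ensuring the Sobolev embeddings used in the first piece are valid. Naively $H^{s-1}\hookrightarrow W^{1,\infty}$ and $H^{s-2}\hookrightarrow L^\infty$ need $s>7/2$, whereas the lemma asserts $s>5/2$; to cover the borderline range one cannot apply Lemma \ref{Lb} as a single black box. The expected remedy is to expand $[f,\partial^\beta]\partial_t g$ via the Leibniz rule as $\sum_{0<\gamma\le\beta}\binom{\beta}{\gamma}\partial^\gamma f\cdot\partial^{\beta-\gamma}\partial_t g$ and to estimate each product by a Gagliardo-Nirenberg or Sobolev product inequality of the form $\|uv\|_{L^2}\le C\|u\|_{H^{a}}\|v\|_{H^{b}}$ with $a+b>3/2$, choosing $a=s-1-|\gamma|$ and $b=s-2-|\beta-\gamma|$ and verifying $a+b\ge s-3/2>1>3/2$ in each summand (the worst case being $|\gamma|=|\beta|=s-1$). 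In the regime $s\ge 4$ used for Theorem \ref{main} this refinement is unnecessary and the straightforward application of Lemma \ref{Lb} and the algebra property of $H^{s-1}$ suffice.
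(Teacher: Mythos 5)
The paper does not actually prove Lemma \ref{Lc}: it is recalled from \cite{LST} without proof, so your argument has to stand on its own. Your reduction $[f,\partial^\beta(\epsilon\partial_t)]g=\epsilon[f,\partial^\beta]\partial_t g-\epsilon\,\partial^\beta(\partial_t f\cdot g)$ is correct and is the natural route. The second term is handled correctly by the algebra property of $H^{s-1}(\R^3)$ for every $s>5/2$, and the first term is handled correctly by Lemma \ref{Lb} whenever $H^{s-1}\hookrightarrow W^{1,\infty}$ and $H^{s-2}\hookrightarrow L^\infty$, i.e. for $s>7/2$. In particular your proof is complete in the regime $s\ge 4$ in which the lemma is actually invoked for Theorem \ref{main}.

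The gap is your claim to cover the full stated range $s>5/2$. The assertion that $s-1>3/2$ yields $H^{s-1}(\R^3)\hookrightarrow W^{1,\infty}(\R^3)$ and $H^{s-2}(\R^3)\hookrightarrow L^\infty(\R^3)$ is false; it yields only $H^{s-1}\hookrightarrow L^\infty$. You acknowledge this in the final paragraph, but the patch you sketch does not close. With $a=s-1-|\gamma|$ and $b=s-2-|\beta-\gamma|$ one has $a+b=2s-3-|\beta|\ge s-2$, not $s-3/2$, and the chain ``$s-3/2>1>3/2$'' is not a valid inequality in any reading. In the worst summand ($\gamma=\beta$ with $|\beta|=s-1$) the product is $\partial^\beta f\cdot\partial_t g$ with $\partial^\beta f$ controlled only in $L^2$ and $\partial_t g$ permitted to be measured only in $H^{s-2}$, and $L^2\cdot H^{s-2}\subset L^2$ again requires $s-2>3/2$. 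So for $5/2<s\le 7/2$ your argument does not establish \eqref{comd} (and the estimate as stated looks doubtful there, which suggests the hypothesis $s>5/2$ is quoted loosely from \cite{LST}). Either state and prove the lemma for $s>7/2$, which costs nothing for this paper, or supply a genuinely different mechanism for the borderline range; the Leibniz expansion with the derivative distribution you chose will not do it.
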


Since the system \eqref{haa}--\eqref{hac}, \eqref{hagg},  \eqref{hpg}
 is hyperbolic-parabolic, thus the classical result of Vol'pert
and Hudiaev \cite{VK} implies that
\begin{prop}\label{hPa}
Let $s\geq 4$  be an integer. Assume that the initial data $(\rho_0,\u_0 ,\H_0,\linebreak \theta_0)$ satisfy
\begin{align*}
   \|(\rho_0 -\underline\rho, \u_0,\H_0,\theta_0 -\underline\theta)\|_{H^s}\leq C_0
\end{align*}
for some positive constants $\underline\rho$, $\underline\theta$ and $C_0$.
Then there exists a $\tilde T>0$, such that
 the system \eqref{haa}--\eqref{hac}, \eqref{hagg},  \eqref{hpg}
 with these initial data has a unique classical solution $(\rho,\u,\H,\theta)$ enjoying
$\rho-\underline\rho\in C([0,\tilde T],H^s(\R^3))$,
$(\u,\H,\theta-\underline \theta)\in C([0,\tilde T], \linebreak H^s(\R^3))\cap L^2(0,\tilde{T};H^{s+1}(\R^3))$, and
\begin{align*}
& \sup_{0\leq t\leq\tilde  T}\|(\rho-\underline\rho,\u,\H,\theta-\underline\theta)\|^2_{H^s}\nonumber\\
 & \quad \quad+ \int^{\tilde T}_0
 \big\{\mu \|\mathbb{D}(\u)\|^2_{H^{s}}+\lambda\|\dv \u\|^2_{H^{s}}   
+\nu \|\nabla \H\|^2_{H^{s}}+\kappa \|\nabla \theta\|^2_{H^{s}}
\big\}(\tau){\rm d}\tau\leq 4C_0^2.
\end{align*}
\end{prop}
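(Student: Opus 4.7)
The plan is to prove Proposition \ref{hPa} by casting the system into a symmetric hyperbolic--parabolic form, constructing approximate solutions via linearization, and closing uniform $H^s$ energy estimates that allow passage to the limit. First I would introduce the primitive variables $(\rho,\u,\H,\theta)$ and, using \eqref{hpg}, rewrite \eqref{haa}--\eqref{hac}, \eqref{hagg} as the system
\begin{align*}
&\partial_t \rho + \u\cdot\nabla\rho + \rho\,\dv\u = 0,\\
&\rho(\partial_t\u + \u\cdot\nabla\u) + \nabla(\rho\theta) = (\cu\H)\times\H + \dv\Psi(\u),\\
&\partial_t\H - \cu(\u\times\H) = \nu\Delta\H,\qquad \dv\H=0,\\
&\rho(\partial_t\theta + \u\cdot\nabla\theta) + \rho\theta\,\dv\u = \nu|\cu\H|^2 + \Psi(\u){:}\nabla\u + \kappa\Delta\theta,
\end{align*}
where the identity $\cu(\cu\H)=-\Delta\H$ has been used together with $\dv\H=0$. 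After dividing the momentum and temperature equations by $\rho$ (which is legitimate once $\rho$ is known to be bounded away from zero), the principal part of $(\rho,\u,\theta)$ becomes a symmetric hyperbolic operator with parabolic regularization in $(\u,\theta)$ coming from $\mu\Delta\u+(\mu+\lambda)\nabla\dv\u$ and $\kappa\Delta\theta$, while the $\H$--block is strictly parabolic. This is the Vol'pert--Hudjaev framework \cite{VK}.

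Next I would set up a standard iteration: define $(\rho^{n+1},\u^{n+1},\H^{n+1},\theta^{n+1})$ as the solution of the linear problem obtained by freezing all coefficients and the lower--order semilinear right--hand sides at $(\rho^n,\u^n,\H^n,\theta^n)$, with $(\rho^0,\u^0,\H^0,\theta^0)\equiv(\underline\rho,0,0,\underline\theta)$. At each step the linear system decouples into (i) a transport equation for $\rho^{n+1}$, solvable by characteristics and yielding $\rho^{n+1}-\underline\rho\in C([0,T],H^s)$ with lower bound preserved on a short time interval, and (ii) three linear parabolic systems for $\u^{n+1}$, $\H^{n+1}$ and $\theta^{n+1}$, solvable by the Faedo--Galerkin or semigroup method in $C([0,T],H^s)\cap L^2(0,T;H^{s+1})$. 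The constraint $\dv\H^{n+1}=0$ is preserved because $\partial_t(\dv\H^{n+1})=\nu\Delta(\dv\H^{n+1})$ with zero initial data, so it persists for all $t$.

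Then I would derive uniform $H^s$ estimates for the iterates. Apply $\partial^\alpha$ for $|\alpha|\le s$ to each equation, take the $L^2$ inner product against the corresponding component (with the symmetrizer $\mathrm{diag}(\theta^n/\rho^n,\rho^n,1,\rho^n/\theta^n)$ for the hyperbolic block), and use the Moser--type commutator bound \eqref{comc} together with Gagliardo--Nirenberg interpolation to control every commutator and nonlinear term by $C_0(1+\|(\rho^n{-}\underline\rho,\u^n,\H^n,\theta^n{-}\underline\theta)\|_{H^s})^{N}$. The dissipative contributions
\[
\mu\|\mathbb{D}(\u^{n+1})\|_{H^s}^2+\lambda\|\dv\u^{n+1}\|_{H^s}^2+\nu\|\nabla\H^{n+1}\|_{H^s}^2+\kappa\|\nabla\theta^{n+1}\|_{H^s}^2
\]
appear with the correct sign on the left, yielding an inequality
\[
\tfrac{d}{dt}\mathcal{E}^{n+1}+\mathcal{D}^{n+1}\le C\big(1+\mathcal{E}^n\big)^N\big(1+\mathcal{E}^{n+1}\big),
\]
and a Gronwall argument on a time interval $\tilde T=\tilde T(C_0)>0$ produces the uniform bound $\mathcal{E}^{n+1}+\int_0^{\tilde T}\mathcal{D}^{n+1}\le 4C_0^2$.

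Finally I would establish contraction in a lower norm: the differences $(\delta\rho,\delta\u,\delta\H,\delta\theta)^{n}:=(\rho^{n+1}-\rho^n,\ldots)$ satisfy a linear system of the same type with right--hand sides that are quadratic in differences multiplied by $H^s$--bounded factors; an $L^2\cap L^2H^1$ estimate shows the iteration is contractive in $C([0,\tilde T\,'],L^2)\cap L^2(0,\tilde T\,';H^1)$ on a possibly smaller $\tilde T\,'$. Passing to the limit and using weak--$*$ compactness and the uniform $H^s$ bound yields a solution with the stated regularity; uniqueness follows from the same contraction argument. The main obstacle will be controlling the highest--order commutators produced by the magnetic coupling terms $(\cu\H)\times\H$ and $|\cu\H|^2$, since the natural energy estimate places $\|\nabla\H\|_{H^s}$ on the dissipative side while the nonlinearities demand $\|\H\|_{H^{s+1}}$; this is handled by exploiting the algebraic structure of \eqref{nae} so that, after integration by parts, the dangerous term is absorbed into $\nu\|\nabla\H\|_{H^s}^2$ via Cauchy's inequality with a small parameter.
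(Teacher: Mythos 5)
Your proposal is correct and coincides with the route the paper takes: the paper does not prove Proposition \ref{hPa} in detail but simply invokes the classical theory of Vol'pert and Hudiaev \cite{VK} for composite hyperbolic--parabolic systems, and your scheme (symmetrization of the convective $(\rho,\u,\theta)$ block, linearized iteration, uniform $H^s$ energy estimates in which the viscous, magnetic and thermal dissipation absorb the top-order nonlinear terms, contraction in a lower norm) is exactly the standard argument underlying that citation. One cosmetic remark: the identity \eqref{nae} is not really needed at this stage, since Cauchy's inequality with a small parameter already absorbs the terms demanding $\|\H\|_{H^{s+1}}$ into $\nu\|\nabla\H\|^2_{H^s}$, as you indicate.
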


It follows from Proposition \ref{hPa}, and the transforms \eqref{transa} and \eqref{transb}
that there exists a $T_\epsilon>0$, depending on $\epsilon$ and $L_0$, such that
for each fixed $\epsilon$ and any initial data \eqref{hnas} satisfying \eqref{initial},
the Cauchy problem  \eqref{hnaaa}--\eqref{hnadd}, \eqref{hnas} has a unique
solution $(p^\epsilon,\u^\epsilon,\H^\epsilon,\theta^\epsilon-\bar \theta)$ satisfying
$(p^\epsilon,\u^\epsilon,\H^\epsilon,\theta^\epsilon-\bar\theta)\in C([0,T_\epsilon),H^{s}(\R^3))$
and $(\u^\epsilon,\H^\epsilon, \theta^\epsilon-\bar\theta)\in L^2(0,T_\epsilon;H^{s+1}(\R^3))$.
Moreover, let $T_\epsilon^*$ be the maximal time of existence of such a smooth solution,
then if $T_\epsilon^*$ is finite, one has
\begin{align*}
{\underset{t\rightarrow T_\epsilon^*} {\lim \sup}}\,
  \left\{ \|(p^\epsilon,\u^\epsilon,\H^\epsilon,\theta^\epsilon-\bar \theta)(t)\|_{W^{1,\infty}}
  +\|(\u^\epsilon,\H^\epsilon,\theta^\epsilon-\bar \theta)(t)\|_{W^{2,\infty}}\right\}=\infty.
\end{align*}

Therefore, we shall see by the same argument as in \cite{MS01}
that the existence part of Theorem \ref{main}
is a consequence of the above assertion and the following key a priori estimates which
will be shown in the next section.
\begin{prop}\label{hPb}
 For any given integer $s\geq 4$ and fixed $\epsilon>0$, let $(p^\epsilon,\u^\epsilon,\H^\epsilon, \theta^\epsilon)$
be the classical solution to the Cauchy problem \eqref{hnaaa}--\eqref{hnadd}, \eqref{hnas}. Denote
\begin{align}
 & \mathcal{O}(T):=\|(p^\epsilon,\u^\epsilon,\H^\epsilon,\theta^\epsilon-\bar \theta)(T)\|_{s,\epsilon}, \nonumber\\
  & \mathcal{O}_0: =\|(p^\epsilon_{\rm in},\u^\epsilon_{\rm in},\H^\epsilon_{\rm in})\|_{H^{s}}+  \|(\epsilon p^\epsilon_{\rm in},\epsilon\u^\epsilon_{\rm in},
\epsilon\H^\epsilon_{\rm in}, \theta^\epsilon_{\rm in}-\bar \theta)\|_{H_\epsilon^{s+2}}.\nonumber
\end{align}
Then there exist positive constants $\hat{T}_0$ and $\epsilon_0<1$, and an increasing
positive function $C(\cdot)$, such that for all $T\in [0,\hat T_0]$ and $\epsilon \in (0,\epsilon_0]$,
 \begin{align*}
 \mathcal{O}(T) \leq C(\mathcal{O}_0)\exp\big\{(\sqrt{T}+\epsilon)C(\mathcal{O}(T))\big\}.
 \end{align*}
 \end{prop}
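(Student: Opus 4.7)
The plan is to run a coupled multi-scale energy bookkeeping at the scales dictated by the norm $\|\cdot\|_{s,\epsilon}$, and then recover the acoustic dissipation algebraically from the equations. First, treat $(\H^\epsilon,\theta^\epsilon)$ as a parabolic block: equation \eqref{hnacc} is linearly parabolic and \eqref{hnadd} is quasi-parabolic with uniformly positive diffusion coefficient $\bar\kappa e^{\theta^\epsilon}$ by virtue of the a priori bound on $\theta^\epsilon$. Standard $H^s$ energy estimates, combined with Lemma~\ref{Lb} to control the commutators generated by $\cu(\u^\epsilon\times\H^\epsilon)$ and $(\u^\epsilon\cdot\nabla)\theta^\epsilon$, yield
\[
\sup_{[0,T]}\|(\H^\epsilon,\theta^\epsilon-\bar\theta)\|_{H^s}^2+\int_0^T\|\nabla(\H^\epsilon,\theta^\epsilon)\|_{H^s}^2\,d\tau\leq C(\mathcal{O}_0)+T\,C(\mathcal{O}(T)).
\]
The $\epsilon^2$-prefactored source terms in \eqref{hnadd}, such as $\nu^\epsilon|\cu\H^\epsilon|^2$ and $\Psi^\epsilon(\u^\epsilon):\nabla\u^\epsilon$, are harmless because $\|\epsilon\H^\epsilon\|_{H^{s+2}_\epsilon}$ and $\|\epsilon\u^\epsilon\|_{H^{s+2}_\epsilon}$ will be controlled in the next step.

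Next, I would derive the $\epsilon$-weighted high-order bound on $(\epsilon p^\epsilon,\epsilon\u^\epsilon,\epsilon\H^\epsilon,\theta^\epsilon-\bar\theta)$ in $H^{s+2}_\epsilon$. Applying $\partial^\alpha$, $|\alpha|\leq s+2$, to the $\epsilon$-multiplied system \eqref{hnaaa}--\eqref{hnacc} and testing against the symmetrized energy $\tfrac{1}{2}|\epsilon p^\epsilon|^2+e^{-\theta^\epsilon}|\epsilon\u^\epsilon|^2+|\epsilon\H^\epsilon|^2$, the singular $O(1/\epsilon)$ couplings become $\dv(\epsilon\u^\epsilon)$ and $\nabla(\epsilon p^\epsilon)$, whose cross contributions cancel by integration by parts modulo commutators bounded through Lemma~\ref{Lb}. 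Combined with the dissipations afforded by $\bar\mu,\bar\nu,\bar\kappa>0$, this yields the full $H^{s+2}_\epsilon$ part of $\mathcal{O}(T)$.

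The third and most delicate step is the $H^s$ estimate on $(p^\epsilon,\u^\epsilon,\H^\epsilon)$ itself, \emph{without} any $\epsilon$ weight. Applying $\partial^\alpha$, $|\alpha|\leq s$, to the three equations \eqref{hnaaa}--\eqref{hnacc} and testing against the same symmetrizer, the $1/\epsilon$ terms cancel again after integration by parts. The obstruction is the highest-order contribution of the Lorentz force $\partial^\alpha((\cu\H^\epsilon)\times\H^\epsilon)\cdot\partial^\alpha\u^\epsilon$: when all derivatives fall on $\cu\H^\epsilon$, one apparently needs $\|\H^\epsilon\|_{H^{s+1}}$, which exceeds the $H^s$ control available at this level. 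The remedy is to pair this term with the top-order contribution of $\cu(\u^\epsilon\times\H^\epsilon)$ in the magnetic energy identity for \eqref{hnacc}; by the vector identity \eqref{nae}, these two pieces combine into a full divergence whose integral vanishes, and the remaining borderline commutators can be absorbed by the parabolic dissipation $\bar\nu\|\nabla\partial^\alpha\H^\epsilon\|_{L^2}^2$ arising from \eqref{hnacc}. This is precisely the tamed estimate highlighted in the introduction and constitutes the principal obstacle in the entire proof.

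Finally, to recover the dissipation component of $\mathcal{O}(T)$ involving $\int_0^T\|\nabla(p^\epsilon,\u^\epsilon)\|_{H^s}^2\,d\tau$, one reads the equations algebraically: \eqref{hnabb} expresses $\nabla p^\epsilon$ as $\epsilon$ times a combination of $\partial_t\u^\epsilon$, the transport term, the viscous stress, and the Lorentz force, and \eqref{hnaaa} does the analogous thing for $\dv\u^\epsilon$. Each right-hand side is controlled in $H^s$ by $\|\epsilon\partial_t\u^\epsilon\|_{H^s}$, $\|\epsilon\u^\epsilon\|_{H^{s+2}}$, and lower-order quantities, all bounded by $\mathcal{O}(T)$ through Step~2 and the time-commutator estimate in Lemma~\ref{Lc}. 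Assembling Steps~1--4 and applying Gronwall produces
\[
\mathcal{O}(T)^2\leq C(\mathcal{O}_0)^2+(\sqrt{T}+\epsilon)\,C(\mathcal{O}(T))^2,
\]
which on a short time interval $[0,\hat T_0]$ and for $\epsilon\leq\epsilon_0$ small is equivalent to the stated exponential bound.
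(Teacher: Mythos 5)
There is a genuine gap, and it sits exactly at your Step~3. A direct unweighted $H^s$ energy estimate on $(p^\epsilon,\u^\epsilon,\H^\epsilon)$ does not close for this system: the singular term in the pressure equation \eqref{hnaaa} is $\frac{1}{\epsilon}\dv\bigl(2\u^\epsilon-\bar\kappa e^{-\epsilon p^\epsilon+\theta^\epsilon}\nabla\theta^\epsilon\bigr)$, and while the $\frac{2}{\epsilon}\dv\u^\epsilon$ part can be paired against $\frac{1}{\epsilon}\nabla p^\epsilon$ from \eqref{hnabb}, the heat-flux part $\frac{1}{\epsilon}\dv\bigl(\bar\kappa e^{-\epsilon p^\epsilon+\theta^\epsilon}\nabla\theta^\epsilon\bigr)$ has no counterpart anywhere in the momentum or induction equations and is genuinely of size $O(1/\epsilon)$ (the available bounds on $\theta^\epsilon$ give $\nabla\theta^\epsilon$ in $H^{s+1}$ without any factor $\epsilon$ to spare). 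In addition, since $\theta^\epsilon$ has $O(1)$ variation, the variable symmetrizer $e^{-\theta^\epsilon}$ generates commutators $[\partial^\alpha,e^{-\theta^\epsilon}]\partial_t\u^\epsilon$ which, after substituting $\partial_t\u^\epsilon\sim-\frac{1}{\epsilon}e^{\theta^\epsilon}\nabla p^\epsilon$, are again $O(1/\epsilon)$ and do not cancel. This is precisely why the paper never performs such an estimate; its strategy (announced in the introduction) is to bound $(\nabla p^\epsilon,\dv\u^\epsilon)$ \emph{algebraically} from \eqref{fastcb}--\eqref{fastcc} in terms of $(\epsilon\partial_t)(p^\epsilon,\u^\epsilon,\H^\epsilon)$ and the $\epsilon$-weighted quantities, to control $\cu(e^{-\theta^\epsilon}\u^\epsilon)$ and $\cu\H^\epsilon$ separately (Lemma~\ref{LLT}), and then to recover $\|\u^\epsilon\|_{H^s}$ by the div--curl inequality.

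The second, related omission is the uniform control of the time-differentiated quantities $(\epsilon\partial_t)(p^\epsilon,\u^\epsilon,\H^\epsilon)$ in $H^{s-1}$, which in the paper requires the linearized $L^2$ estimate of Lemma~\ref{Lefast} built on the modified velocity $\tilde\u=2\u-\bar\kappa e^{-\epsilon p_0}e^{\theta_0}\nabla\theta$ (so that the singular heat-conduction term is absorbed into the acoustic unknown and the $1/\epsilon$ terms cancel), followed by its application to $\partial^\beta(\epsilon\partial_t)$ of the solution in Lemma~\ref{LLfast}. Your Step~4 replaces this by the assertion that $\|\epsilon\partial_t\u^\epsilon\|_{H^s}$ etc.\ are ``bounded by $\mathcal{O}(T)$ through Step~2 and Lemma~\ref{Lc}''; but such bounds are only of the form $C(\mathcal{O}(T))$ with no small prefactor $\sqrt{T}$ or $\epsilon$, so feeding them into \eqref{fastcb}--\eqref{fastcc} returns $\sup_t\|p^\epsilon\|_{H^s}\le C(\mathcal{O}(T))$, which is circular and cannot produce the stated inequality $\mathcal{O}(T)\le C(\mathcal{O}_0)\exp\{(\sqrt T+\epsilon)C(\mathcal{O}(T))\}$. (Note also that the dissipation integral $\int_0^T\|\nabla p^\epsilon\|_{H^s}^2\,d\tau$ cannot be majorized by $T\sup_t\|\nabla p^\epsilon\|_{H^s}^2$, since $\sup_t\|p^\epsilon\|_{H^{s+1}}$ is not part of $\mathcal{O}(T)$; it too must come from the $(\epsilon\partial_t)$ bounds and their gradient dissipation as in Lemmas~\ref{LLfast}--\ref{LLfastb}.) Your Steps~1 and~2 are essentially the paper's Lemmas~\ref{HES}--\ref{LLb1} (up to choosing the good unknown $\epsilon p-\theta$ to symmetrize correctly), but without the linearized estimate with the modified velocity and the subsequent algebraic/curl recovery, the core of the proof is missing.
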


\section{Uniform estimates}

In this  section we shall establish the uniform bounds of the solutions to the Cauchy problem
 \eqref{hnaaa}--\eqref{hnadd}, \eqref{hnas} stated in Proposition \ref{hPb} by modifying the approaches
developed in \cite{MS01,A06,LST} and making careful use of the special structure of the system
\eqref{hnaaa}--\eqref{hnadd}. In the rest of this section, we will drop the superscripts $\epsilon$
of the variables in the Cauchy problem and denote
\begin{equation*}  
\Psi(\u)=2\bar \mu \mathbb{D}(\u)+\bar \lambda\dv\u \;\mathbf{I}_3.
\end{equation*}
Recall that it has been assumed that $\mu^\epsilon \equiv \bar\mu>0$, $\nu^\epsilon \equiv \bar\nu>0$,
$\kappa^\epsilon \equiv \bar\kappa>0$, and $\lambda^\epsilon \equiv \bar\lambda$ independent of $\epsilon$.

\subsection{$H^s$-estimates on $(\H,\theta)$ and $ (\epsilon p,\epsilon\u )$ }

To prove Proposition \ref{hPb}, we first give some
estimates derived directly from the system \eqref{hnaaa}--\eqref{hnadd}. Denoting
\begin{align}
  \mathcal{Q}(t):& =\|(  p, \u, \H, \theta-\bar \theta)(t) \|_{H^{s}}
  +\|(\epsilon p,\epsilon\u,\epsilon\H, \theta-\bar \theta)(t) \|_{H_\epsilon^{s+2}}, \nonumber\\
 \mathcal{S}(t):& =\| (\nabla\u, \nabla p,\nabla\H)(t)\|_{H^s}
 +\|\nabla (\epsilon\u,\epsilon\H,\theta)(t)\|_{H_\epsilon^{s+2}}, \nonumber
\end{align}
one has
\begin{lem}\label{HES}
 Let $s\geq 4$  be an integer and $(p,\u,\H,\theta)$ be a solution to the problem \eqref{hnaaa}--\eqref{hnadd},
 \eqref{hnas} on $[0,T_1]$.
There exists an increasing function $C(\cdot)$ such that, for any $\epsilon \in (0,1]$ and
$t\in [0,T]$, $T=\min\{T_1,1\}$, it holds that
 \begin{align}
&\sup_{\tau\in[0,t]}\big\{\|(\H,\theta)(\tau)\|_{H^s}+\|\epsilon\H(\tau)\|_{H_\epsilon^{s+2}}\big\}\nonumber\\
&\quad +\bigg\{\int^t_0\big(\|\nabla(\H,\theta)\|_{H^{s}}^2+ \|\nabla(\epsilon\H)\|^2_{H_\epsilon^{s+2}}\big)(\tau){\rm d}\tau\bigg\}^{1/2}
 \leq C(\mathcal{O}_0)\exp\big\{\sqrt{T}C(\mathcal{O}(T))\big\}. \nonumber 
\end{align}
\end{lem}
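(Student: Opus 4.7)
The structural observation driving the proof is that \eqref{hnacc} and \eqref{hnadd} form a parabolic subsystem whose dissipation coefficients ($\bar\nu$ for $\H$ and $\bar\kappa e^{\theta-\epsilon p}$ for $\theta$) are uniformly positive and independent of $\epsilon$, while $\u$ and $p$ enter only through lower-order transport and source terms whose coefficients are controlled uniformly in $\epsilon$ by $\mathcal{Q}$ through the embedding $H^s\hookrightarrow L^\infty$ (valid since $s\ge 4$). The plan is therefore to perform classical parabolic energy estimates at regularity $H^s$ for $(\H,\theta)$, supplement them by a refined $\epsilon^2$-weighted $H^{s+1}$ estimate for $\H$ to capture the $H^{s+1}_\epsilon$ piece of $\epsilon\H$, and close everything via Gronwall.

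For the $H^s$-bound on $\H$, use \eqref{vb} together with $\dv\H=0$ to rewrite $\cu(\u\times\H)=(\H\cdot\nabla)\u-(\u\cdot\nabla)\H-\H\,\dv\u$. For each multi-index $\alpha$ with $|\alpha|\le s$, apply $\partial^\alpha$ to \eqref{hnacc}, pair with $\partial^\alpha\H$ in $L^2$, and integrate by parts on $\bar\nu\Delta\H$; this produces $\tfrac12\tfrac{d}{dt}\|\partial^\alpha\H\|_{L^2}^2+\bar\nu\|\nabla\partial^\alpha\H\|_{L^2}^2$ on the left. The commutator $[\partial^\alpha,\u\cdot\nabla]\H$ is controlled by Lemma~\ref{Lb}, and the remaining quadratic terms by Moser-type product estimates, jointly dominated by $C(\mathcal{Q})\|\H\|_{H^s}(\|\H\|_{H^s}+\|\nabla\u\|_{H^s})$. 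The $H^s$-bound for $\theta-\bar\theta$ is parallel: expanding the diffusion as $\bar\kappa e^{\theta-\epsilon p}\Delta\theta+\bar\kappa e^{\theta-\epsilon p}|\nabla\theta|^2$ and using the $L^\infty$-control of $(\theta,p)$ to ensure $e^{\theta-\epsilon p}\ge c(\mathcal{Q})>0$ preserves parabolicity; the source $\dv\u$ is absorbed via Young's inequality into $\|\nabla\u\|_{H^s}^2$, while the $\epsilon^2|\cu\H|^2$ and $\epsilon^2\Psi(\u):\nabla\u$ contributions carry a small prefactor that easily dominates any high-order product.

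The essential new difficulty arises in bounding $\epsilon^2\|\H\|_{H^{s+1}}^2$, which together with the already-controlled $\epsilon^2\|\H\|_{H^s}^2$ constitutes $\|\epsilon\H\|_{H^{s+1}_\epsilon}^2$. Applying $\partial^\alpha$ with $|\alpha|=s+1$ to \eqref{hnacc} and pairing with $\epsilon^2\partial^\alpha\H$ produces a top-order term of the form $\H\,\partial^{s+2}\u$ from $\H\,\dv\u$, which would demand control of $\|\u\|_{H^{s+2}}$ itself, whereas only $\epsilon^2\|\u\|_{H^{s+2}}$ is bounded by $\mathcal{Q}$. The remedy, which is the main point of the lemma, is to keep the curl structure of the convective term intact and integrate by parts once, using
\begin{equation*}
\epsilon^2\langle\cu\partial^\alpha(\u\times\H),\partial^\alpha\H\rangle=\epsilon^2\langle\partial^\alpha(\u\times\H),\cu\partial^\alpha\H\rangle.
\end{equation*}
A Moser estimate on $\partial^\alpha(\u\times\H)$ now involves only the $H^{s+1}$-norm of $\u$; splitting $\epsilon^2=\epsilon\cdot\epsilon$, one factor of $\epsilon$ pairs with $\|\u\|_{H^{s+1}}$ (bounded by $\mathcal{Q}$) and the other with $\|\cu\partial^\alpha\H\|_{L^2}\le\|\nabla\H\|_{H^{s+1}}$, which is absorbed into the dissipation $\epsilon^2\bar\nu\|\nabla\H\|_{H^{s+1}}^2$ via Young's inequality. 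This exploitation of the induction-equation geometry is the one place where MHD-specific care is needed; the remaining commutator and lower-order pieces are routine.

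Summing the three energy estimates, integrating over $[0,t]$, and applying Cauchy--Schwarz in time to pair each $L^\infty_t$ factor with an $L^2_t$ factor (which produces the characteristic $\sqrt T$), we obtain an integral inequality of Gronwall type whose resolution gives exactly the claimed bound $C(\mathcal{O}_0)\exp\{\sqrt T\,C(\mathcal{O}(T))\}$. The main obstacle throughout is the single top-order difficulty described above; the rest of the argument follows standard parabolic lines.
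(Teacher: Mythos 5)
Your proposal follows essentially the same route as the paper: $H^s$ parabolic energy estimates for $\H$ and $\theta$ using the commutator bound of Lemma \ref{Lb} and Moser-type inequalities, an $\epsilon$-weighted top-order estimate for $\H$ in which a single integration by parts (you keep the curl and move it onto $\partial^\alpha\H$; the paper first expands $\cu(\u\times\H)$ via $\dv\H=0$ and then integrates by parts on $\epsilon\partial^\alpha(\H\dv\u)$ and $\epsilon\partial^\alpha((\H\cdot\nabla)\u)$) trades the extra derivative on $\u$ against the dissipation $\bar\nu\|\nabla\partial^\alpha(\epsilon\H)\|^2_{L^2}$, and a Gronwall argument with Cauchy--Schwarz in time generating the $\sqrt{T}$. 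The two integration-by-parts devices are equivalent in substance, and your $\epsilon^2$-weighted $H^{s+1}$ bound indeed suffices for $\|\epsilon\H\|_{H^{s+1}_\epsilon}$ (the paper proves slightly more, namely $\epsilon\|\H\|_{H^{s+1}}$ and $\epsilon^2\|\H\|_{H^{s+2}}$).

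One caveat in the $\theta$-estimate: absorbing the source $\dv\u$ via Young's inequality into $\|\nabla\u\|^2_{H^s}$ does not give the stated bound, because there is no $\epsilon$-free dissipation for $\u$ at this stage, so $\int_0^t\|\nabla\u\|^2_{H^s}\,d\tau$ can only be bounded by $\mathcal{O}(t)^2$, which carries no smallness in $T$ and hence cannot be brought into the form $C(\mathcal{O}_0)\exp\{\sqrt{T}C(\mathcal{O}(T))\}$ needed for the bootstrap in Proposition \ref{hPb}. That term must be kept linear in $\mathcal{S}$, bounded by $C(\mathcal{Q})\mathcal{S}$ as the paper does, so that its time integral is $\le \sqrt{t}\,\bigl(\int_0^t\mathcal{S}^2\,d\tau\bigr)^{1/2}\le \sqrt{t}\,C(\mathcal{O}(t))$ -- precisely the Cauchy--Schwarz-in-time mechanism you invoke in your closing paragraph; applied consistently to this term, your argument matches the paper's and is correct.
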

\begin{proof}
For any multi-index $\alpha$ satisfying $0\leq |\alpha|\leq s$,
let $\H_\alpha=\partial^\alpha\H$. Then
\begin{align*}
\partial_t\H_\alpha+(\u\cdot\nabla)\H_\alpha-\bar{\nu}\Delta\H_\alpha=-[\partial^\alpha,\u]\cdot\nabla\H
-\partial^\alpha(\H\dv\u)+\partial^\alpha((\H\cdot\nabla)\u).
\end{align*}
Taking inner product of the above equations with $\H_\alpha$ and integrating by parts, we have
\begin{align*}
\frac{1}{2}\frac{\rm d}{{\rm d}t}\|\H_\alpha\|_{L^2}^2+\bar{\nu}\|\nabla\H_\alpha\|_{L^2}^2=&-\langle(\u\cdot\nabla)
\H_\alpha,\H_\alpha\rangle-\langle[\partial^\alpha,\u]\cdot\nabla\H,\H_\alpha\rangle\nonumber\\
 &-\langle\partial^\alpha(\H\dv\u),\H_\alpha\rangle+
\langle\partial^\alpha((\H\cdot\nabla)\u),\H_\alpha\rangle.
\end{align*}
By integration by parts we obtain
\begin{align*}
-\langle(\u\cdot\nabla) \H_\alpha,\H_\alpha\rangle=\frac{1}{2}\int \dv
\u|\H_\alpha|^2 dx\leq C(\mathcal{Q})\| \H_\alpha\|^2_{L^2}.
\end{align*}
It follows from Lemma \ref{Lb} that
$$  \|[\partial^\alpha,\u]\cdot\nabla\H\|_{L^2}\leq
C_0(\|\u\|_{W^{1,\infty}}\|\nabla\H\|_{H^{s-1}}+\|\u\|_{H^{s}}\|\nabla\H\|_{L^\infty})
\leq C(\mathcal{Q}). $$
By Sobolev's inequality, one gets
$$ -\langle\partial^\alpha(\H\dv\u),\H_\alpha\rangle +
\langle\partial^\alpha(\H\cdot\nabla\u),\H_\alpha\rangle
\leq C_0\|\H\|_{H^s}^2\|\u\|_{H^{s+1}}\leq C(\mathcal{Q})\mathcal{S}. $$
Thus, we conclude that
\begin{align*}
&  \sup_{\tau\in[0,t]}\|\H(\tau)\|_{H^s}+\bar{\nu}\bigg\{\int^t_0\|\nabla\H(\tau)\|^2_{H^s}{\rm d}\tau\bigg\}^{1/2}\\
&\quad \leq
C(\mathcal{O}_0)+C(\mathcal{O}(t)t+C(\mathcal{O}(t))\int_0^t\mathcal{S}(\tau){\rm d}\tau  \\
&\quad\leq C(\mathcal{O}_0)+C(\mathcal{O}(t)t+C(\mathcal{O}(t))\sqrt{t}  \\
&\quad\leq C(\mathcal{O}_0)+C(\mathcal{O}(T))\sqrt{T}  \\
&\quad\leq C(\mathcal{O}_0)\exp\big\{\sqrt{T}C(\mathcal{O}(T))\big\}.
\end{align*}

Now denote $\hat{\H}=\epsilon \H$ and $\hat{\H}_\alpha=\partial^\alpha(\epsilon\H)$
for $0\leq |\alpha|\leq  s+1$. Then, $\hat{\H}_\alpha$ satisfies
\begin{align}\label{hhs}
& \partial_t\hat{\H}_\alpha+(\u\cdot\nabla)\hat{\H}_\alpha-\bar{\nu}\Delta\hat{\H}_\alpha\nonumber\\
& \qquad =-\epsilon[\partial^\alpha,\u]\cdot\nabla\H
-\epsilon\partial^\alpha(\H\dv\u)+\epsilon\partial^\alpha((\H\cdot\nabla)\u).
\end{align}
Applying Lemma \ref{Lb} again implies that
$$ \|-\epsilon[\partial^\alpha,\u]\cdot\nabla\H\|_{L^2}\leq
C_0(\|\u\|_{W^{1,\infty}}\|\nabla\hat{\H}\|_{H^s}
+\|\epsilon\u\|_{H^{s+1}}\|\nabla\H\|_{L^\infty}) \leq C(\mathcal{Q}),  $$
while an integration by parts and Sobolev's inequality lead to
\begin{align*}
-\langle\epsilon\partial^\alpha(\H\dv\u),\hat{\H}_\alpha\rangle
+\langle\epsilon\partial^\alpha((\H\cdot\nabla)\u),\hat{\H}_\alpha\rangle
& \leq \frac{\bar{\nu}}{2}\|\nabla\hat{\H}_\alpha\|_{L^2}^2
+C_0\|\H\|_{H^s}^2\|\epsilon\u\|_{H^{s+1}}^2\\
& \leq \frac{\bar{\nu}}{2}\|\nabla\hat{\H}_\alpha\|_{L^2}^2+C(\mathcal{Q}).
\end{align*}
Hence, after summing over $\alpha$ for $0\leq |\alpha|\leq  s+1$, we obtain that
\begin{align*}
\sup_{\tau\in[0,t]}\|\epsilon\H(\tau)\|_{H^{s+1}}+\Big\{\int^t_0\bar{\nu}\|\nabla(\epsilon\H)(\tau)\|^2_{H^{s+1}}{\rm d}\tau\bigg\}^{1/2}&\leq
C(\mathcal{O}_0)\exp\big\{\sqrt{T}C(\mathcal{O}(T))\big\}.
\end{align*}
Similarly,  we can obtain
\begin{align*}
\sup_{\tau\in[0,t]}\epsilon^2\|\H(\tau)\|_{H^{s+2}}+\epsilon^2\bigg \{\int^t_0\bar{\nu}\|\nabla\H(\tau)\|^2_{H^{s+2}}{\rm d}\tau\bigg\}^{1/2}&\leq
C(\mathcal{O}_0)\exp\big\{\sqrt{T}C(\mathcal{O}(T))\big\}.
\end{align*}

Next, we estimate $\theta$. Using Sobolev's inequality, one finds that
\begin{align*}
& \|\partial^s(\epsilon^2e^{-\epsilon p}[\bar{\nu}|\cu\H|^2
+\Psi(\u):\nabla\u])\|_{L^2}\nonumber\\
& \qquad\qquad\qquad \leq C_0\|\epsilon
p\|_{H^s}(\|\epsilon\nabla\H\|^2_{H^{s}}+\|\epsilon \nabla\u\|^2_{H^s})\leq
C(\mathcal{Q}).
\end{align*}
Employing arguments similar to those used for $\H$, we can obtain
\begin{align*}
\sup_{\tau\in[0,t]}\|(\theta-\bar \theta)(\tau)\|_{H^{s}}+\bigg \{\int^t_0\bar{\kappa}\|\nabla\theta(\tau)\|^2_{H^{s}}{\rm d}\tau\bigg\}^{1/2}&\leq
C(\mathcal{O}_0)\exp\big\{\sqrt{T}C(\mathcal{O}(T))\big\}.
\end{align*}
Thus, the lemma is proved.
\end{proof}

\begin{lem}\label{EP1}
Let $s\geq 4$  be an integer and $(p,\u,\H,\theta)$ be a solution to the problem \eqref{hnaaa}--\eqref{hnadd},
 \eqref{hnas} on $[0,T_1]$. Then there exists an increasing function $C(\cdot)$ such that,
for any $\epsilon \in (0,1]$ and $t\in [0,T], T=\min\{T_1,1\}$, it holds that
\begin{align}
&\sup_{\tau\in
[0,t]}\|(\epsilon p,\epsilon\u)(\tau)\|_{H^s}+\bigg \{\int^t_0\bar{\mu}
\|\nabla (\epsilon\u )(\tau)\|^2_{H^{s}}{\rm d}\tau\bigg\}^{1/2} \leq
C(\mathcal{O}_0)\exp\big\{\sqrt{T}C(\mathcal{O}(T))\big\}.\nonumber 
\end{align}
\end{lem}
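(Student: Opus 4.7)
The plan is to carry out a high-order energy estimate directly on the system governing the scaled unknowns $\hat p=\epsilon p$ and $\hat\u=\epsilon\u$. Multiplying \eqref{hnaaa} and \eqref{hnabb} by $\epsilon$, one obtains
\begin{align*}
\partial_t\hat p+(\u\cdot\nabla)\hat p+\tfrac{2}{\epsilon}\dv\hat\u &= \tilde F_1,\\
e^{-\theta}\bigl[\partial_t\hat\u+(\u\cdot\nabla)\hat\u\bigr]+\tfrac{1}{\epsilon}\nabla\hat p &= e^{-\epsilon p}\dv\Psi(\hat\u)+\epsilon e^{-\epsilon p}(\cu\H)\times\H,
\end{align*}
with $\tilde F_1:=\bar\kappa\dv(e^{-\epsilon p+\theta}\nabla\theta)+\epsilon^2 e^{-\epsilon p}\bigl[\bar\nu|\cu\H|^2+\Psi(\u):\nabla\u\bigr]+\bar\kappa e^{-\epsilon p+\theta}\nabla\hat p\cdot\nabla\theta$. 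The crucial observation is that the viscous term on the right of the momentum equation now acts directly on $\hat\u$, which will supply the parabolic piece $\int_0^t\bar\mu\|\nabla\hat\u\|_{H^s}^2\,d\tau$ in the claimed inequality.

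For each multi-index $\alpha$ with $|\alpha|\le s$, I would apply $\partial^\alpha$ to both equations, test the first against $\tfrac12\partial^\alpha\hat p$ and the second against $\partial^\alpha\hat\u$, and work with the weighted energy
\begin{equation*}
E_\alpha(t)=\tfrac14\|\partial^\alpha\hat p\|_{L^2}^2+\tfrac12\langle e^{-\theta}\partial^\alpha\hat\u,\partial^\alpha\hat\u\rangle.
\end{equation*}
The asymmetric factor $\tfrac12$ in front of the first equation is the symmetrizer needed to make the two singular couplings $\tfrac{1}{\epsilon}\langle\dv\partial^\alpha\hat\u,\partial^\alpha\hat p\rangle$ and $\tfrac{1}{\epsilon}\langle\nabla\partial^\alpha\hat p,\partial^\alpha\hat\u\rangle$ cancel by integration by parts, eliminating the only $\epsilon^{-1}$ term. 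The principal viscous contribution $\bar\mu\Delta\partial^\alpha\hat\u+(\bar\mu+\bar\lambda)\nabla\dv\partial^\alpha\hat\u$, tested against $e^{-\epsilon p}\partial^\alpha\hat\u$ and integrated by parts, produces the desired dissipation, up to lower-order terms where the weight gradient $\nabla e^{-\epsilon p}=-\epsilon e^{-\epsilon p}\nabla p$ carries a compensating power of $\epsilon$.

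The remaining contributions are handled routinely. The transport commutators $[\partial^\alpha,\u\cdot\nabla]\hat p$ and $[\partial^\alpha,e^{-\theta}\u\cdot\nabla]\hat\u$ are bounded in $L^2$ by $C(\mathcal{Q})$ via \eqref{comc}, using that $\|\hat\u\|_{H^{s+1}}\le\|\epsilon\u\|_{H_\epsilon^{s+2}}\le\mathcal{Q}$. The heat-conduction piece of $\tilde F_1$ is controlled in $H^s$ because $\|\theta-\bar\theta\|_{H^{s+1}}$ and $\epsilon\|\theta-\bar\theta\|_{H^{s+2}}$ both appear in $\mathcal{Q}$, and the quadratic magnetic terms $|\cu\H|^2$ and $(\cu\H)\times\H$ are handled via Moser-type products together with Lemma \ref{HES}. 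Summing over $|\alpha|\le s$ leads to a differential inequality of the form
\begin{equation*}
\tfrac{d}{dt}\sum_{|\alpha|\le s}E_\alpha+c\bar\mu\|\nabla\hat\u\|_{H^s}^2\le C(\mathcal{Q})\bigl(1+\mathcal{S}\bigr),
\end{equation*}
which upon integration on $[0,t]\subset[0,T]$ and Cauchy--Schwarz in time, $\int_0^t\mathcal{S}\,d\tau\le\sqrt{T}\,(\int_0^t\mathcal{S}^2\,d\tau)^{1/2}\le\sqrt{T}\,C(\mathcal{O}(T))$, yields the stated bound via the envelope $C(\mathcal{O}_0)\exp\{\sqrt{T}\,C(\mathcal{O}(T))\}$.

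The main obstacle I anticipate is ensuring that no $\epsilon^{-1}$ factor sneaks back through the commutators involving $e^{-\theta}$ and $\partial_t\hat\u$. The critical case is $[\partial^\alpha,e^{-\theta}]\partial_t\hat\u$: to bound it via Lemma \ref{Lb} one must substitute $\partial_t\hat\u$ from the momentum equation, using crucially that $\tfrac{1}{\epsilon}\nabla\hat p=\nabla p$, whose $H^{s-1}$ norm is already part of $\mathcal{Q}$. This shows $\partial_t\hat\u\in H^{s-1}\hookrightarrow L^\infty$ uniformly in $\epsilon$, so the commutator is controlled without any singular $\epsilon$-loss. Analogous care is needed for $[\partial^\alpha,e^{-\epsilon p}]\dv\Psi(\hat\u)$, which is harmless precisely because every derivative of $e^{-\epsilon p}$ produces a compensating factor of $\epsilon$.
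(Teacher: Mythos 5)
Your argument is correct, but it is organized differently from the paper's own proof of this lemma. You keep the $\epsilon^{-1}$-singular form of the coupling and kill it with the classical symmetrizer (testing the scaled pressure equation against $\tfrac12\partial^\alpha\hat p$ and the scaled momentum equation against $\partial^\alpha\hat\u$, so that the terms $\tfrac1\epsilon\langle\dv\partial^\alpha\hat\u,\partial^\alpha\hat p\rangle$ and $\tfrac1\epsilon\langle\nabla\partial^\alpha\hat p,\partial^\alpha\hat\u\rangle$ cancel after integration by parts). The paper does not need any cancellation at this level: after multiplying \eqref{hnaaa} and \eqref{hnabb} by $\epsilon$, the formerly singular terms become the order-one quantities $\dv\bigl(2\u-\bar\kappa a(\epsilon p)b(\theta)\nabla\theta\bigr)$ and $\nabla p$, whose $H^s$-norms are controlled by $\mathcal{S}$, so $\epsilon p$ and $\epsilon\u$ are estimated \emph{separately}, each with its own transport-type $L^2$ energy identity, and the time integral $\int_0^t\mathcal{S}\,d\tau\le\sqrt{T}\,C(\mathcal{O}(T))$ closes the bound exactly as in your last display. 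Your coupled, symmetrized energy is in fact the device the paper reserves for the next-order estimate (Lemma \ref{LLb} on $(\epsilon p-\theta,\epsilon\u,\theta)$ in $H^{s+1}$), where it is genuinely needed; using it already here is perfectly legitimate and yields the same dissipation $\int_0^t\bar\mu\|\nabla(\epsilon\u)\|^2_{H^s}\,d\tau$ from the viscous term, while the paper's decoupled route is simply shorter. Your treatment of the delicate commutators ($[\partial^\alpha,e^{-\theta}]\partial_t\hat\u$ handled by substituting the momentum equation and using $\tfrac1\epsilon\nabla\hat p=\nabla p$ with $\|\nabla p\|_{H^{s-1}}\le\mathcal{Q}$, and $[\partial^\alpha,e^{-\epsilon p}]\dv\Psi(\hat\u)$) matches the paper's own technique, cf.\ \eqref{slowbf} and \eqref{t11}.

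One small correction of attribution: the heat-conduction piece $\bar\kappa\dv\bigl(e^{-\epsilon p+\theta}\nabla\theta\bigr)$ is \emph{not} bounded in $H^s$ by $C(\mathcal{Q})$ alone, since that requires $\|\theta\|_{H^{s+2}}$ without an $\epsilon$-weight, whereas $\mathcal{Q}$ only carries $\|\theta-\bar\theta\|_{H^{s+1}}+\epsilon\|\theta-\bar\theta\|_{H^{s+2}}$; the needed control comes from $\mathcal{S}$, which contains $\|\nabla\theta\|_{H_\epsilon^{s+2}}\ge\|\nabla\theta\|_{H^{s+1}}$. Since your final differential inequality already allows a right-hand side $C(\mathcal{Q})(1+\mathcal{S})$, this is only a misstatement in the justification, not a gap: the term lands in the $\mathcal{S}$-part and is absorbed by the $\sqrt{T}$ time integration, exactly as in the paper's bound for its term $h_2$.
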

\begin{proof}
Let $\check{p}=\epsilon p$, and $\check{p}_\alpha=\partial^\alpha(\epsilon p)$
for any multi-index $\alpha$ satisfying  $0\leq|\alpha|\leq s$. Then
\begin{align}
\partial_t \check{p}_\alpha+(\u\cdot\nabla)\check{p}_\alpha=\,&-[\partial^\alpha,\u]
\cdot(\nabla\check{p})-\partial^\alpha[\dv(2\u-\kappa a(\epsilon p)b(\theta)\nabla \theta)]\nonumber\\
 & +\partial^\alpha \{a(\epsilon p) [\nu|\cu(\epsilon\H)|^2
+\Psi(\epsilon\u):\nabla(\epsilon\u)]\}\nonumber\\
&+\kappa \partial^\alpha\{ a(\epsilon p)b(\theta)\nabla(\epsilon
p)\cdot\nabla\theta\}\nonumber\\
:=\,&h_1+h_2+h_3+h_4,  \label{ee0}
\end{align}
where, for simplicity of presentation, we have set
$$ a(\epsilon p): = e^{-\epsilon p}, \quad b(\theta): =e^{\theta}.$$

It is easy to see that the energy estimate for \eqref{ee0} gives
\begin{align}\label{ee1}
\frac{1}{2}\frac{\rm d}{{\rm d}t}\|\check{p}_\alpha\|_{L^2}^2=-\langle(\u\cdot\nabla)\check{p}_\alpha,
\check{p}_\alpha\rangle+\langle h_1+h_2+h_3+h_4, \check{p}_\alpha\rangle ,
\end{align}
where we have to estimate each term on the right-hand side of \eqref{ee1}.
First, an integration by parts yields
\begin{align*}
-\langle (\u\cdot\nabla)\check{p}_\alpha,
\check{p}_\alpha\rangle=\frac{1}{2}\int \dv \u|\check{p}_\alpha|^2
{\rm d}x\leq C(\mathcal{Q})\| \check{p}_\alpha\|^2_{L^2},
\end{align*}
while the commutator inequality \eqref{comc} leads to
\begin{align*}
\|h_1\|\leq C_0(\|\u\|_{W^{1,\infty}}\|\nabla\check{p}\|_{H^{s-1}}+
\|\u\|_{H^s}\|\nabla\check{p}\|_{L^\infty})\leq C(\mathcal{Q}).
\end{align*}
Consequently,
\begin{align*}
\langle h_1,\check{p}_\alpha\rangle\leq
\|\check{p}_\alpha\|_{L^2}\|h_1\|_{L^2}\leq C(\mathcal{Q}).
\end{align*}

From Sobolev's inequality one gets
$$
\|h_2\|\leq C_0\|\u\|_{H^{s+1}}+\|\theta\|_{H^{s}}\|\epsilon
p\|_{H^s}\|\theta\|_{H^{s+2}} \leq C(\mathcal{S})(1+ C(\mathcal{Q})), $$
whence,
\begin{align*}
\langle h_2, \check{p}_\alpha\rangle \leq\, &
C(\mathcal{Q})C(\mathcal{S}).
\end{align*}

Similarly, one can prove that
\begin{align*}
\langle h_3+h_4, \check{p}_\alpha\rangle \leq\, & C(\mathcal{Q}).
\end{align*}
Hence, we conclude that
\begin{align*}
\sup_{\tau\in
[0,t]}\|\epsilon p (\tau)\|_{H^s}\leq
C(\mathcal{O}_0)\exp\big\{\sqrt{T}C(\mathcal{O}(T))\big\}.
\end{align*}
In a similar way, we can estimate $\u$. Thus the proof of the lemma is completed.
\end{proof}

Next, we control the norm $\|(\u,p)\|_{H^s}$. The idea is to bound the
norms of $(\dv \u,$ $ \nabla p)$ in terms of the suitable norm of $(\epsilon\u, \epsilon p,
\epsilon\H,\theta)$ and $\epsilon(\partial_t\u,\partial_t p)$ by making use of
the structure of the system. To this end, we first estimate
$\|(\epsilon\u,\epsilon p,\theta)\|_{H^{s+1}}$.

\subsection{$H^{s+1}$-estimates on $(\epsilon\u,\epsilon p,\epsilon \H,\theta)$ }

Following \cite{A06}, we set
\begin{align*}
   (\hat{p}, \hat \u,\hat \H,\hat \theta):= (\epsilon p-\theta, \epsilon \u, \epsilon\H, \theta-\bar \theta).
\end{align*}
A straightforward calculation implies that $(\hat p, \hat \u,\hat\H,\hat \theta)$
solves the following system:
\begin{align}
   &\partial_t \hat{p}  +(\u \cdot\nabla)\hat{p}
   +\frac{1}{\epsilon}\dv  \hat \u =0, \label{slowf} \\
&b(-\theta)[\partial_t\hat \u +(\u\cdot\nabla)\hat\u ]+\frac{1}{\epsilon}
(\nabla \hat p+\nabla \hat \theta)\nonumber\\
 &  \qquad \qquad \qquad  =a( \epsilon p)[(\cu \H)\times \hat\H+\dv\Psi (\hat \u )], \label{slowg} \\
&\partial_t\hat \H +\u\cdot\nabla\hat
\H+\H\dv\hat\u-\H\cdot\nabla\hat\u- \bar\nu \Delta\hat \H = 0,\quad
\dv\hat \H =0,\label{slowh} \\
&\partial_t \hat \theta  +(\u\cdot\nabla)\hat \theta +
\frac{1}{\epsilon}\dv\hat \u  = \epsilon a(\epsilon p)[ \bar \nu\,
\cu\H :\cu\hat \H
+\epsilon a(\epsilon p)\Psi (\u ):\nabla\hat \u ]\nonumber\\
& \qquad \qquad \qquad \qquad \qquad\  \quad +\bar \kappa
a(\epsilon p)\dv (b(\theta)\nabla \hat  \theta ).
  \label{slowi}
\end{align}

We have
\begin{lem}\label{LLb}
 Let $s\geq 4$  be an integer and $(p,\u,\H,\theta)$ be a solution to the problem \eqref{hnaaa}--\eqref{hnadd},
 \eqref{hnas} on $[0,T_1]$. Then there exist a constant $l_1>0$ and an increasing function $C(\cdot)$ such that,
 for any $\epsilon \in (0,1]$ and $t\in [0,T]$, $T=\min\{T_1,1\}$, it holds that
\begin{align}\label{slowao}
 & \sup_{\tau\in [0,t]}\|(\epsilon q,\epsilon\u, \theta-\bar \theta)(\tau)\|_{H^{s+1}}
+l_1\bigg \{\int^t_0\|\nabla (\epsilon\u,\theta)(\tau)\|^2_{H^{s+1}}{\rm d}\tau\bigg\}^{1/2}\nonumber\\
 &\quad  \leq  C(\mathcal{O}_0)\exp\big\{\sqrt{T}C(\mathcal{O}(T))\big\}.
 \end{align}

\end{lem}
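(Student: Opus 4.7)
The plan is an $H^{s+1}$ energy estimate for $(\hat p,\hat\u,\hat\H,\hat\theta):=(\epsilon p-\theta,\epsilon\u,\epsilon\H,\theta-\bar\theta)$ directly from the slow system \eqref{slowf}--\eqref{slowi}. For each multi-index $\alpha$ with $|\alpha|\le s+1$, I apply $\partial^\alpha$ to \eqref{slowf}, \eqref{slowg} and \eqref{slowi} and take $L^2$ inner products against $\partial^\alpha\hat p$, $\partial^\alpha\hat\u$ and $\partial^\alpha\hat\theta$ respectively. Two independent integration-by-parts cancellations kill the $1/\epsilon$-singular couplings, namely the acoustic one
\[
\tfrac1\epsilon\langle\dv\partial^\alpha\hat\u,\partial^\alpha\hat p\rangle+\tfrac1\epsilon\langle\nabla\partial^\alpha\hat p,\partial^\alpha\hat\u\rangle=0,
\]
and the thermal one
\[
\tfrac1\epsilon\langle\nabla\partial^\alpha\hat\theta,\partial^\alpha\hat\u\rangle+\tfrac1\epsilon\langle\dv\partial^\alpha\hat\u,\partial^\alpha\hat\theta\rangle=0.
\]
The viscous term $a(\epsilon p)\dv\Psi(\hat\u)$ in \eqref{slowg} and the heat-conduction term $\bar\kappa a(\epsilon p)\dv(b(\theta)\nabla\hat\theta)$ in \eqref{slowi}, after integration by parts, yield the coercive left-hand-side dissipation $l_1\bigl(\|\nabla\hat\u\|_{H^{s+1}}^2+\|\nabla\hat\theta\|_{H^{s+1}}^2\bigr)$ for some $l_1>0$, using the pointwise positivity of $a$, $b$ together with $\bar\mu,\bar\mu+\bar\lambda,\bar\kappa>0$.

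The delicate step is the Lorentz source $a(\epsilon p)(\cu\H)\times\hat\H$, whose highest-order contribution $\int a\,(\cu\partial^\alpha\H)\times\hat\H\cdot\partial^\alpha\hat\u\,dx$ carries an implicit $1/\epsilon$ because $\partial^\alpha\H=\partial^\alpha\hat\H/\epsilon$. Using the triple-product identity $(\cu\partial^\alpha\H)\times\hat\H\cdot\partial^\alpha\hat\u=\cu\partial^\alpha\H\cdot(\hat\H\times\partial^\alpha\hat\u)$ followed by integration by parts on the curl, this integral rewrites as
\[
\frac1\epsilon\int\partial^\alpha\hat\H\cdot\cu\bigl[a(\epsilon p)(\hat\H\times\partial^\alpha\hat\u)\bigr]dx.
\]
Expanding the curl by \eqref{vb} and using the scaling smallness $\|\hat\H\|_{L^\infty}+\|\nabla\hat\H\|_{L^\infty}=O(\epsilon\mathcal Q)$ (Sobolev embedding applied to $\hat\H=\epsilon\H$) together with $\nabla a(\epsilon p)=-\epsilon a\nabla p=O(\epsilon)$, the prefactor $1/\epsilon$ is absorbed, leaving a bound of the form $C(\mathcal Q)\|\hat\H\|_{H^{s+1}}\bigl(\|\nabla\hat\u\|_{H^{s+1}}+\|\hat\u\|_{H^{s+1}}\bigr)$. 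A Cauchy--Schwarz split transfers the $\|\nabla\hat\u\|$-part into $\tfrac{\bar\mu}{4}\|\nabla\hat\u\|_{H^{s+1}}^2$ plus $C(\mathcal Q)\|\hat\H\|_{H^{s+1}}^2$, and the latter is controlled through $\mathcal Q$ and Lemma \ref{HES}. Lower-order Leibniz remainders from $\partial^\alpha[(\cu\H)\times\hat\H]$, as well as the Joule--viscous heating source $\epsilon a\bar\nu|\cu\hat\H|^2+\epsilon^2 a^2\Psi(\u){:}\nabla\hat\u$ in the temperature equation, all carry at least one explicit power of $\epsilon$ and are handled analogously.

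The remaining ingredients are routine. Transport terms $(\u\cdot\nabla)\partial^\alpha(\cdot)$ paired with $\partial^\alpha(\cdot)$ integrate to $\tfrac12\int(\dv\u)|\cdot|^2$, bounded by $C(\mathcal Q)$ times the squared $H^{s+1}$ norms; commutators such as $[\partial^\alpha,\u]\nabla\hat p$ and $[\partial^\alpha,b(-\theta)]\partial_t\hat\u$ are estimated by Lemma \ref{Lb} in terms of $\mathcal Q$. Summing over $|\alpha|\le s+1$, integrating in $[0,t]$ and applying Cauchy--Schwarz in time to replace quadratic-in-$\mathcal S$ products by $\sqrt T$ times their $L^2_t$ norms, one arrives at an inequality of the form
\[
\sup_{[0,t]}\|(\hat p,\hat\u,\hat\theta)\|_{H^{s+1}}^2+l_1\int_0^t\|\nabla(\hat\u,\hat\theta)\|_{H^{s+1}}^2\,d\tau\le \mathcal O_0^2+(\sqrt T+\epsilon)\,C(\mathcal O(T)),
\]
and a standard Gronwall argument yields \eqref{slowao}. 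The \emph{main obstacle} throughout is the MHD cancellation: every $1/\epsilon$ arising from $\cu\H$ must be defused by a compensating $\epsilon$ coming from either $\hat\H=\epsilon\H$ or $\nabla a(\epsilon p)$, with careful bookkeeping of the Leibniz errors generated by the non-constant coefficients $a(\epsilon p)$ and $b(-\theta)$.
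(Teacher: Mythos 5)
Your proposal is correct and follows essentially the same route as the paper: $\partial^\alpha$-energy estimates with $|\alpha|\le s+1$ on the slow variables $(\hat p,\hat\u,\hat\H,\hat\theta)=(\epsilon p-\theta,\epsilon\u,\epsilon\H,\theta-\bar\theta)$, cancellation of the two $1/\epsilon$ couplings by integration by parts, dissipation from $a(\epsilon p)\dv\Psi(\hat\u)$ and $\bar\kappa\,a(\epsilon p)\dv(b(\theta)\nabla\hat\theta)$, commutators via Lemma \ref{Lb}, and a $\sqrt T$ gain from integrating the $\mathcal S$-terms in time. The one place you genuinely deviate is the Lorentz term, and there the ``main obstacle'' you describe is not actually present: in \eqref{slowg} the quadratic term is $a(\epsilon p)(\cu\H)\times\hat\H$ with $\hat\H=\epsilon\H$, so no hidden $1/\epsilon$ survives. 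The paper keeps the leading part $a(\epsilon p)(\cu\H)\times\hat\H_\alpha$, bounded directly by $C(\mathcal Q)$, and puts the remainder into the commutator $[\partial^\alpha,a(\epsilon p)\cu(\epsilon\H)]\times\H$, which \eqref{comc} bounds by $C(\mathcal Q)(1+\mathcal S)$ because $\|\nabla(\epsilon\H)\|_{H^{s+1}}\le\mathcal S$; equivalently, your ``singular'' piece satisfies $(\cu\partial^\alpha\H)\times\hat\H=(\cu\partial^\alpha\hat\H)\times\H$ and is already of size $C(\mathcal Q)\mathcal S$ in $L^2$, with no curl integration by parts or absorption into the viscosity needed. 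Your triple-product argument is valid, just an avoidable detour. One small imprecision: the commutator $[\partial^\alpha,b(-\theta)]\partial_t(\epsilon\u)$ cannot be bounded ``in terms of $\mathcal Q$'' alone, since $\partial_t(\epsilon\u)$ contains $b(\theta)\nabla p$; as in \eqref{slowbf} one substitutes the momentum equation \eqref{hnabb} and obtains $C(\mathcal Q)(1+\mathcal S)$, which your time integration with the $\sqrt T$ factor still accommodates, so the conclusion \eqref{slowao} follows as you state.
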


\begin{proof}
Let $\alpha$ be a multi-index such that $0\leq|\alpha|\leq s+1$. Set
\begin{align*}
  (\hat{p}_\alpha,\hat{\u}_\alpha,\hat{H}_\alpha,\hat {\theta}_\alpha):=
  \left(\partial^\alpha(\epsilon p-\theta), \partial^\alpha
   (\epsilon \u),\partial^\alpha(\epsilon\H), \partial^\alpha
     (\theta-\bar\theta)\right).
\end{align*}
Then, $\hat{\H}_\alpha$ satisfies \eqref{hhs} and
$(\hat{p}_\alpha,\hat{\u}_\alpha,\hat\theta_\alpha)$ solves
\begin{align}
   &\partial_t \hat{p}_\alpha  +(\u   \cdot\nabla)\hat{p}_\alpha
   +\frac{1}{\epsilon}\dv  \hat{\u}_\alpha =g_1, \label{slowap} \\
&b(-\theta )[\partial_t\hat{\u}_\alpha +(\u  \cdot\nabla)\hat{\u}_\alpha ]
+\frac{1}{\epsilon}(\nabla \hat{p}_\alpha+\nabla \hat{\theta}_\alpha)\nonumber\\
 & \quad \quad \qquad \quad = a( \epsilon p )(\cu \H )\times \hat{\H}_\alpha
 +a( \epsilon p )\dv\Psi (\hat{\u}_\alpha )+g_2, \label{slowaq} \\
&\partial_t \hat\theta_\alpha  +(\u  \cdot\nabla)\hat\theta_\alpha +
\frac{1}{\epsilon}\dv\hat{\u}_\alpha
   =\epsilon a(\epsilon p )[
\bar \nu\, \cu\H  :\cu\hat\H_\alpha
+\Psi (\u  ):\nabla\hat\u_\alpha] \nonumber\\
& \quad \quad \qquad \qquad \qquad \quad \qquad \qquad +\bar \kappa  a(\epsilon p )\dv (b(\theta
)\nabla  \hat \theta_\alpha )+g_3,
  \label{slowas}
\end{align}
with initial data
\begin{align}
(\hat{p}_\alpha ,\hat{\u}_\alpha ,\hat{\H}_\alpha , \hat\theta_\alpha
)|_{t=0} :=\big( & \partial^\alpha(\epsilon p_{\rm in}(x)
- \theta_{\rm in}(x)),\partial^\alpha(\epsilon\u_{\rm in}(x)),\nonumber\\
& \partial^\alpha(\H_{\rm in}(x)), \partial^\alpha(\theta_{\rm in}(x)-\bar
\theta)\big),\label{slowat}
\end{align}
where
\begin{align}
g_1:=&  -[\partial^\alpha, \u]\cdot \nabla(\epsilon p-\theta),\nonumber\\
g_2:=&  -[\partial^\alpha, b(-\theta)]\partial_t(\epsilon \u)
       -[\partial^\alpha, b(-\theta)\u]\cdot \nabla (\epsilon \u)\nonumber\\
    &  +[\partial^\alpha, a(\epsilon p)\cu (\epsilon\H)]\times \H
    +[\partial^\alpha, a(\epsilon p)]\dv \Psi(\epsilon \u), \nonumber\\
g_3:=& -[\partial^\alpha, \u]\cdot \nabla \theta +\bar \nu\,[\partial^\alpha, a(\epsilon p )
 \cu(\epsilon\H)]  :\cu(\epsilon\H)\nonumber\\
& +\epsilon[\partial^\alpha, a(\epsilon p)\Psi (\u
)]:\nabla(\epsilon\u) +\bar \kappa \partial^\alpha\big ( a(\epsilon p )\dv (b(\theta )\nabla
\theta) \big)\nonumber\\
& -\bar \kappa a(\epsilon p) \dv  (b(\theta)\nabla \hat  \theta_\alpha).
 \nonumber 
\end{align}

It follows from Proposition \ref{hPa} and the positivity of $a(\cdot)$ and
$b(\cdot)$ that $a(\cdot)$ and $b(\cdot)$ are bounded
away from $0$ uniformly with respect to $\epsilon$, i.e.
\begin{align}
a(\epsilon p)\geq \underline{a}>0,\;\;\;b(-\theta)\geq
\underline{b}>0.\label{ab1}
\end{align}

The standard  $L^2$-energy estimates for
\eqref{slowap}, \eqref{slowaq} and \eqref{slowas} yield that
\begin{align}
&\frac{1}{2}\frac{\rm d}{{\rm d}t}\big(\|\hat{p}_\alpha\|_{L^2}^2+\langle
b(-\theta)\hat{\u}_\alpha, \hat{\u}_\alpha\rangle+
\|\hat \theta_\alpha\|_{L^2}^2\big)\nonumber\\
&\quad \leq \frac{1}{2}\langle
b_t(\theta)\hat\u_\alpha,\hat\u_\alpha\rangle-\langle (\u
\cdot\nabla)\hat{p}_\alpha,\hat{p}_\alpha\rangle-\langle
b(-\theta)(\u \cdot\nabla)\hat{\u}_\alpha,
\hat{\u}_\alpha\rangle\nonumber\\
&\quad\quad -\langle (\u
\cdot\nabla)\hat \theta_\alpha,\hat \theta_\alpha \rangle+\langle a( \epsilon p )(\cu \H )\times \hat{\H}_\alpha,
\hat\u_\alpha\rangle+\langle a( \epsilon p )\dv\Psi (\hat{\u}_\alpha ),
\hat\u_\alpha\rangle\nonumber\\
&\quad\quad +\langle\epsilon a(\epsilon p )[ \bar \nu\, \cu\H  :\cu\hat\H_\alpha
+\Psi (\u  ):\nabla\hat\u_\alpha],\theta_\alpha\rangle\nonumber\\
&\quad\quad +\langle \bar
\kappa  a(\epsilon p )\dv (b(\theta )\nabla   \theta_\alpha
),\hat\theta_\alpha\rangle+\langle g_1,\hat{p}_\alpha\rangle+\langle
g_2,\hat{u}_\alpha\rangle+\langle g_3,\hat\theta_\alpha\rangle . \label{3.21}
\end{align}
It follows from equation \eqref{hnadd}, and the definitions of $\mathcal{Q}$
and $\mathcal{S}$ that
\begin{align*}
\|b_t(\theta)\|_{L^\infty}\leq
\|b(\theta)\|_{H^s}\|\theta_t\|_{H^s}\leq
C(\mathcal{Q})(1+\mathcal{S}).
\end{align*}
Therefore,
\begin{align*}
\frac{1}{2}\langle b_t(\theta)\hat\u_\alpha,\hat\u_\alpha\rangle
\leq C(\mathcal{Q})(1+\mathcal{S}).
\end{align*}

On the other hand, it is easy to see that
\begin{align*}
-\langle (\u
\cdot\nabla)\hat{p}_\alpha,\hat{p}_\alpha\rangle-\langle
b(-\theta)(\u \cdot\nabla)\hat{\u}_\alpha,
\hat{\u}_\alpha\rangle-\langle (\u
\cdot\nabla)\hat\theta_\alpha,\hat\theta_\alpha \rangle \leq C(\mathcal{Q})
\end{align*}
and \begin{align*} \langle a( \epsilon p )(\cu \H )\times
\hat{\H}_\alpha, \hat\u_\alpha\rangle\leq C(\mathcal{Q}).
\end{align*}
By integration by parts we have
\begin{align}\label{slowt}
-\langle a(\epsilon p_0) \dv \Psi(\hat \u), \hat \u\rangle
= & \int \mu a(\epsilon p_0)\big(|\nabla\hat \u_\alpha|^2+(\mu+\lambda) |\dv \hat \u_\alpha|^2\big){\rm d}x\nonumber\\
& + \mu\langle(\nabla a(\epsilon p)\cdot\nabla)\hat\u_\alpha, \hat \u_\alpha\rangle\nonumber\\
& + (\mu+\lambda)\langle  \nabla a(\epsilon p)\dv \hat \u_\alpha,  \hat \u_\alpha\rangle\nonumber\\
:=\, &\, d_1+d_2+d_3.
\end{align}
Thanks to the assumption that $\bar \mu>0$ and  $2\bar \mu +3\bar
\lambda>0$, there exists a positive constant $\xi_1$, such that
\begin{align}\label{slowu}
  d_1& \geq  {\underline a}\xi\int|\nabla\hat\u_\alpha|^2{\rm d}x ,
\end{align}
while Cauchy-Schwarz's inequality implies
\begin{align}\label{slowv}
 | d_2|+|d_3|\leq C(\mathcal{Q})\mathcal{S}.
\end{align}

Similarly, we can obtain
\begin{align}\label{sloww}
   -\langle \bar \kappa  a(\epsilon p)\dv (b(\theta)\nabla  \hat\theta_\alpha ),\hat\theta_\alpha\rangle
\geq  \bar \kappa {\underline a}\,{\underline b}\|\nabla \hat
\theta_\alpha\|^2_{L^2}-C(\mathcal{Q})\mathcal{S}.
\end{align}
Easily, one has
\begin{align*}
|\langle\epsilon a(\epsilon p )[ \bar \nu\, \cu\H  :\cu\hat\H_\alpha
+\Psi (\u  ):\nabla\hat\u_\alpha],\hat\theta_\alpha\rangle|\leq
C(\mathcal{Q})(1+\mathcal{S}).
\end{align*}

It remains to estimate $\langle g_1,\hat{p}_\alpha\rangle$,
$\langle g_2,\hat{u}_\alpha\rangle$ and $\langle g_3,\hat\theta_\alpha\rangle$
in \eqref{3.21}. First, an application of H\"{o}lder's inequality gives
\begin{align*}
|\langle \hat{p}_\alpha,  g_1\rangle|\leq
C_0\|\hat{p}_\alpha\|_{L^2}\|g_1\|_{L^2},
\end{align*}
where $\|g_1\|_{L^2}$ can be bounded, by using  \eqref{comc}, as follows
\begin{align*}
\|g_1\|_{L^2}=\,&  \|[\partial^\alpha, \u]\cdot \nabla(\epsilon p-\theta)\|_{L^2}\\
    \leq\, & C_0(\|\u\|_{W^{1,\infty}}\|  \nabla(\epsilon p-\theta)\|_{H^{s}}
    +\|\u\|_{H^{s+1}}\|\nabla(\epsilon p-\theta)\|_{L^{\infty}}).
\end{align*}
It follows from the definition of $\mathcal{Q}$ and Sobolev's inequalities that
\begin{align*}
 \|  \nabla(\epsilon p,\theta)\|_{H^{s}}\leq \mathcal{Q},\;\;\;\;\;\|\nabla(\epsilon
 p,\theta)\|_{L^\infty}\leq \mathcal{Q}.
\end{align*}
Therefore, we obtain $\|g_1\|_{L^2}\leq C(\mathcal{Q})(1+\mathcal{S})$, and
\begin{align*}
|\langle  p_\alpha,  g_1\rangle|\leq C(\mathcal{Q})(1+\mathcal{S}).
\end{align*}

Next, we turn to the term $|\langle\u_\alpha ,g_2\rangle|$.
  Due to the equation \eqref{hnabb}, one has
\begin{align}\label{slowbf}
 -[\partial^\alpha, b(-\theta)]\partial_t(\epsilon \u)
 =\,& [\partial^\alpha, b(-\theta)] \big((\u\cdot \nabla)(\epsilon \u)\big)
 + [\partial^\alpha, b(-\theta)]\big(b(\theta)\nabla p\big)\nonumber\\
& -[\partial^\alpha, b(-\theta)]\big(b(\theta) a(\epsilon p)(\cu
\H\times
 (\epsilon\H))\big)\nonumber\\
&-[\partial^\alpha, b(-\theta)]\big(b(\theta) a(\epsilon p)\dv
\Psi(\epsilon \u)\big).
\end{align}
The inequality \eqref{comc} implies that
\begin{align*}
& \left|\left\langle \u_\alpha,[\partial^\alpha, b(-\theta)]
\big((\u\cdot \nabla)(\epsilon \u)\big)\right\rangle\right| \nonumber \\
&\quad \leq C_0\| \u_\alpha\|_{L^2}\|[\partial^\alpha, b(-\theta)]
\big(\u\cdot \nabla(\epsilon \u)\big)\|_{L^2}\nonumber\\
& \quad\leq C(\mathcal{Q})\big(\|b(-\theta)\|_{W^{1,\infty}}\|\u\cdot
\nabla(\epsilon \u)\|_{H^s}+\|b(-\theta)\|_{H^{s+1}}\|\u\cdot
\nabla(\epsilon \u)\|_{L^\infty}\big)\nonumber\\
&\quad \leq C(\mathcal{Q}),
\end{align*}
and
\begin{align*}
& \left|\left\langle \u_\alpha,[\partial^\alpha, b(-\theta)]
\big(b(\theta)\nabla p\big)\right\rangle\right|  \nonumber \\
& \quad\leq C_0\| \u_\alpha\|_{L^2}\|[\partial^\alpha, b(-\theta)]
\big(b(\theta)\nabla p\big)\|_{L^2}\nonumber\\
&\quad \leq C(\mathcal{Q})\big(\|b(-\theta)\|_{W^{1,\infty}}\|b(\theta)\nabla
p\|_{H^s}+\|b(-\theta)\|_{H^{s+1}}\|b(\theta)\nabla p\|_{L^\infty}\big)\nonumber\\
&\quad \leq C(\mathcal{Q})(1+\mathcal{S}).
\end{align*}
The third term on  the right-hand side  of \eqref{slowbf} can be
treated in a similar manner, and we obtain
\begin{align*}
  \left|\left\langle \u_\alpha, [\partial^\alpha, b(-\theta)]\big(b(\theta)
  a(\epsilon p)(\cu \H\times (\epsilon\H))\big)\right\rangle\right|\leq C(\mathcal{Q})(1+\mathcal{S}).
\end{align*}

To bound the last term on the right-hand side of \eqref{slowbf}, we use
\eqref{comc} to deduce that
\begin{align*}
&  \left\langle \u_\alpha,[\partial^\alpha, b(-\theta)]\big(b(\theta)
a(\epsilon p)\dv \Psi(\epsilon \u)\big)\right\rangle\nonumber\\
& \quad\leq C_0\|\u_\alpha\|_{L^2}\|[\partial^\alpha,
b(-\theta)]\big(b(\theta) a(\epsilon p)\dv \Psi(\epsilon
\u)\big)\|_{L^2}\nonumber\\
&\quad \leq C(\mathcal{Q})(\|b(-\theta)\|_{W^{1,\infty}}\|b(\theta)
a(\epsilon p)\dv \Psi(\epsilon \u)\|_{H^s}  \\
&\quad\quad +\|b(-\theta)\|_{H^{s+1}}\|b(\theta) a(\epsilon p)\dv
\Psi(\epsilon \u)\|_{L^\infty})\nonumber\\
& \quad\leq C(\mathcal{Q})(1+\mathcal{S}). \end{align*}
Hence, it holds that
\begin{align*}
\left|\left\langle  \u_\alpha,  g_2\right\rangle\right| \leq
C(\mathcal{Q})(1+\mathcal{S}).
\end{align*}

Since $g_3$ is similar to $g_1$ in structure, we easily get
\begin{align*}
\big|\big\langle \hat\theta_\alpha,  g_3\big\rangle\big|\leq
C(\mathcal{Q})(1+\mathcal{S}).
\end{align*}

Putting all estimates above into \eqref{3.21}, we conclude from    the positivity of
$b(-\theta)$, and the definitions of $\mathcal{O}$, $\mathcal{O}_0$,
$\mathcal{Q}$ and $\mathcal{S}$, that there exists a
constant $l_1>0$, such that for $t\in [0,T]$ and $T=\min\{T_1,1\}$, it holds that
\begin{align*}
&  \sup_{\tau \in [0,t]}\|(\hat{p}_\alpha, \hat\u_\alpha,
\hat \theta_\alpha )(\tau)\|^2_{L^2}
    +l_1\int^t_0  \|  \nabla(\hat\u_\alpha, \hat\theta_\alpha )(\tau)\|^2_{L^2}{\rm d}\tau\nonumber\\
&\quad \leq  C(\mathcal{O}_0)+C(\mathcal{O}(t))t +C(\mathcal{O}(t))\int^t_0\mathcal{S}(\tau){\rm d} \tau\nonumber\\
&\quad \leq  C(\mathcal{O}_0)+C(\mathcal{O}(t))\sqrt{t} \nonumber\\
&\quad \leq  C(\mathcal{O}_0)\exp\{\sqrt{T} C(\mathcal{O}(T)\}.
\end{align*}

Summing up the above estimates for all $\alpha$ with
$0\leq |\alpha|\leq s+1$, we obtain the desired inequality \eqref{slowao}.
\end{proof}
In a way similar to the proof of Lemma \ref{LLb}, we can show that
\begin{lem}\label{LLb1}
 Let $s\geq 4$  be an integer and $(p,\u,\H,\theta)$ be a solution to \eqref{hnaaa}--\eqref{hnadd},
 \eqref{hnas} on $[0,T_1]$.
 Then there exist a constant $l_2>0$ and an increasing function $C(\cdot)$ such that,
 for any $\epsilon \in (0,1]$ and $t\in [0,T]$, $T=\min\{T_1,1\}$, it holds that
\begin{align}\nonumber 
 & \sup_{\tau\in [0,t]}\|(\epsilon^2 q,\epsilon^2\u, \epsilon(\theta-\bar \theta)(\tau)\|_{H^{s+2}}\nonumber\\
 & \qquad   +l_2\bigg \{\int^t_0\|\nabla (\epsilon^2\u,\epsilon\theta)(\tau)\|^2_{H^{s+2}}{\rm d}\tau\bigg\}^{1/2}
 \leq C(\mathcal{O}_0)\exp\{(\sqrt{T})C(\mathcal{O}(T))\}.\nonumber
 \end{align}
\end{lem}

Recalling Lemma \ref{Lc} and the definitions of $\mathcal{Q}$ and $\mathcal{S}$, we find that
\begin{align}
& \|\partial_t(\epsilon p,\epsilon
\u,\epsilon\H,\theta)\|_{H^{s-1}}\leq
C(\mathcal{Q}),\label{t11}\\
&\|\partial_t(\epsilon p,\epsilon
\u,\epsilon\H,\theta)\|_{H^{s}}\leq C(\mathcal{Q})(1+\mathcal{S}),\label{t111} \\
&
\epsilon\|\partial_t(\epsilon p,\epsilon
\u,\epsilon\H,\theta)\|_{H^{s}}\leq C(\mathcal{Q}). \label{t12}
\end{align}

Moreover, it follows easily from Lemmas \ref{HES}--\ref{LLb1} and the equation
\eqref{hnadd} that for some constant $l_3>0$,
\begin{align}
\sup_{\tau\in [0,t]}\|(\epsilon\partial_t\theta)(\tau)\|_{H^s}^2+
l_3\int_0^t\|\nabla(\epsilon\partial_t\theta)(\tau)\|_{H^s}^2{\rm d}\tau\leq
C(\mathcal{O}_0) \exp\big\{\sqrt{T}   C(\mathcal{O}(T)\big\}.  \label{theta1}
\end{align}

\subsection{$H^{s-1}$-estimates on $(\dv \u,\nabla p)$}

To establish the estimates for $p$ and the acoustic part of $\u$, we
first control the term $(\epsilon\partial_t)(p,\u)$. To this end, we start with
an $L^2$-estimate for the linearized system.
For a given state $(p_0,\u_0,\H_0,\theta_0)$, consider
the following linearized system of \eqref{hnaaa}--\eqref{hnadd}:
\begin{align}
   &\partial_t p  +(\u_0  \cdot\nabla)p
   +\frac{1}{\epsilon}\dv(2\u -\bar \kappa  a(\epsilon p_0)b(\theta_0)\nabla \theta ) =\epsilon a(\epsilon p_0)
[\bar \nu \, \cu\H_0 :\cu\H]\nonumber\\
& \quad \quad \qquad \quad\quad \ \
+\epsilon a(\epsilon p_0)\Psi(\u_0 ):\nabla\u  +\bar \kappa a(\epsilon p_0)
b(\theta_0)\nabla p_0  \cdot \nabla  \theta +f_1, \label{slowa} \\
&b(-\theta_0)[\partial_t\u +(\u_0 \cdot\nabla)\u ]+\frac{\nabla p }{\epsilon}
  =a( \epsilon p_0)[(\cu \H_0)\times \H+\dv\Psi (\u )]+f_2, \label{slowb} \\
&\partial_t\H -\cu(\u_0 \times\H )-\bar\nu \Delta\H =f_3,\quad
\dv\H =0,\label{slowc} \\
&\partial_t \theta  +(\u_0 \cdot\nabla)\theta + \dv\u  =\epsilon^2 a(\epsilon p_0)
[\bar \nu\, \cu\H_0 :\cu\H
+\Psi (\u_0 ):\nabla\u ]\nonumber\\
& \quad \quad \qquad \quad\qquad \qquad\quad \ \ +\bar \kappa  a(\epsilon p_0)\dv (b(\theta_0)\nabla  \theta )+f_4,
  \label{slowd}
\end{align}
where we have added the source terms $f_i$ ($1\leq i\leq 4$) on the right-hands sides
of \eqref{slowa}--\eqref{slowd} for later use, and used the following notations:
$$ a(\epsilon p_0): = e^{-\epsilon p_0}, \quad b(\theta_0): =e^{\theta_0}.$$
The system \eqref{slowa}--\eqref{slowd} is supplemented with initial data
\begin{align}
(p ,\u ,\H , \theta )|_{t=0}
=(p_{\rm in}(x),\u_{\rm in}(x),\H_{\rm in}(x), \theta_{\rm in}(x)), \quad x \in \mathbb{R}^3. \label{slowe}
\end{align}

\begin{lem}\label{Lefast}
Let $( p , \u , \H ,  \theta )$ be a solution to the Cauchy problem \eqref{slowa}--\eqref{slowe}
on $[0,\hat T]$. Then there exist a constant $l_4>0$ and an increasing function $C(\cdot)$ such that,
 for any $\epsilon \in (0,1]$ and $t\in [0,T]$, $T=\min\{\hat T,1\}$, it holds that
  \begin{align}\label{fasta}
  & \sup_{\tau \in [0,t]}\|( p ,  \u ,  \H)(\tau)\|^2_{L^2}
    +l_4\int^t_0  \|  \nabla(  \u , \H)(\tau)\|_{L^2}^2{\rm d}\tau\nonumber\\
 &\quad \leq e^{TC(R_0)}\|(  p ,  \u ,  \H )(0)\|^2_{L^2}
 +C(R_0)e^{TC(R_0)}\sup_{\tau \in [0,T]}\|\nabla\theta(\tau)\|^2_{L^2}\nonumber\\
 &\quad\quad +C(R_0)\int_0^T\|\nabla(\epsilon\u,\epsilon\H)(\tau)\|_{L^2}^2{\rm d}\tau+
C(R_0) \int^T_0  \|\nabla \theta(\tau)\|^2_{H^1}{\rm d} \tau\nonumber\\
& \quad\quad + C(R_0)\int^T_0\left\{\|f_1\|^2_{L^2}+\|f_2\|^2_{L^2}+\|f_3\|^2_{L^2}+ \|\nabla
f_4\|_{L^2}^2\right\}(\tau){\rm d}\tau ,
     \end{align}
 where
\begin{align}\label{R00}
 R_0=\sup_{\tau\in [0,T]}\big\{\|\partial_t\theta_0(\tau)\|_{L^\infty},
 \|(p_0, \u_0,\H_0, \theta_0)(\tau)\|_{W^{1,\infty}}\big\}.
 \end{align}
\end{lem}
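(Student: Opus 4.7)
The plan is to carry out a weighted $L^2$ energy estimate on \eqref{slowa}--\eqref{slowd}, with weights chosen so that the singular $1/\epsilon$ couplings between $p$ and $\u$ cancel after integration by parts. I would multiply \eqref{slowa} by $p$, \eqref{slowb} by $2\u$, \eqref{slowc} by $\H$, integrate over $\R^3$, and add. The factor $2$ in front of $\u$ is dictated by the coefficient appearing inside $\dv(2\u-\cdots)$ in \eqref{slowa}: integration by parts converts $\frac{2}{\epsilon}\langle\dv\u,p\rangle$ into $-\frac{2}{\epsilon}\langle\u,\nabla p\rangle$, which cancels the contribution $\frac{2}{\epsilon}\langle\nabla p,\u\rangle$ coming from the momentum equation.

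Once this cancellation is arranged, most remaining terms are routine. The transport contributions $\langle\u_0\cdot\nabla p,p\rangle$, $\langle b(-\theta_0)\u_0\cdot\nabla\u,\u\rangle$, $\langle\u_0\cdot\nabla\H,\H\rangle$, together with the $\partial_t b(-\theta_0)$ piece produced by the time-derivative falling on the weight, are all dominated by $C(R_0)\|(p,\u,\H)\|_{L^2}^2$. The coupling $a(\epsilon p_0)(\cu\H_0)\times\H\cdot\u$ is a zeroth-order pointwise product controlled by $C(R_0)\|(\u,\H)\|_{L^2}^2$. The viscous term $2\langle a(\epsilon p_0)\dv\Psi(\u),\u\rangle$, after integration by parts and using the structural hypotheses $\bar\mu>0$, $2\bar\mu+3\bar\lambda>0$ together with the lower bound \eqref{ab1}, produces a coercive contribution $c\|\nabla\u\|_{L^2}^2$ modulo terms absorbed by Cauchy--Schwarz; the $\H$-equation analogously furnishes $\bar\nu\|\nabla\H\|_{L^2}^2$. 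The source pairings $\langle f_1,p\rangle$, $\langle f_2,\u\rangle$, $\langle f_3,\H\rangle$ are absorbed into $\|(p,\u,\H)\|_{L^2}^2+\|f_i\|_{L^2}^2$.

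The main obstacle is the remaining singular conductive term
\[
I:=-\frac{\bar\kappa}{\epsilon}\langle\dv(a(\epsilon p_0)b(\theta_0)\nabla\theta),p\rangle=\frac{\bar\kappa}{\epsilon}\langle a(\epsilon p_0)b(\theta_0)\nabla\theta,\nabla p\rangle,
\]
which still carries a factor $1/\epsilon$. To cure it I would substitute $\nabla p/\epsilon$ from the momentum equation \eqref{slowb}. The pieces generated by $(\cu\H_0)\times\H$, $(\u_0\cdot\nabla)\u$, and $f_2$ are bounded by $C(R_0)\|\nabla\theta\|_{L^2}(\|(\u,\H)\|_{L^2}+\|f_2\|_{L^2})$; the viscous piece integrates by parts to yield $C(R_0)\|\nabla\theta\|_{H^1}\|\nabla\u\|_{L^2}$, absorbed by a small fraction of the dissipation plus the $\int\|\nabla\theta\|_{H^1}^2$ term on the right of \eqref{fasta}. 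The genuinely delicate piece
\[
J:=\bar\kappa\int a(\epsilon p_0)b(\theta_0)b(-\theta_0)\nabla\theta\cdot\partial_t\u\,dx
\]
I would rewrite as a total time derivative plus a remainder:
\[
J=\frac{d}{dt}\!\left(\bar\kappa\!\int a(\epsilon p_0)b(\theta_0)b(-\theta_0)\nabla\theta\cdot\u\,dx\right)-\bar\kappa\!\int\partial_t[a(\epsilon p_0)b(\theta_0)b(-\theta_0)\nabla\theta]\cdot\u\,dx.
\]
After time integration, the total-derivative term contributes at most $C(R_0)\sup_{\tau\in[0,t]}\|\nabla\theta(\tau)\|_{L^2}^2+\tfrac14\sup_{\tau\in[0,t]}\|\u(\tau)\|_{L^2}^2$; the second summand is absorbed into the left-hand side and the first is exactly the pointwise-in-time $\sup\|\nabla\theta\|_{L^2}^2$ factor on the right of \eqref{fasta}. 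For the remaining term, expanding $\partial_t$ and invoking the temperature equation \eqref{slowd} to bound $\|\partial_t\nabla\theta\|_{L^2}$ in terms of $\|\nabla\theta\|_{H^1}+\|\nabla(\epsilon\u,\epsilon\H)\|_{L^2}+\|\nabla f_4\|_{L^2}$ plus lower order produces the remaining right-hand side terms of \eqref{fasta}.

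Collecting the coercive dissipation $c(\|\nabla\u\|_{L^2}^2+\|\nabla\H\|_{L^2}^2)$ on the left and applying Gronwall's inequality over $[0,T]$ (using $T\leq 1$ so the Gronwall factors collapse into $e^{TC(R_0)}$) yields \eqref{fasta}. The conceptually hardest point is the balance of $I$: simultaneously killing the $1/\epsilon$ via the momentum equation and taming the resulting $\partial_t\u$ by the total-derivative trick is the structural reason why a pointwise-in-time $\sup_\tau\|\nabla\theta(\tau)\|_{L^2}^2$ appears on the right of \eqref{fasta} rather than an integrated quantity.
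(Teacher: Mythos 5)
Your overall strategy is essentially the paper's argument carried out in the original variables: the paper instead introduces the new unknown $\tilde\u=2\u-\bar\kappa\,a(\epsilon p_0)b(\theta_0)\nabla\theta$ and derives a momentum-type equation for $\tilde\u$ by combining \eqref{slowb} with $\tfrac{\bar\kappa}{2}a(\epsilon p_0)$ times the gradient of \eqref{slowd}, so that the only remaining singular coupling is between $\tilde p$ and $\tilde\u$ and cancels in the $L^2$ estimate. Your device of substituting $\nabla p/\epsilon$ from \eqref{slowb} into the singular conductive term and then using the $d/dt$ trick reproduces the same cancellations, and your explanation of why $\sup_\tau\|\nabla\theta(\tau)\|^2_{L^2}$ appears pointwise in time is correct.

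However, the final step contains a genuine gap: the claim that \eqref{slowd} bounds $\|\partial_t\nabla\theta\|_{L^2}$ by $\|\nabla\theta\|_{H^1}+\|\nabla(\epsilon\u,\epsilon\H)\|_{L^2}+\|\nabla f_4\|_{L^2}$ plus lower order is false. Taking the gradient of \eqref{slowd} produces $\nabla\dv\u$ (a second derivative of $\u$ with no $\epsilon$), $\bar\kappa\,\nabla\{a(\epsilon p_0)\dv(b(\theta_0)\nabla\theta)\}$ (third derivatives of $\theta$), and $\epsilon^2\nabla\{a(\epsilon p_0)[\bar\nu\,\cu\H_0:\cu\H+\Psi(\u_0):\nabla\u]\}$ (i.e.\ $\epsilon^2$ times second derivatives of $\u,\H$); none of these is controlled by the quantities you list, nor can they be at the pure $L^2$ level of this lemma. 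The remedy is not to estimate $\partial_t\nabla\theta$ in $L^2$ at all: integrate by parts in $x$ so the remainder becomes $\bar\kappa\int\partial_t\theta\,\dv(a(\epsilon p_0)\u)\,dx$ and only then insert \eqref{slowd} for $\partial_t\theta$ itself. The $-\dv\u$ contribution then reads $-\bar\kappa\int a|\dv\u|^2dx+O(\epsilon)\|\u\|_{L^2}\|\dv\u\|_{L^2}$, and it is essential to use its favorable sign (or retain it as extra dissipation): a crude bound of order $\bar\kappa\|\nabla\u\|_{L^2}^2$ could not be absorbed by the viscous dissipation, since no smallness of $\bar\kappa$ relative to $\underline a\,\bar\mu$ is available. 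The conduction part gives $C(R_0)\|\nabla\theta\|_{H^1}(\|\nabla\u\|_{L^2}+\epsilon\|\u\|_{L^2})$, the $\epsilon^2$ terms give $C(R_0)\|\nabla(\epsilon\u,\epsilon\H)\|_{L^2}^2$ plus a small multiple of the dissipation, and the source is kept either as $\|f_4\|_{L^2}\|\dv(a\u)\|_{L^2}$ or, leaving the gradient on $f_4$, as $\|\nabla f_4\|_{L^2}\|\u\|_{L^2}$, matching \eqref{fasta}. This sign/structure observation is exactly what the paper's change of unknown delivers automatically (its terms $\tfrac{\bar\kappa}{4}a\nabla\dv\tilde\u$ and $\tfrac{\bar\kappa}{4}a\nabla\dv(\bar\kappa ab\nabla\tilde\theta)$, tested against $\tilde\u$, produce $-\tfrac{\bar\kappa}{4}\int a|\dv\tilde\u|^2$ and a $\|\Delta\theta\|_{L^2}$ contribution, whence the $\int_0^T\|\nabla\theta\|^2_{H^1}$ term in \eqref{fasta}); without it your argument does not close as written. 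A minor further slip: the piece $\bar\kappa\langle ab\nabla\theta,\,b(-\theta_0)(\u_0\cdot\nabla)\u\rangle$ is of size $\|\nabla\theta\|_{L^2}\|\nabla\u\|_{L^2}$, not $\|\nabla\theta\|_{L^2}\|\u\|_{L^2}$, unless you first integrate by parts; it is harmless but should be absorbed via Young's inequality into the dissipation and the $\int\|\nabla\theta\|^2_{H^1}$ term.
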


\begin{proof}
Set
 \begin{align*}
 (\tilde p,\tilde \u, \tilde \H,\tilde \theta)
 =(p, 2\u-\bar \kappa a(\epsilon p_0)b(\theta_0)\nabla \theta, \H,\theta).
\end{align*}
 Then $\tilde p$ and $\tilde \H$ satisfy
\begin{align}
\partial_t \tilde  p
 &+(\u_0  \cdot\nabla)\tilde p
   +\frac{1}{\epsilon}\dv \tilde\u   = \epsilon a(\epsilon p_0)
[\bar \nu \, \cu\H_0 :\cu\tilde \H]
+\frac{\epsilon}{2} a(\epsilon p_0)\Psi(\u_0 ):\nabla \tilde\u\nonumber\\
&   + \frac{\epsilon}{2} a(\epsilon p_0)\Psi(\u_0 ):\nabla(\bar
\kappa a(\epsilon p_0)b(\theta_0)\nabla\tilde \theta) +\bar \kappa
a(\epsilon p_0)b(\theta_0)\nabla p_0  \cdot \nabla \tilde \theta
+f_1 \label{fastb}
\end{align}
and
\begin{align}
  &\partial_t\tilde \H -\cu(\u_0 \times\tilde \H)-\bar\nu \Delta\tilde \H =  f_3,\quad
\dv\tilde \H =0, \label{fastbbb}
\end{align}
respectively. One can derive the equation for $\tilde \theta$ by
applying the operator $\nabla$ to \eqref{slowd} to obtain that
\begin{align}  \label{fastc}
  & \partial_t \nabla\tilde  \theta  +(\u_0 \cdot\nabla)\nabla \tilde \theta + \frac12 \nabla \dv\tilde \u
    + \frac12 \nabla \dv\big(\bar \kappa a(\epsilon p_0)b(\theta_0)\nabla  \tilde \theta\big)\nonumber\\
 & \quad= \nabla \big\{\epsilon^2 a(\epsilon p_0)
[\bar \nu\, \cu\H_0 :\cu\tilde\H]\big\}
+\frac12 \nabla\big\{\epsilon^2 a(\epsilon p_0)\Psi (\u_0 ):\nabla\tilde\u \big\}\nonumber\\
&\quad\quad +\frac12 \nabla\big\{\epsilon^2 a(\epsilon p_0)
\Psi (\u_0 ):\nabla(\bar \kappa a(\epsilon p_0)b(\theta_0)\nabla \tilde \theta) \big\}\nonumber\\
&\quad \quad +\nabla\big\{\bar \kappa  a(\epsilon p_0)\dv (b(\theta_0)\nabla
\tilde \theta )\big\} +[\nabla,\u_0]\cdot \nabla \tilde \theta
+\nabla f_4.
\end{align}

Multiplying \eqref{fastc} with $\frac{1 }{2}\bar \kappa a(\epsilon p_0)$, we get
\begin{align}  \label{fastd}
& \frac12 b(-\theta_0)\big\{\partial_t( \bar \kappa a(\epsilon
p_0)b(\theta_0) \nabla\tilde  \theta)
 +(\u_0 \cdot\nabla)[\bar \kappa a(\epsilon p_0)b(\theta_0)\nabla \tilde \theta]\big\}
 \nonumber\\
& \quad= \frac{\bar \kappa}{2} b(-\theta_0)\partial_t\{a(\epsilon p_0)
b(\theta_0)\} \nabla\tilde  \theta
+\frac{\bar \kappa}{2} b(-\theta_0)\big\{\u_0\cdot \nabla
[a(\epsilon p_0)b(\theta_0)] \nabla\tilde  \theta\big\}\nonumber\\
& \quad\quad - \frac{1 }{2} \bar \kappa a(\epsilon p_0)  \nabla \dv \tilde \u
  - \frac{1 }{2} \bar \kappa a(\epsilon p_0)  \nabla \dv
  \big(\bar \kappa a(\epsilon p_0)b(\theta_0)\nabla \tilde \theta\big)\nonumber\\
& \quad\quad  +\frac{1 }{2} \bar \kappa a(\epsilon p_0)
\nabla\big\{\epsilon^2 a(\epsilon p_0) [\bar \nu\, \cu\H_0
:\cu\tilde \H]\big\}\nonumber\\
& \quad\quad  +\frac{1 }{4} \bar \kappa a(\epsilon p_0)  \nabla\big\{\epsilon^2 a(\epsilon p_0)
  \Psi (\u_0 ):\nabla \tilde\u  \big\} \nonumber\\
& \quad\quad +\frac{1 }{4} \bar \kappa a(\epsilon p_0)  \nabla\big\{\epsilon^2
a(\epsilon p_0)
  \Psi (\u_0 ):\nabla(\bar \kappa a(\epsilon p_0)b(\theta_0)\nabla \tilde\theta)\big\}\nonumber\\
& \quad\quad +\frac{1 }{2} \bar \kappa a(\epsilon p_0)
\nabla\big\{\bar \kappa  a(\epsilon p_0)\dv (b(\theta_0)\nabla \tilde \theta )\big\}\nonumber\\
&\quad\quad +\frac{1 }{2} \bar \kappa a(\epsilon p_0) [\nabla,\u_0]\cdot \nabla
\tilde \theta
 +\frac{1 }{2} \bar \kappa a(\epsilon p_0)  \nabla f_4.
\end{align}
Subtracting \eqref{fastd} from \eqref{slowb} yields
\begin{align}\label{faste}
\frac{1}{2}  b(-\theta_0)&[\partial_t\tilde \u+\u_0\cdot \nabla \tilde\u]
+\frac{\nabla \tilde p}{\epsilon}\nonumber\\
=\, & -\frac{\bar \kappa}{2} b(-\theta_0)\partial_t\{a(\epsilon
p_0)b(\theta_0)\} \nabla\tilde  \theta
-\frac{\bar \kappa}{2} b(-\theta_0)\big\{\u_0\cdot \nabla [a(\epsilon p_0)b(\theta_0)]
\nabla\tilde  \theta\big\}\nonumber\\
 &+ \frac{1 }{4} \bar \kappa a(\epsilon p_0)  \nabla \dv \tilde \u
 + \frac{1 }{4} \bar \kappa a(\epsilon p_0)  \nabla \dv (
  \bar \kappa  a(\epsilon p_0)b(\theta_0) \nabla\tilde \theta ) \nonumber\\
&-\frac{1 }{2} \bar \kappa a(\epsilon p_0)  \nabla\big\{\epsilon^2
a(\epsilon p_0)
[\bar \nu\, \cu\H_0 :\cu\tilde \H]\big\}\nonumber\\
&-\frac{1 }{4} \bar \kappa a(\epsilon p_0)  \nabla\big\{\epsilon^2
a(\epsilon p_0) \Psi (\u_0 ):\nabla\tilde \u \big\}\nonumber\\
& -\frac{1 }{4} \bar \kappa a(\epsilon p_0)  \nabla\big\{
\epsilon^2 a(\epsilon p_0)\Psi (\u_0 ):\nabla(
   \bar \kappa  a(\epsilon p_0)b(\theta_0) \nabla\tilde \theta ) \big\}\nonumber\\
&   -\frac{1 }{2} \bar \kappa a(\epsilon p_0)
   \nabla\big\{\bar \kappa  a(\epsilon p_0)\dv (b(\theta_0)
\nabla\tilde\theta)\big\}+ a(\epsilon p_0) [\bar \nu\, \cu\H_0 :\cu\tilde \H]\nonumber\\
 &   +   \frac{1}{2}a(\epsilon p_0) [\Psi (\u_0 ):\nabla\tilde \u ]
 + \frac{1}{2}a(\epsilon p_0) [\Psi (\u_0 ):\nabla (  \bar \kappa
 a(\epsilon p_0)b(\theta_0) \nabla\tilde \theta ) ]\nonumber\\
& -\frac{1 }{2} \bar \kappa a(\epsilon p_0) [\nabla,\u_0]\cdot
\nabla \tilde \theta +a( \epsilon p_0)[(\cu \H_0)\times \tilde\H] \nonumber\\
 & +\frac{1}{2} a( \epsilon p_0)\dv\Psi (\bar \kappa a(\epsilon p_0)b(\theta_0)\nabla \tilde\theta)
 +\frac{1}{2} a( \epsilon p_0)\dv\Psi (\tilde \u )-\frac{1 }{2}
  \bar \kappa a(\epsilon p_0)  \nabla f_4+f_2\nonumber\\
:=\,& \sum^{14}_{i=1}h_i  +\frac{1}{2} a( \epsilon p_0)\dv\Psi (\tilde\u )
 -\frac{1 }{2} \bar \kappa a(\epsilon p_0)  \nabla f_4+f_2.
\end{align}
 Multiplying \eqref{fastb} by $\tilde p$, \eqref{fastbbb} by $\tilde \H$, and
\eqref{faste} by $\tilde \u$ respectively, integrating over  $\mathbb{R}^3$,
and summing up the resulting equations, we deduce that
\begin{align}\label{fastf}
\frac{\rm d}{{\rm d}t} \Big\{\frac12 &\langle  \tilde   p,  \tilde   p\rangle
+\frac14 \langle b(-\theta_0)  \tilde   \u,  \tilde   \u\rangle
+\frac12 \langle  \tilde \H,\tilde\H \rangle \Big\}+ \bar \nu \|\nabla
\tilde\H\|^2_{L^2}
 \nonumber\\
= & -\langle  (\u_0\cdot \nabla) \tilde   p,  \tilde   p \rangle
+\frac{1}{4}\langle \partial_t b(-\theta_0)  \tilde   \u, \tilde
\u\rangle
-\frac12 \langle   b(-\theta_0)(\u_0\cdot \nabla) \tilde   \u,  \tilde   \u\rangle\nonumber\\
&+\langle   \epsilon a(\epsilon p_0)
[\bar \nu \, \cu\H_0 :\cu\tilde \H], \tilde p\rangle +\frac{\epsilon}{2}\langle
a(\epsilon p_0)\Psi(\u_0 ):\nabla \tilde\u, \tilde p\rangle\nonumber\\
&   + \frac{\epsilon}{2}\langle a^2(\epsilon
p_0)b(\theta_0)\Psi(\u_0 ):\nabla(\nabla \tilde\theta), \tilde
p\rangle
+\langle \bar \kappa a(\epsilon p_0)b(\theta_0)\nabla p_0  \cdot \nabla \tilde \theta, \tilde p\rangle\nonumber\\
& +\sum^{14}_{i=1}\left\langle h_i, \tilde   \u \right\rangle
+\frac12 \langle a(\epsilon p_0) \dv \Psi(\tilde\u),\tilde\u\rangle\nonumber\\
&  - \frac{1 }{2}\left \langle \bar \kappa a(\epsilon p_0)  \nabla
f_4, \tilde \u \right\rangle +\left\langle f_2, \tilde \u
\right\rangle + \langle f_3, \tilde \H  \rangle +\langle f_1,\tilde p\rangle,
\end{align}
where the singular terms have been canceled out.

 Now, the terms on the right-hand side of \eqref{fastf} can be estimated as follows.
First, it follows from the regularity of $(p_0,\u_0,\H_0,\theta_0)$, a partial integration
and Cauchy-Schwarz's inequality that
 \begin{align}
 &
\frac{1}{4}\left|\langle \partial_t b(-\theta_0) \tilde \u,\tilde
\u\rangle\right| \leq \frac{1}{4}\| \partial_t b(-\theta_0)\|_{L^\infty}\|\tilde
 \u\|^2_{L^2}\leq C(R_0)\|\tilde \u\|^2_{L^2},\nonumber\\ 
&|\langle  (\u_0\cdot \nabla)\tilde p, \tilde p \rangle|
=\frac{1}{2}\left|\int(\dv \u_0)|\tilde p|^2dx\right|
\leq  C(R_0)\|\tilde p\|_{L^2}^2,\nonumber\\ 
&\frac12 |\langle   b(-\theta_0)(\u_0\cdot \nabla)\tilde \u, \tilde
\u\rangle| \leq  C(R_0)\|\tilde \u\|_{L^2}^2,\nonumber\\ 
& |\langle   \epsilon a(\epsilon p_0)
[\bar \nu \, \cu\H_0 :\cu\tilde \H], \tilde p\rangle|
\leq C(R_0)(\|\epsilon\nabla \tilde\H\|^2_{L^2}+\|\tilde p\|^2_{L^2}),\nonumber\\ 
& \frac{\epsilon}{2}|\langle a(\epsilon p_0)\Psi(\u_0 ):\nabla
\tilde\u, \tilde p\rangle|
\leq C(R_0)(\|\epsilon \nabla \tilde\u\|^2_{L^2}+\|\tilde p\|^2_{L^2}), \nonumber\\ 
& \frac{\epsilon}{2}|\langle a^2(\epsilon p_0)b(\theta_0)\Psi(\u_0 )
:\nabla(\nabla \tilde\theta), \tilde p\rangle|
 \leq C(R_0)\|\tilde p\|^2_{L^2}   +G_1(\epsilon p_0,\theta_0)\sum_{|\alpha|=2}
\|\partial^\alpha(\epsilon  \tilde\theta)\|^2_{L^2}, \nonumber \\
& |\langle \bar \kappa a(\epsilon p_0)b(\theta_0)\nabla p_0  \cdot
\nabla \tilde \theta, \tilde p\rangle| \leq   C(R_0)(\| \nabla
\tilde\theta\|^2_{L^2}+\|\tilde p\|^2_{L^2}),\nonumber 
\end{align}
where $G_1(\cdot,\cdot)$ is a smooth function. Similarly, one can bound the
terms involving $h_i$ in (\ref{faste}) as follows.
\begin{align} 
\sum^{14}_{i=1} |\langle h_i, \tilde \u\rangle| \leq
& \frac{\bar
\nu}{8}\|\nabla\tilde \H\|^2_{L_2}+\frac{\underline a \bar
\mu}{8}\|\nabla\tilde \u\|^2_{L_2}
+\frac{\underline a \bar \nu}{8}\|\dv\tilde \u\|^2_{L_2}\nonumber\\
&+ C(R_0) \|\tilde \u\|^2_{L_2}+ C(R_0) \|\nabla (\epsilon \tilde
\u,\tilde \theta)\|^2_{L_2} + G_2(\epsilon p_0,\theta_0)\|\Delta\tilde \theta\|^2_{L_2},\nonumber
\end{align}
where $G_2(\cdot,\cdot)$ is a smooth function.

For the dissipative term
$\frac12 \langle a(\epsilon p_0) \dv \Psi( \tilde \u),\tilde\u\rangle $,
we can employ arguments similar to those used in the estimate of the slow motion in
\eqref{slowt}--\eqref{slowv} to obtain that
\begin{align}\nonumber 
-\frac12\langle a(\epsilon p_0) \dv \Psi(\hat \u), \hat
\u\rangle\geq \frac{\underline a \bar \mu}{4}  (\|\nabla \hat
\u\|^2_{L^2}+  \|\dv \hat \u\|^2_{L^2})
 -C(R_0)\|\hat \u\|^2_{L^2}.
\end{align}

  Finally, putting all estimates above into \eqref{fastf} and applying
Cauchy-Schwarz's and Gronwall's inequalities, we get \eqref{fasta}.
\end{proof}

In the next lemma we utilize Lemma \ref{Lefast} to control
$\big((\epsilon \partial_t)p, (\epsilon \partial_t)\u,(\epsilon \partial_t) \H\big)$.
\begin{lem}\label{LLfast}
Let $s\geq 4$  be an integer and $(p,\u,\H,\theta)$ be the solution to the Cauchy problem
\eqref{hnaaa}--\eqref{hnadd}, \eqref{hnas} on $[0,T_1]$. Set
 $$(p_\beta, \u_\beta,\H_\beta,\theta_\beta):=\partial^\beta\big((\epsilon\partial_t)p,
 (\epsilon\partial_t)\u,(\epsilon\partial_t)\H ,(\epsilon\partial_t)\theta\big),  $$
where $0\leq |\beta|\leq s-1$. Then there exist a constant $l_5>0$ and an increasing function $C(\cdot)$
such that, for any $\epsilon \in (0,1]$ and $t\in [0,T]$, $T=\min\{ T_1,1\}$, it holds that
\begin{align} \label{fastba}
&\sup_{\tau \in [0,t]}\|(p_\beta, \u_\beta,\H_\beta)(\tau)\|^2_{L^2}
\nonumber\\
&\qquad + l_5\int^t_0  \|  \nabla( \u_\beta,\H_\beta)(\tau)\|_{L^2}^2{\rm d}\tau \leq C(\mathcal{O}_0)\exp\big\{\sqrt{T} C(\mathcal{O}(T))\big\}.
\end{align}
\end{lem}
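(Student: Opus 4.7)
The plan is to reduce the estimate for $(p_\beta,\u_\beta,\H_\beta,\theta_\beta)$ to a direct application of Lemma \ref{Lefast}. Applying the operator $\partial^\beta(\epsilon\partial_t)$ to the system \eqref{hnaaa}--\eqref{hnadd}, I would show that $(p_\beta,\u_\beta,\H_\beta,\theta_\beta)$ satisfies precisely a system of the linearized form \eqref{slowa}--\eqref{slowd}, with frozen-coefficient state $(p_0,\u_0,\H_0,\theta_0)=(p,\u,\H,\theta)$ and inhomogeneities $f_1,f_2,f_3,f_4$ that collect every commutator produced when $\partial^\beta(\epsilon\partial_t)$ crosses the nonlinear terms. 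The crucial structural observation is that $(\epsilon\partial_t)$ and $\partial^\beta$ commute with the singular factor $1/\epsilon$ in $\eqref{hnaaa}$--$\eqref{hnabb}$, so no extra $1/\epsilon$ powers leak into the $f_i$; every potentially singular contribution is either already $O(1)$ or carries an accompanying factor of $\epsilon$ from the small parameters in the source terms of the original system.

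Next, by Sobolev embedding ($s\geq 4$) and \eqref{t11}, one has
\[
R_0 \leq \sup_{\tau\in[0,T]}\big\{\|\partial_t\theta\|_{L^\infty}+\|(p,\u,\H,\theta)\|_{W^{1,\infty}}\big\}\leq C(\mathcal{O}(T)),
\]
so Lemma \ref{Lefast} is applicable. Each commutator entering $f_i$ has the generic form $[\partial^\beta(\epsilon\partial_t),F(\epsilon p,\theta)]G$ or $[\partial^\beta,F(\epsilon p,\theta)]G$, where $F$ is smooth and $G$ is one of $\nabla p,\nabla\u,\H,\nabla\H,|\cu\H|^2,(\cu\H)\times\H,\Psi(\u)\!:\!\nabla\u$, or $\dv(b(\theta)\nabla\theta)$. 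The first type is handled by Lemma \ref{Lc} combined with the temporal bounds \eqref{t11}--\eqref{t12}, the second by Lemma \ref{Lb}; in both cases one obtains $\|f_i\|_{L^2}\leq C(\mathcal{Q})(1+\mathcal{S})$. The $\nabla\theta_\beta$ and $\|\nabla\theta_\beta\|_{H^1}$ terms on the right-hand side of \eqref{fasta} are absorbed using Lemma \ref{LLb1} and the parabolic estimate \eqref{theta1}. The initial-data term $\|(p_\beta,\u_\beta,\H_\beta)(0)\|_{L^2}$ is controlled by expressing $\epsilon\partial_t(p,\u,\H,\theta)|_{t=0}$ through the equations evaluated at $t=0$ and using \eqref{initial}, giving a bound by $C(\mathcal{O}_0)$.

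Assembling these ingredients, Lemma \ref{Lefast} yields
\[
\sup_{\tau\in[0,t]}\|(p_\beta,\u_\beta,\H_\beta)(\tau)\|^2_{L^2}+l_5\int^t_0\|\nabla(\u_\beta,\H_\beta)\|^2_{L^2}d\tau\leq C(\mathcal{O}_0)+C(\mathcal{O}(T))\!\int_0^T(1+\mathcal{S}(\tau))^2d\tau,
\]
and since $\int_0^T\mathcal{S}(\tau)d\tau\leq \sqrt{T}\,\mathcal{O}(T)$ by Cauchy--Schwarz and the definition of $\mathcal{O}(T)$, summing over $1\leq|\beta|\leq s-1$ and absorbing constants into an exponential yields \eqref{fastba}.

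The main obstacle is not any single estimate but the sheer bookkeeping: the strong magnetic coupling $(\cu\H)\times\H$ in \eqref{hnabb}, the dissipation $|\cu\H|^2$ in \eqref{hnaaa}, and the Joule-type term $\bar\kappa a(\epsilon p)\dv(b(\theta)\nabla\theta)$ in \eqref{hnadd} generate many commutators when differentiated by $\partial^\beta(\epsilon\partial_t)$, and one must verify case by case that every singular $1/\epsilon$ is cancelled and every resulting source is controlled by $C(\mathcal{Q})(1+\mathcal{S})$ uniformly in $\epsilon$; only under this uniform $\epsilon$-control does the linearized estimate of Lemma \ref{Lefast} actually produce the desired $\epsilon$-independent bound.
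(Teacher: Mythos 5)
Your overall strategy is the same as the paper's: apply $\partial^\beta(\epsilon\partial_t)$ to \eqref{hnaaa}--\eqref{hnadd}, view the result as the linearized system \eqref{slowa}--\eqref{slowd} with coefficients frozen at the solution itself and sources given by commutators, invoke Lemma \ref{Lefast} with $R\leq C(\mathcal{O}(T))$, bound the sources by $C(\mathcal{Q})(1+\mathcal{S})$ via Lemmas \ref{Lb}--\ref{Lc} and \eqref{t11}--\eqref{t12}, and absorb the $\nabla\theta_\beta$ terms using \eqref{theta1} and the $H^{s+1}$-- and $H^{s+2}$--estimates. There is, however, one concrete step your case analysis does not cover, and it is exactly the delicate one. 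The source coming from the momentum equation contains the commutator $[\partial^\beta(\epsilon\partial_t),b(-\theta)]\partial_t\u$, whose argument is $\partial_t\u$ and hence is \emph{not} of your ``generic form'' (your list of admissible $G$'s contains only spatial quantities). A direct application of Lemma \ref{Lc} with $g=\partial_t\u$ would require control of $\|\partial_t^2\u\|_{H^{s-2}}$, which is available nowhere (and $\partial_t\u$ itself is of size $\epsilon^{-1}$), so this step fails as written. The paper's proof resolves it by first substituting the momentum equation \eqref{hnabb} to rewrite $\partial_t\u$ in terms of $(\u\cdot\nabla)\u$, $\epsilon^{-1}b(\theta)\nabla p$ and the magnetic/viscous terms, see \eqref{fastbl}, and only then estimating the resulting commutators, so that the factor $\epsilon$ gained in \eqref{comd} offsets the singular $\epsilon^{-1}$. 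Your assertion that ``no extra $1/\epsilon$ powers leak into the $f_i$'' is precisely the statement that needs this substitution to be checked; without it the claim is unsupported for this term.

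A secondary point on the final bookkeeping: the linear estimate \eqref{fasta} involves the \emph{squares} of the sources, so what you must control is $\int_0^T(1+\mathcal{S})^2\,d\tau$; Cauchy--Schwarz applied to $\int_0^T\mathcal{S}\,d\tau$ does not produce the $\sqrt{T}$ factor you invoke there, and one only has $\int_0^T\mathcal{S}^2\,d\tau\leq \mathcal{O}(T)^2$ from the definition of $\|\cdot\|_{s,\epsilon}$. The paper's own proof handles this term in the same terse way (inserting $\|\tilde g_i\|_{L^2}\leq C(\mathcal{Q})(1+\mathcal{S})$ into \eqref{fastbq} and absorbing the result into the stated bound), so this is not a deviation from the paper, but the justification you give for the $\sqrt{T}$ gain at that point is not the right one.
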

\begin{proof} An application of the operator $\partial^\beta(\epsilon\partial_t)\,(0\leq |\beta|\leq s-1)$ to
the system \eqref{hnaaa}--\eqref{hnadd} leads to
\begin{align}
   &\partial_t p_\beta  +(\u  \cdot\nabla)p_\beta
   +\frac{1}{\epsilon}\dv\big(2\u_\beta -\bar \kappa  a(\epsilon p )b(\theta )\nabla \theta_\beta \big)
   =\epsilon a(\epsilon p )[\bar \nu \, \cu\H :\cu\H_\beta] \nonumber\\
& \quad \quad  +\epsilon a(\epsilon p )\Psi(\u ):\nabla\u_\beta
 +\bar \kappa a(\epsilon p )b(\theta )\nabla p  \cdot \nabla  \theta_\beta + \tilde g_1, \label{fastbb} \\
&b(-\theta )[\partial_t\u_\beta +(\u \cdot\nabla)\u_\beta ]+\frac{\nabla p_\beta }{\epsilon}
  =a( \epsilon p )[(\cu \H )\times \H_\beta +\dv\Psi (\u_\beta )]+\tilde g_2, \label{fastbc} \\
&\partial_t\H_\beta -\cu(\u \times\H_\beta )-\bar\nu \Delta\H_\beta =\tilde g_3,\quad
\dv\H_\beta =0,\label{fastbd} \\
&\partial_t \theta_\beta  +(\u \cdot\nabla)\theta_\beta + \dv\u_\beta =\epsilon^2 a(\epsilon p )
[\bar \nu\, \cu\H :\cu\H_\beta]\nonumber\\
& \quad \quad
+\epsilon^2 a(\epsilon p )\Psi (\u ):\nabla\u_\beta   +\bar \kappa  a(\epsilon p )
\dv (b(\theta )\nabla  \theta_\beta )+\tilde g_4,    \label{fastbe}
\end{align}
where
\begin{align*}
\tilde g_1:=\,&  -[\partial^\beta(\epsilon \partial_t), \u]\cdot \nabla  p
+\frac{1}{\epsilon}[\partial^\beta(\epsilon \partial_t),(\bar \kappa  a(\epsilon p )
 b(\theta ))] \Delta  \theta \nonumber\\
 & +\frac{1}{\epsilon} [\partial^\beta(\epsilon \partial_t), \nabla (\bar \kappa
 a(\epsilon p )  b(\theta ))]\cdot \nabla \theta
 +\epsilon\bar \nu\,[ \partial^\beta(\epsilon \partial_t), a(\epsilon p )
 \cu\H]  :\cu\H\nonumber\\
& +\epsilon[\partial^\beta(\epsilon \partial_t), a(\epsilon p)\Psi (\u  )]:\nabla\u
+[\partial^\beta(\epsilon \partial_t),\bar \kappa a(\epsilon p )b(\theta )\nabla p]\cdot\nabla\theta,
           \\
\tilde g_2:=\,&  -[\partial^\beta(\epsilon \partial_t), b(-\theta)]\partial_t \u
       -[\partial^\beta(\epsilon \partial_t), b(-\theta)\u]\cdot \nabla   \u,\nonumber\\
    &  +[\partial^\beta(\epsilon \partial_t), a(\epsilon p)\cu \H]\times   \H
    +[\partial^\beta(\epsilon \partial_t), a(\epsilon p)]\dv \Psi( \u), \\
\tilde g_3:=\,& \,  \partial^\beta(\epsilon \partial_t)\big(\cu(\u\times  \H)\big)
-\cu(\u\times \H_\beta), \\
\tilde g_4:=\,& -[\partial^\beta(\epsilon \partial_t), \u]\cdot \nabla \theta
+\epsilon^2\bar \nu\,[ \partial^\beta(\epsilon \partial_t), a(\epsilon p )
 \cu\H]  :\cu \H \nonumber\\
& +\epsilon^2[\partial^\beta(\epsilon \partial_t), a(\epsilon p)\Psi (\u  )]:\nabla \u \nonumber\\
&  +\bar \kappa \partial^\beta(\epsilon \partial_t)\big ( a(\epsilon p )\dv (b(\theta )
\nabla   \theta_\alpha) \big)
-\bar \kappa a(\epsilon) \dv  (b(\theta )\nabla   \theta_\alpha ).    
\end{align*}
It follows from the linear estimate \eqref{fasta} that, for some
$l_4>0$,
    \begin{align}  \label{fastbq}
 \sup_{\tau \in [0,t]} &\|(p_\beta,  \u_\beta ,  \H_\beta)(\tau)\|^2_{L^2}
    +l_4\int^t_0  \|  \nabla( \u_\beta ,   \H_\beta)(\tau)\|_{L^2}^2{\rm d}\tau\nonumber\\
 & \leq   e^{TC(R)}\|(p_\beta, \tilde \u_\beta ,  \H_\beta)(0)\|^2_{L^2}
 +C(R)e^{TC(R)}\sup_{\tau \in [0,T]}\|\nabla\theta_\beta(\tau)\|^2_{L^2}\nonumber\\
& \quad +TC(R)\sup_{\tau \in [0,T]}\|\nabla(  \epsilon \u_\beta, \epsilon
\H_\beta)(\tau)\|^2_{L^2}+C(R)\int^T_0  \|\nabla  \theta_\beta(\tau)\|^2_{H^1}{\rm d}\tau \nonumber\\
& \quad + C(R)\int^T_0\left \{\|\tilde{g}_1\|^2_{L^2}+\|\tilde{g}_2\|^2_{L^2}
+\|\tilde{g}_3\|^2_{L^2} + \|\nabla \tilde{g}_4\|_{L^2}^2\right\}(\tau){\rm d}\tau  ,
\end{align}
 where $R$ is defined as $R_0$ in  \eqref{R00} with $(p_0,\u_0,\H_0,
 \theta_0)$ replaced with $(p,\u,\H,\theta)$.

Now we control the norms $\|\tilde g_1\|_{L^2}^2$, $\|\tilde g_2\|_{L^2}^2$,
$\|\tilde g_3\|_{L^2}^2$, and  $\|\nabla \tilde g_4\|_{L^2}^2$.
The first term of $\tilde g_1$ can be bounded as follows.
 \begin{align} 
\left\|[\partial^\beta(\epsilon \partial_t), \u]\cdot \nabla  p\right\|_{L^2}
\leq\, & \epsilon C_0 (\|\u\|_{H^{s-1}}\|(\epsilon \partial_t) \nabla p\|_{H^{s-2}}
    +\|(\epsilon\partial_t) \u\|_{H^{s-1}}\|\nabla p \|_{H^{s-1}})\nonumber\\
    \leq\, & C(\mathcal{Q}). \nonumber
\end{align}
Similarly, the second term of $\tilde g_1$ admits the following boundedness:
\begin{align}  
  & \frac{1}{\epsilon}\|[\partial^\beta(\epsilon \partial_t),(\bar \kappa
  a(\epsilon p )  b(\theta ))] \Delta  \theta\|\nonumber\\
& \quad \leq C_0 \big(\|  a(\epsilon p )  b(\theta )\|_{H^{s-1}}\|\partial_t\Delta \theta \|_{H^{s-2}}
+ \|\partial_t(  a(\epsilon p )  b(\theta ))\|_{H^{s-1}} \|\Delta  \theta\|_{H^{s-1}}\big)
\nonumber\\
& \quad \leq C(\mathcal{Q})(1+\mathcal{S}). \nonumber
\end{align}
The other four terms in $\tilde g_1$ can be treated similarly and hence
can be bounded from above by $C(\mathcal{Q})(1+\mathcal{S})$.

For the first term of $\tilde g_2$, one has by the equation \eqref{hnabb} that
\begin{align}  \label{fastbl}
 [\partial^\beta(\epsilon \partial_t), b(-\theta)]\partial_t \u=
 & [\partial^\beta(\epsilon \partial_t), b(-\theta)]\{(\u \cdot\nabla)\u\}\nonumber\\
 &+\frac{1}{\epsilon} [\partial^\beta(\epsilon \partial_t), b(-\theta)]\{b^{-1}(-\theta_0)\nabla p \}\nonumber\\
& - [\partial^\beta(\epsilon \partial_t), b(-\theta)]\{b^{-1}(-\theta)a( \epsilon p)[(\cu \H_0)\times \H]\}\nonumber\\
& - [\partial^\beta(\epsilon \partial_t), b(-\theta)]\{b^{-1}(-\theta)a( \epsilon p)\dv\Psi (\u ) \} .
\end{align}
Note that the terms on the right-hand side of \eqref{fastbl} have similar structure as that of $\tilde g_1$.
Thus, we see that
\begin{align} \nonumber
\|[\partial^\beta(\epsilon \partial_t), b(-\theta)]\partial_t \u\|_{L^2}\leq   C(\mathcal{Q})(1+\mathcal{S}).
\end{align}
Similarly, the other four terms of $\tilde g_2$ can be bounded from above by $C(\mathcal{Q})(1+\mathcal{S})$.

Next, by the identity \eqref{vb}, one can rewrite $\tilde g_3$ as
 \begin{align}\nonumber
\tilde g_3  
     =& -[\partial^\beta(\epsilon \partial_t), \dv \u] \H
     - [\partial^\beta(\epsilon \partial_t),\u]\cdot \nabla  \H
     +\sum_{i=1}^3 [\partial^\beta(\epsilon \partial_t), \nabla \u_i]   \H .
\end{align}
Following a process similar to that in the estimates of $\tilde g_1$, one gets
\begin{align} \nonumber
\|\tilde g_3\|_{L^2}\leq   C(\mathcal{Q})(1+\mathcal{S}).
\end{align}
And analogously,
\begin{align} \nonumber
\|\nabla \tilde g_4\|_{L^2}\leq   C(\mathcal{Q})(1+\mathcal{S}).
\end{align}

We proceed to control the other terms on the right-hand side of
\eqref{fastbq}. It follows from \eqref{theta1} that
$$ C(R)e^{TC(R)}\sup_{\tau \in
 [0,T]}\|\nabla\theta_\beta(\tau)\|^2_{L^2}\leq
 C(\mathcal{O}(T)) \exp\big\{\sqrt{T}   C(\mathcal{O}(T)\big\} $$
and
$$ \int^T_0  \|\Delta   \theta_\beta(\tau)\|^2_{L_2}{\rm d}\tau \leq
\int^T_0  \|(\epsilon \partial_t)  \theta (\tau)\|^2_{H^{s+1}}{\rm d}\tau
 \leq  C(\mathcal{O}_0) \exp\big\{\sqrt{T}  C(\mathcal{O}(T))\big\}. $$
Thanks to \eqref{t12}, one has
\begin{align}
 TC(R)\sup_{\tau \in [0,T]}\|\nabla(  \epsilon \u_\beta, \epsilon  \H_\beta)(\tau)\|^2_{L^2}
& \leq   TC(\mathcal{O}(T))\sup_{\tau \in [0,T]}\|(\epsilon
\partial_t)( \epsilon \u, \epsilon \H)(\tau)\|^2_{H^{s}} \nonumber \\
&\leq TC(\mathcal{O}(T)).\nonumber%
\end{align}
Then, the desired inequality \eqref{fastba} follows from the above estimates and
 the inequality \eqref{fastbq}.
\end{proof}

Now we are in a position to estimate the
Sobolev norm of $(\dv \u,\nabla p)$ based on Lemma \ref{LLfast}.
\begin{lem}\label{LLfastb}
Let $s\geq 4$  be an integer and $(p,\u,\H,\theta)$ be the solution to the Cauchy problem
\eqref{hnaaa}--\eqref{hnadd}, \eqref{hnas} on $[0,T_1]$.
Then there exist a constant $l_6>0$ and an increasing function $C(\cdot)$ such that,
 for any $\epsilon \in (0,1]$ and $t\in [0,T_1]$, $T=\min\{ T_1,1\}$, it holds that
\begin{align} \label{fastca}
\sup_{\tau \in [0,t]}\left\{\|p(\tau)\|_{H^s}+\|\dv \u(\tau)\|_{H^{s-1}}\right\}
& + l_6\int^t_0\left\{\|\nabla p\|^2_{H^s}+\|\nabla\dv\u\|^2_{H^{s-1}}\right\}(\tau){\rm d}\tau\nonumber\\
&  \leq C(\mathcal{O}_0)\exp\big\{(\sqrt{T}+\epsilon)C(\mathcal{O}(T))\big\}.
\end{align}
\end{lem}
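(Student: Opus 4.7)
The strategy is to read $\nabla p$ directly from the momentum equation \eqref{hnabb} and $\dv\u$ from an appropriate combination of the pressure equation \eqref{hnaaa} and the temperature equation \eqref{hnadd}, in each case expressing the desired quantity in terms of objects whose Sobolev norms have already been controlled in the preceding lemmas. The only missing low-frequency piece, $\|p\|_{L^2}$, will be recovered from a separate $L^2$ energy argument based on Lemma \ref{Lefast}.

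First, I would multiply \eqref{hnabb} by $\epsilon$ to obtain the pointwise identity
\[
\nabla p = -\epsilon\,b(-\theta)\big[\partial_t\u + (\u\cdot\nabla)\u\big] + \epsilon\,a(\epsilon p)\big[(\cu\H)\times\H + \dv\Psi(\u)\big],
\]
in which every term on the right carries an explicit $\epsilon$ factor. Differentiating up to order $s-1$ for the $L^\infty_t$ bound and up to order $s$ for the $L^2_t$ bound, and using the commutator estimate of Lemma \ref{Lb} to move derivatives onto the coefficients $b(-\theta)$ and $a(\epsilon p)$, I would estimate: the leading term $\epsilon\partial_t\u = (\epsilon\partial_t)\u$ via Lemma \ref{LLfast} (which supplies both the $L^\infty_t H^{s-1}$ and the $L^2_t H^{s}$ bound); the nonlinear terms $\epsilon(\u\cdot\nabla)\u$ and $\epsilon(\cu\H)\times\H$ by Moser-type product estimates through $\mathcal{Q}$, where the explicit $\epsilon$ prefactor produces the $\epsilon$-contribution in the target exponent; and the viscous term $\epsilon\,\dv\Psi(\u) = \bar\mu\Delta(\epsilon\u) + (\bar\mu+\bar\lambda)\nabla\dv(\epsilon\u)$ by invoking $\epsilon\u\in L^\infty_t H^{s+1}\cap L^2_t H^{s+2}$, which follows from Lemma \ref{LLb}. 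This yields the desired bounds on $\|\nabla p\|_{H^{s-1}}$ in $L^\infty_t$ and on $\|\nabla p\|^2_{H^s}$ in $L^2_t$.

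Next, to control $\dv\u$, I would combine \eqref{hnaaa} with \eqref{hnadd} so as to eliminate one singular $\tfrac{1}{\epsilon}\dv\u$ factor against the other. A short rearrangement gives
\[
2\dv\u = \bar\kappa\,e^{-\epsilon p}\dv(e^\theta\nabla\theta) - \epsilon\big(\partial_t p + \u\cdot\nabla p\big) + \epsilon R,
\]
where $R$ collects bounded remainders involving $|\cu\H|^2$, $\Psi(\u){:}\nabla\u$, and the product $\nabla p\cdot\nabla\theta$. Taking $H^{s-1}$ and $H^s$ norms and using $\theta-\bar\theta \in L^\infty_t H^{s+1}\cap L^2_t H^{s+2}$ from Lemma \ref{LLb} (which controls $\dv(e^\theta\nabla\theta)$ in $L^\infty_t H^{s-1}\cap L^2_t H^s$), together with $(\epsilon\partial_t)p\in L^\infty_t H^{s-1}\cap L^2_t H^s$ from Lemma \ref{LLfast}, I would obtain the $L^\infty_t H^{s-1}$ bound on $\dv\u$ and, equivalently, the $L^2_t H^{s-1}$ bound on $\nabla\dv\u$.

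Finally, because on $\mathbb{R}^3$ the gradient bound does not by itself control $\|p\|_{L^2}$, I would recover the missing low-frequency component by applying Lemma \ref{Lefast} to the original nonlinear system: set $(\tilde p,\tilde\u,\tilde\H) = (p,\u,\H)$, $(p_0,\u_0,\H_0,\theta_0) = (p,\u,\H,\theta)$ and $f_i = 0$; the singular $1/\epsilon$ terms cancel by the same structural reason as in the proof of that lemma, and Gronwall's inequality then yields $\|p\|_{L^\infty_t L^2} \leq C(\mathcal{O}_0)\exp\{TC(\mathcal{O}(T))\}$, which combines with the gradient estimate to give $\|p\|_{L^\infty_t H^s}$. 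The principal obstacle I anticipate is the bookkeeping of the $\epsilon$ factors needed to produce the exponent $(\sqrt{T}+\epsilon)C(\mathcal{O}(T))$ rather than a pure $\sqrt{T}C(\mathcal{O}(T))$: bounded nonlinear terms with an explicit $\epsilon$ prefactor must be charged directly to the $\epsilon$-part, whereas the $(\epsilon\partial_t)$-derivatives, the heat-diffusion contributions, and the viscous $\nabla^2(\epsilon\u)$ contributions must be absorbed through their $L^2_t$ norms via Cauchy-Schwarz to supply the $\sqrt{T}$-part, and care must be taken in the commutator estimates of Lemmas \ref{Lb} and \ref{Lc} that these two sources do not contaminate one another.
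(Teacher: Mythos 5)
Your identities for $\nabla p$ and $\dv\u$ are exactly the ones the paper uses (they are \eqref{fastcc} and \eqref{fastcb}), and your handling of the $L^\infty_t$ bounds, of the $L^2_t$ bound for $\nabla p$ (using $(\epsilon\partial_t)\u\in L^2_tH^s$ from Lemma \ref{LLfast} and $\epsilon\u\in L^2_tH^{s+2}$ from Lemma \ref{LLb}), and of the low-frequency part $\|p\|_{L^2}$ via Lemma \ref{Lefast} applied with the background state equal to the solution itself, are all sound. The gap is in the last piece of \eqref{fastca}, the bound on $\int_0^t\|\nabla\dv\u\|^2_{H^{s-1}}d\tau$, i.e.\ on $\dv\u$ in $L^2_tH^{s}$. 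Your argument charges this to ``$(\epsilon\partial_t)p\in L^\infty_tH^{s-1}\cap L^2_tH^{s}$ from Lemma \ref{LLfast}'', but Lemma \ref{LLfast} does not provide an $L^2_tH^{s}$ bound for $(\epsilon\partial_t)p$: the dissipative terms there control only $\nabla\u_\beta$ and $\nabla\H_\beta$ (viscosity and magnetic diffusion), and there is no smoothing in $p$, so only $\sup_\tau\|p_\beta\|_{L^2}$, i.e.\ $(\epsilon\partial_t)p\in L^\infty_tH^{s-1}$, is available. Also, your phrase ``the $L^\infty_tH^{s-1}$ bound on $\dv\u$ and, equivalently, the $L^2_tH^{s-1}$ bound on $\nabla\dv\u$'' conflates two different statements: the second is one derivative higher and does not follow from the first. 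Taking $H^{s}$ norms of your identity for $2\dv\u$ therefore fails at the term $(\epsilon\partial_t)p$, which is precisely the obstruction.

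The paper resolves this with an extra idea you are missing: instead of estimating the right-hand side in $H^{s}$ directly, it applies $\partial^\alpha$ ($1\le|\alpha|\le s$) to \eqref{fastcb}, pairs against $\partial^\alpha\dv\u$, and treats the dangerous term by integration by parts in time and space,
\begin{align*}
-\frac{1}{2}\int^T_0 \langle \partial^\alpha(\epsilon \partial_t) p, \partial^\alpha \dv \u\rangle\, d\tau
 = -\frac{1}{2} \langle \partial^\alpha p, \epsilon\,\partial^\alpha \dv (\epsilon\u)\rangle\Big|^T_0
 +\frac{1}{2}\int^T_0\langle \partial^\alpha\nabla p,\partial^\alpha (\epsilon\partial_t) \u\rangle\, d\tau,
\end{align*}
so that only $\nabla p\in L^2_tH^{s}$ (already established), $(\epsilon\partial_t)\u\in L^2_tH^{s}$ (Lemma \ref{LLfast}), and the boundary term controlled by $\sup_\tau\|p\|_{H^s}\|\epsilon\u\|_{H^{s+1}}$ are needed; the remaining terms $\Xi$ are absorbed by Cauchy--Schwarz with a $\tfrac14\int\|\partial^\alpha\dv\u\|^2_{L^2}$ on the right. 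To repair your proof you must either reproduce this duality/integration-by-parts step or supply some other source of an $L^2_tH^{s}$ bound for $(\epsilon\partial_t)p$, which the preceding lemmas do not give.
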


\begin{proof}
Rewrite the equations \eqref{hnaaa} and \eqref{hnabb} as
\begin{align}
\dv \u = & -\frac{1}{2}(\epsilon \partial_t) p -\frac{\epsilon}{2}(\u  \cdot\nabla)p+
 \frac{1}{2}\dv(\bar \kappa  a(\epsilon p)b(\theta)\nabla \theta)+\frac{\epsilon^2\bar \nu }{2} a(\epsilon p_0)
|\cu\H|^2\nonumber\\
& +\frac{\epsilon^2}{2} a(\epsilon p)\Psi(\u):\nabla\u
 +\frac{\epsilon\bar \kappa}{2} a(\epsilon p)b(\theta)\nabla p  \cdot \nabla  \theta, \label{fastcb} \\
{\nabla p }= & -  b(-\theta) (\epsilon\partial_t)\u -{\epsilon} b(-\theta)(\u \cdot\nabla)\u \nonumber\\
 & +\epsilon a( \epsilon p)[(\cu \H)\times \H] +\epsilon a( \epsilon p)\dv\Psi (\u ). \label{fastcc}
\end{align}
Then,
\begin{align}
\|\dv \u\|_{H^{s-1}}\leq\,  &C_0\|(\epsilon \partial_t) p\|_{H^{s-1}}
+C_0 \epsilon\,\|\u\|_{H^{s-1}}\|\nabla p\|_{H^{s-1}}\nonumber\\
& +C_0 \|\dv(\bar \kappa  a(\epsilon p)b(\theta)\nabla \theta)\|_{H^{s-1}}+C_0 \| a(\epsilon p_0)\|_{L^\infty}
\|\epsilon\,\cu\H\|_{H^{s-1}}^2\nonumber\\
& +C_0\| a(\epsilon p)\|_{L^\infty}\|\Psi(\epsilon\u):(\epsilon\nabla\u)\|_{H^{s-1}} \nonumber\\
& +C_0\| a(\epsilon p)b(\theta)\|_{L^\infty}\|(\epsilon\nabla p)\|_{H^{s-1}}\| \nabla \theta\|_{H^{s-1}}.
\label{fastcd}
\end{align}
It follows from Lemmas \ref{EP1}--\ref{LLb1} and \ref{LLfast}, and the inequalities
\eqref{t11}--\eqref{theta1} that
\begin{align*}
\|(\epsilon \partial_t) p\|_{H^{s-1}}\leq\, &  C(\mathcal{O}_0)\exp\big\{(\sqrt{T}+\epsilon)C(\mathcal{O}(T))\big\},
\\
 \epsilon\,\|\u\|_{H^{s-1}}\|\nabla p\|_{H^{s-1}}\leq\, & \epsilon C(\mathcal{O}), \\
 \|\dv(\bar \kappa  a(\epsilon p)b(\theta)\nabla \theta)\|_{H^{s-1}}
  \leq\, & C_0 \|\Delta \theta\|_{H^{s-1}}+C_0\|\nabla \theta\|_{H^{s-1}}\\
   \leq\, & C(\mathcal{O}_0)\exp\big\{(\sqrt{T}+\epsilon)C(\mathcal{O}(T))\big\}, \\
\|\epsilon\,\cu\H\|_{H^{s-1}}\leq\, & C(\mathcal{O}_0)\exp\big\{(\sqrt{T}+\epsilon)C(\mathcal{O}(T))\big\}, \\
  \|\Psi(\epsilon\u):(\epsilon\nabla\u)\|_{H^{s-1}}
  \leq\, & C(\mathcal{O}_0)\exp\big\{(\sqrt{T}+\epsilon)C(\mathcal{O}(T))\big\}, \\
\|(\epsilon\nabla p)\|_{H^{s-1}}\| \nabla  \theta\|_{H^{s-1}}
\leq\, & C(\mathcal{O}_0)\exp\big\{(\sqrt{T}+\epsilon)C(\mathcal{O}(T))\big\}. \nonumber
\end{align*}
These bounds together with \eqref{fastcd} imply that
\begin{align}\nonumber 
\sup_{\tau \in [0,t]} \|\dv \u\|(\tau)\|_{H^{s-1}}
\leq   C(\mathcal{O}_0)\exp\big\{(\sqrt{T}+\epsilon)C(\mathcal{O}(T))\big\}.
\end{align}
Similar arguments applying to the equation \eqref{fastcc} for $\nabla p$ yield
\begin{align}\label{fastcf}
  \sup_{\tau \in [0,t]} \|p\|_{H^s}
   + l_6\int^t_0  \|\nabla p(\tau)\|^2_{H^s}{\rm d}\tau
\leq      C(\mathcal{O}_0)\exp\big\{(\sqrt{T}+\epsilon)C(\mathcal{O}(T))\big\}
\end{align}
for some positive constant $l_6>0$.

To obtain the desired inequality \eqref{fastca}, we shall establish the following estimate
 \begin{align} \label{fastcg}
   \int^T_0 \| \nabla \dv \u(\tau)\|^2_{H^{s-1}}{\rm d}\tau
\leq      C(\mathcal{O}_0)\exp\big\{(\sqrt{T}+\epsilon)C(\mathcal{O}(T))\big\}.
\end{align}
In fact, for any multi-index $\alpha$ satisfying $1\leq |\alpha|\leq s$, one can apply
the operator $\partial^\alpha$ to \eqref{fastcb} and then take the inner product
with $\partial^\alpha \dv \u$ to obtain
\begin{align}\label{fastch}
\int^T_0 \|\partial^\alpha \dv \u(\tau)\|^2_{L^2}{\rm d}\tau
= &  -\frac{1}{2}\int^T_0 \langle \partial^\alpha(\epsilon \partial_t) p,
\partial^\alpha \dv \u\rangle (\tau){\rm d}\tau\nonumber\\
& +\int^T_0\langle \Xi,\partial^\alpha \dv \u\rangle (\tau){\rm d}\tau,
\end{align}
where
\begin{align*}
\Xi : =& -\frac{\epsilon}{2}(\u  \cdot\nabla)p+
 \frac{1}{2}\dv(\bar \kappa  a(\epsilon p)b(\theta)\nabla \theta)+\frac{\epsilon^2\bar \nu }{2} a(\epsilon p_0)
|\cu\H|^2\nonumber\\
& +\frac{\epsilon^2}{2} a(\epsilon p)\Psi(\u):\nabla\u
 +\frac{\epsilon\bar \kappa}{2} a(\epsilon p)b(\theta)\nabla p  \cdot \nabla  \theta.
\end{align*}
It thus follows from \eqref{fastba} and similar arguments to those for \eqref{fastcf} that, for all $1\leq |\alpha|\leq s$,
\begin{align*}
\int^T_0 \|\partial^\alpha\Xi (\tau)\|^2_{L^2}{\rm d}\tau
\leq C(\mathcal{O}_0)\exp\big\{(\sqrt{T}+\epsilon)C(\mathcal{O}(T))\big\},
\end{align*}
whence,
\begin{align}
&\int^T_0\left|\langle \Xi,\partial^\alpha \dv \u\rangle\right| (\tau){\rm d}\tau\nonumber\\
& \quad\leq C(\mathcal{O}_0)\exp\big\{(\sqrt{T}+\epsilon)C(\mathcal{O}(T))\big\}
\bigg\{\int^T_0\| \partial^\alpha \dv \u\|^2_{L^2} (\tau){\rm d}\tau \bigg\}^{1/2} \nonumber\\
& \quad\leq C(\mathcal{O}_0)\exp\big\{(\sqrt{T}+\epsilon)C(\mathcal{O}(T))\big\}
+\frac14 \int^T_0\| \partial^\alpha \dv \u(\tau)\|^2_{L^2}{\rm d}\tau . \nonumber
\end{align}

For the first term on the right-hand side of \eqref{fastch}, one gets by integration by parts that
\begin{align*}
 -\frac{1}{2}\int^T_0 \langle \partial^\alpha(\epsilon \partial_t) p, \partial^\alpha \dv \u\rangle (\tau){\rm d}\tau
 = \,& -\frac{1}{2} \langle \partial^\alpha p, \epsilon\partial^\alpha \dv (\epsilon\u)\rangle\Big|^T_0 \nonumber\\
 & +\frac{1}{2}\int^T_0\langle \partial^\alpha\nabla p,\partial^\alpha (\epsilon\partial_t) \u\rangle (\tau){\rm d}\tau.
\end{align*}
By virtue of the estimate \eqref{slowao} on $(\epsilon q,\epsilon\u,\theta-\bar \theta)$ and \eqref{fastcf},
we find that
\begin{align}
\left| \frac{1}{2} \langle \partial^\alpha p, \epsilon\partial^\alpha \dv (\epsilon\u)\rangle\Big|^T_0\right|
\leq\, & \sup_{\tau\in [0,T]}\{\|p(\tau)\|_{H^s}\|\epsilon\u(\tau)\|_{H^{s+1}}\}\nonumber\\
\leq\, & C(\mathcal{O}_0)\exp\big\{(\sqrt{T}+\epsilon)C(\mathcal{O}(T))\big\},  \nonumber\\
  \frac{1}{2} \bigg|\int^T_0 \langle \partial^\alpha \nabla p,
  \partial^\alpha (\epsilon \partial_t)  \u\rangle (\tau){\rm d}\tau\bigg|
\leq \, &\frac{1}{2} \bigg\{\int^T_0 \|\partial^\alpha\nabla p (\tau)\|^2_{L^2}{\rm d}\tau\bigg\}^{1/2}\nonumber\\
 & \times \bigg\{\int^T_0 \|\partial^\alpha (\epsilon \partial_t)  \u(\tau)\|^2_{L^2}{\rm d}\tau\bigg\}^{1/2}
 \nonumber\\
\leq\, & C(\mathcal{O}_0)\exp\big\{(\sqrt{T}+\epsilon)C(\mathcal{O}(T))\big\}. \nonumber
\end{align}
These bounds, together with \eqref{fastch}, yield the desired estimate \eqref{fastcg} after summing $\alpha$ over  $1\leq |\alpha|\leq s$.
 This completes the proof.
 \end{proof}

\subsection{$H^{s-1}$-estimate on $\cu \u$ }

The another key point to obtain a uniform bound for $\u$ is the following estimate on $\cu\u$.
\begin{lem}\label{LLT}
Let $s\geq 4$ be an integer and  $(p,\u,\H,\theta)$ be the solution to the Cauchy problem
\eqref{hnaaa}--\eqref{hnadd}, \eqref{hnas} on $[0,T_1]$.
Then there exist a constant $l_7>0$ and an increasing function $C(\cdot)$ such that,
 for any $\epsilon \in (0,1]$ and $t\in [0,T_1]$, $T=\min\{ T_1,1\}$, it holds that
\begin{align} \label{exta}
& \sup_{\tau \in [0,t]}\left\{\|\cu(b({-\theta})\u)(\tau)\|^2_{H^{s-1}}
+\|\cu \H (\tau)\|^2_{H^{s-1}}\right\}\nonumber\\
& \quad + l_7\int^t_0\left\{\|\nabla\cu(b({-\theta})\u)\|^2_{H^{s-1}}
+\|\nabla\cu \H (\tau)\|^2_{H^{s-1}}\right\}(\tau){\rm d}\tau  \nonumber\\
& \qquad  \leq   C(\mathcal{O}_0)\exp\big\{(\sqrt{T}+\epsilon)C(\mathcal{O}(T))\big\}.
\end{align}
\end{lem}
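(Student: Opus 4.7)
The plan is to apply the $\cu$ operator to the momentum equation in a form that kills the singular pressure gradient $\nabla p/\epsilon$, reducing the bound on $\cu(b(-\theta)\u)$ to a parabolic energy estimate at the $H^{s-1}$ level. Specifically, rewrite \eqref{hnabb} as
\begin{align*}
\partial_t(b(-\theta)\u)+b(-\theta)(\partial_t\theta)\u+b(-\theta)(\u\cdot\nabla)\u+\frac{\nabla p}{\epsilon}
=a(\epsilon p)\big[(\cu\H)\times\H+\dv\Psi(\u)\big],
\end{align*}
apply $\cu$, and use $\cu\nabla=0$ to eliminate the $O(\epsilon^{-1})$ term. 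Set $W:=\cu(b(-\theta)\u)$. Since $\dv\Psi(\u)=\bar\mu\Delta\u+(\bar\mu+\bar\lambda)\nabla\dv\u$ and $\cu\nabla\dv\u=0$, the principal piece of $\cu[a(\epsilon p)\dv\Psi(\u)]$ is $\bar\mu\,a(\epsilon p)\Delta\cu\u$, which, after commuting $\cu$ with the scalar $b(-\theta)$ and using \eqref{vbc}, supplies a coercive dissipation of the form $\bar\mu\,\underline a\,\underline b\,\|\nabla W\|_{L^2}^2$ up to controllable lower-order errors. For the magnetic field, apply $\cu$ directly to \eqref{hnacc} and use $\dv\H=0$ together with $\cu\cu=-\Delta$ (on divergence-free fields) to obtain a heat-type equation for $\cu\H$ with source $\cu\cu(\u\times\H)$.

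Next, for each multi-index $\alpha$ with $|\alpha|\leq s-1$, apply $\partial^\alpha$ to the resulting evolution equations for $W$ and $\cu\H$, take $L^2$ inner products with $\partial^\alpha W$ and $\partial^\alpha\cu\H$, and sum. Integration by parts against the transport terms $(\u\cdot\nabla)W$ and $(\u\cdot\nabla)\cu\H$ produces harmless contributions proportional to $\|\dv\u\|_{L^\infty}\leq C(\mathcal{Q})$. The resulting dissipation $\bar\mu\,\underline a\,\underline b\,\|\nabla\partial^\alpha W\|_{L^2}^2+\bar\nu\,\|\nabla\partial^\alpha\cu\H\|_{L^2}^2$ on the left-hand side will absorb the top-order coupling terms on the right. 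All commutators $[\partial^\alpha,\u]$, $[\partial^\alpha,b(-\theta)]$, $[\partial^\alpha,a(\epsilon p)]$ acting on first- or second-order derivatives are handled by Lemma \ref{Lb}, yielding bounds of the form $C(\mathcal{Q})(1+\mathcal{S})$ exactly as in the proofs of Lemmas \ref{LLb} and \ref{LLfast}.

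The main obstacle is the magnetic coupling at top order: the term $\cu[a(\epsilon p)(\cu\H)\times\H]$ in the $W$-equation contributes second derivatives of $\H$ at the $H^{s-1}$ level, and symmetrically $\cu\cu(\u\times\H)$ in the $\cu\H$-equation brings second derivatives of $\u$. Both must be controlled jointly. One rewrites these via the identities \eqref{vbc}--\eqref{vb} so that the worst contributions have the schematic form $\H\cdot\nabla^2\H$ and $\H\cdot\nabla^2\u$, which are split: the factor $\H$ is put in $L^\infty$ (bounded by $\|\H\|_{H^s}\leq C(\mathcal{Q})$ via Lemma \ref{HES}), while the second-derivative factor is bounded in $L^2$ by $\|\nabla\cu\H\|_{L^2}$ or $\|\nabla W\|_{L^2}$ plus lower-order pieces involving $\nabla(\epsilon\u),\nabla\theta$ that are already under control by Lemmas \ref{EP1}--\ref{LLb1}; a Cauchy--Schwarz with small parameter then absorbs these into the joint dissipation. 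After summing over $|\alpha|\leq s-1$, integrating on $[0,t]$, estimating $\int_0^t\mathcal{S}\,d\tau\leq\sqrt{T}(\int_0^t\mathcal{S}^2d\tau)^{1/2}$, and invoking Gronwall together with the already-established bounds on $(\H,\theta)$, $(\epsilon p,\epsilon\u)$ and $(\epsilon p,\epsilon\u,\theta-\bar\theta)$ in $H^{s+1}_\epsilon$, the inequality \eqref{exta} follows with a suitable constant $l_7>0$.
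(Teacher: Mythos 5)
Your strategy coincides with the paper's: apply $\cu$ to the momentum equation written for $b(-\theta)\u$ so that the singular term $\nabla p/\epsilon$ disappears, and then run a joint $H^{s-1}$ energy estimate for $W=\cu(b(-\theta)\u)$ and $\cu\H$, with the viscous and magnetic dissipations on the left, commutator estimates for the lower-order terms, and the device $\int_0^t\mathcal{S}\,d\tau\leq\sqrt{t}\,\big(\int_0^t\mathcal{S}^2d\tau\big)^{1/2}$ at the end. The gap lies in your treatment of the top-order coupling in the $\cu\H$-equation. After applying $\partial^\alpha$ with $|\alpha|=s-1$, the source $\partial^\alpha\cu[\cu(\u\times\H)]$ contains terms of the schematic form $\H*\nabla^{s+1}\u$, and you claim the factor $\nabla^{s+1}\u$ is bounded in $L^2$ by $\|\nabla W\|_{L^2}$ plus lower-order pieces involving $\nabla(\epsilon\u),\nabla\theta$, and is then absorbed by a small-parameter Cauchy--Schwarz into the joint dissipation. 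This does not hold as stated. First, $\u$ is not divergence free, so recovering $\|\u\|_{H^{s+1}}$ from curl information also requires $\|\dv\u\|_{H^{s}}$ at the same order; that quantity is not part of this lemma's dissipation (it is only controlled in $L^2_t$ by Lemma \ref{LLfastb}, which you do not invoke), and the quantities from Lemmas \ref{EP1}--\ref{LLb1} carry an extra factor $\epsilon$ and cannot substitute for it. Second, even for the curl part, the conversion $\cu\u=b(\theta)W\pm b(\theta)\nabla b(-\theta)\times\u$ introduces solution-dependent coefficients, so the comparison of $\|\nabla^{s+1}\u\|_{L^2}$ with $\|\nabla W\|_{H^{s-1}}$ only holds with a constant $C(\mathcal{Q})$; a fixed small parameter then leaves a term $\eta\,C(\mathcal{Q})\|\nabla W\|_{H^{s-1}}^2$ that cannot be absorbed by the fixed coercivity constant coming from $\bar\mu$, $\underline{a}$, $\underline{b}$.

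The term is controllable, but by a different mechanism. The paper integrates by parts in the pairing $\langle\partial^\alpha\cu[\cu(\u\times\H)],\partial^\alpha\cu\H\rangle$, moving one curl across and using $\cu\cu=\nabla\dv-\Delta$ with $\dv\H=0$, so it becomes $\langle\partial^\alpha\cu(\u\times\H),\partial^\alpha\Delta\H\rangle$: only $s$ derivatives of $\u$ ever appear ($\|\cu(\u\times\H)\|_{H^{s-1}}\leq C(\mathcal{Q})$), and the factor $\|\Delta\H\|_{H^{s-1}}\leq\|\H\|_{H^{s+1}}$ is absorbed into the magnetic dissipation with a universal constant, precisely because $\dv\H=0$ makes the comparison of $\|\H\|_{H^{s+1}}$ with $\|\nabla\cu\H\|_{H^{s-1}}+\|\H\|_{H^s}$ constant-coefficient. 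Alternatively, your $L^\infty$--$L^2$ splitting can be saved by renouncing absorption: bound the offending term by $C(\mathcal{Q})\|\u\|_{H^{s+1}}\leq C(\mathcal{Q})(1+\mathcal{S})$, which is linear in $\mathcal{S}$ and hence yields the admissible $\sqrt{T}\,C(\mathcal{O}(T))$ contribution after time integration. Your handling of the other coupling term, $\cu\{a(\epsilon p)(\cu\H)\times\H\}$ in the $W$-equation, is sound, since there the top factor is $\nabla^{s+1}\H$ and the div-free structure of $\H$ gives a universal-constant comparison with the magnetic dissipation; the paper treats it instead by moving the curl onto $\partial^\alpha W$ and absorbing into the viscous dissipation, and either variant works.
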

\begin{proof} Applying the operator \emph{curl} to the equations \eqref{hnabb} and \eqref {hnacc},
using the identities \eqref{vbd} and \eqref{vbc}, and the fact that $\cu \nabla =0$, one infers that
\begin{align}
& \partial_t (\cu(b(-\theta)\u)) +(\u \cdot\nabla)(\cu(b(-\theta)\u))\nonumber\\
 & \quad  \  \  = \cu\{a( \epsilon p) (\cu \H)\times \H\}
 +\bar\mu \dv\{a(\epsilon p)b(\theta)\nabla (\cu(b(-\theta)\u))\}+ \Upsilon_1, \label{extb} \\
&\partial_t(\cu \H) -\cu[\cu(\u\times\H )]-\bar\nu \Delta(\cu\H) = 0,\label{extc}
\end{align}
where $\Upsilon_1$ is defined by
\begin{align*}
\Upsilon_1:=\, &\bar \mu  \dv\big(a(\epsilon p)(\nabla b(\theta))\otimes \cu(b(-\theta)\u)\big)
-\bar \mu \nabla a(\epsilon p)\cdot \nabla (b(\theta)\cu(b(-\theta)\u))\nonumber\\
& - \bar \mu a(\epsilon p)\Delta((\nabla b(\theta))\times (b(-\theta)\u))
-\nabla a(\epsilon p)\times ( \bar \mu \Delta \u+(\bar \mu +\bar \lambda)\nabla \dv \u)\nonumber\\
& +\cu(b(-\theta)\u \partial_t \theta)+[\cu, \u]\cdot \nabla (b(-\theta)\u)
+\cu(b(-\theta)\u(\u\cdot\nabla \theta)).
\end{align*}

For any multi-index $\alpha$ satisfying $0\leq |\alpha|\leq s-1$, we apply
the operator $\partial^\alpha$ to \eqref{extb} and \eqref{extc} to obtain
\begin{align}
&   \partial_t \partial^\alpha (\cu(b(-\theta)\u)) +(\u \cdot\nabla)[\partial^\alpha(\cu(b(-\theta)\u))]\nonumber\\
 & \quad \quad   \  \  =  \partial^\alpha \cu\{a( \epsilon p) (\cu \H)\times \H\}\nonumber\\
 & \quad  \quad\quad  \  \   +\bar\mu \dv\{a(\epsilon p)b(\theta)\nabla [ \partial^\alpha (\cu(b(-\theta)\u))]\}
 +  \partial^\alpha \Upsilon_1 + \Upsilon_2  , \label{extd} \\
&  \partial_t \partial^\alpha (\cu \H) -\partial^\alpha\cu[\cu(\u\times\H )]-\bar\nu \Delta(\cu\H) = 0,\label{exte}
\end{align}
where
\begin{align*}
 \Upsilon_2: = & -[\partial^\alpha, \u]\cdot \nabla[\partial^\alpha(\cu(b(-\theta)\u))]\nonumber\\
 & -[\partial^\alpha, \dv (a(\epsilon p)b(\theta))]\nabla[\partial^\alpha(\cu(b(-\theta)\u))].
\end{align*}
Multiplying \eqref{extd} by $\partial^\alpha (\cu(b(-\theta)\u))$ and \eqref{exte} by  $\partial^\alpha (\cu \H)$
 respectively, summing up, and integrating  over $\mathbb{R}^3$, we deduce that
\begin{align}\label{extf}
\frac{1}{2}\frac{\rm d}{{\rm d}t} &\{\|\partial^\alpha (\cu(b(-\theta)\u))\|^2_{L^2}
+\|\partial^\alpha (\cu \H)\|^2_{L^2} \} +\bar \nu \|\partial^\alpha (\cu \H)\|^2_{L^2}\nonumber\\
&
+ \langle a(\epsilon p)b(\theta)\nabla [ \partial^\alpha (\cu(b(-\theta)\u))],\nabla
[ \partial^\alpha (\cu(b(-\theta)\u))]  \rangle \nonumber\\
=\,&-\langle (\u \cdot\nabla)[\partial^\alpha(\cu(b(-\theta)\u))],
\partial^\alpha (\cu(b(-\theta)\u))\rangle  \nonumber\\
& - \langle  \partial^\alpha \cu\{a( \epsilon p) (\cu \H)\times \H\},
\partial^\alpha (\cu(b(-\theta)\u))\rangle \nonumber\\
& + \langle \partial^\alpha\cu[\cu(\u\times\H )],  \partial^\alpha (\cu \H)\rangle
+\langle  \partial^\alpha \Upsilon_1 + \Upsilon_2,
\partial^\alpha (\cu(b(-\theta)\u))\rangle \nonumber\\
:=\, &  \mathcal{J}_1+ \mathcal{J}_2+ \mathcal{J}_3+ \mathcal{J}_4,
\end{align}
where $\mathcal{J}_i$ ($i=1,\cdots,4$) will be bounded as follows.

An integration by parts leads to
\begin{align*}
|\mathcal{J}_1|\leq \|\dv\u\|_{L^\infty}\|\nabla [ \partial^\alpha (\cu(b(-\theta)\u))]\|^2_{L^2}.
\end{align*}
By virtue of \eqref{va}, the Cauchy-Schwarz's inequality and Lemma \ref{Lb},
 the term $\mathcal{J}_2$ can be bounded as follows.
\begin{align*}
|\mathcal{J}_2 |\leq\, & | \langle  \partial^\alpha\{a( \epsilon p) (\cu \H)\times \H\},
\partial^\alpha \cu (\cu(b(-\theta)\u))\rangle|\nonumber\\
\leq \,& |\partial^\alpha\{a( \epsilon p) (\cu \H)\times \H\}\|_{L^2}
\|\partial^\alpha \nabla (\cu(b(-\theta)\u))\|_{L^2}\nonumber\\
\leq\,&  \eta_1\|\partial^\alpha \nabla (\cu(b(-\theta)\u))\|_{L^2}^2\nonumber\\
& + C\{\|\cu \H\|^2_{L^\infty}\|a( \epsilon p)\H\|_{H^{s-1}}^2
+\|a( \epsilon p)\H\|^2_{L^\infty}\|\cu \H\|^2_{H^{s-1}}\},
\end{align*}
where $\eta_{1}>0$ is a sufficiently small constant independent
of $\epsilon$.

If we integrate by parts, make use of \eqref{va} and the fact that
$\cu \cu \mathbf{a} =\nabla \,\dv \,\mathbf{a} -\Delta \mathbf{a}$ and
$\dv \H=0$, we see that the term  $\mathcal{J}_3$ can be rewritten as
\begin{align*}
\mathcal{J}_3   =   \left\langle  \partial^\alpha   \cu(\u\times \H),
  \partial^\alpha\Delta \H\right  \rangle ,\nonumber
 \end{align*}
which, together with the Moser-type inequality, implies that
\begin{align*}
|  \mathcal{J}_3|\leq C(\mathcal{S})+\eta_{2} \|\H^\epsilon(\tau)\|^2_{s+1},
\end{align*}
where $\eta_{2}>0$ is a sufficiently small constant independent of $\epsilon$.

 To handle $\mathcal{J}_4$, we note that the leading order terms in $\Upsilon_1$ are of third-order
  in $\theta$  and of
second-order in $\u$, and the leading order terms in $\Upsilon_2$ are of order $s+1$ in $\u$
and of order $s + 1$ in $(\epsilon p, \theta)$. Then it follows that
\begin{align*}
|\mathcal{J}_4| \leq\, & C_0 (  \|\partial^\alpha \Upsilon_1\|_{L^2}
+ \|\Upsilon_2\|_{L^2})\|  \partial^\alpha (\cu(b(-\theta)\u))\|_{L^2} \\
 \leq\,& C(\mathcal{S})\|  \partial^\alpha (\cu(b(-\theta)\u))\|_{L^2}.
 \end{align*}

Putting the above estimates into the \eqref{extf}, choosing
$\eta_1$ and $\eta_2$ sufficient small, summing over $\alpha$ for $0\leq |\alpha|\leq s-1$,
and then integrating the result on $[0,t]$,  we conclude
\begin{align*}
\sup_{\tau \in [0,t]}& \left\{\|\cu(b({-\theta})\u)(\tau)\|^2_{H^{s-1}}
+\|\cu \H (\tau)\|^2_{H^{s-1}}\right\}\nonumber\\
& \quad + l_7\int^t_0\left\{\|\nabla\cu(b({-\theta})\u)\|^2_{H^{s-1}}
+\|\nabla\cu \H \|^2_{H^{s-1}}\right\}(\tau){\rm d}\tau  \nonumber\\
& \leq C_0 \big \{\|\cu(b({-\theta})\u)(0)\|^2_{H^{s-1}}+\|\cu \H (0)\|^2_{H^{s-1}}\big\}
\nonumber\\
& \quad + C(\mathcal{O}_0)\exp\big\{(\sqrt{T}+\epsilon)C(\mathcal{O}(T))\big\}\nonumber\\
& \leq  C(\mathcal{O}_0)\exp\big\{(\sqrt{T}+\epsilon)C(\mathcal{O}(T))\big\}.
\end{align*}
\end{proof}

\begin{proof} [Proof of  Proposition \ref{hPb}]
Proposition \ref{hPb} follows directly from Lemmas  \ref{HES}, \ref{LLb}, \ref{LLfastb} and \ref{LLT},  and some arguments in
the proof of Lemma 6.28 in \cite{A06}.
\end{proof}

Once Proposition \ref{hPb} is established, the existence part of Theorem \ref{main} can be proved
 by directly applying the same arguments as in \cite{A06,MS01}, and hence we omit the details here.


\section{Decay of the local energy and zero Mach number limit}

In this section, we shall prove the convergence part of Theorem \ref{main} by modifying the
arguments developed by M\'{e}tivier and Schochet \cite{MS01}, see also
some extensions in \cite{A05,A06,LST}.

\begin{proof}[Proof of the convergence part of Theorem \ref{main}]
 The uniform estimate \eqref{conda} implies that
 \begin{align*}
 \sup_{\tau\in [0,T_0]} \|(p^\epsilon,\u^\epsilon,\H^\epsilon)(\tau)\|_{H^{s}}
 +  \sup_{\tau\in [0,T_0]} \| (\theta^\epsilon-\bar\theta)(\tau)\|_{H^{s+1}}<+\infty.
 \end{align*}
 Thus,  after extracting a subsequence, one has
   \begin{align}
      & (p^\epsilon, \u^\epsilon) \rightharpoonup (\bar p, \w )
    &  \text{weakly-}\ast \ \text{in} & \quad\quad L^\infty(0,T_0; H^s(\mathbb{R}^3)),
\label{heata}\\
 &   \H^\epsilon  \rightharpoonup  \t
   &   \text{weakly-}\ast \ \text{in} & \qquad L^\infty(0,T_0; H^s(\mathbb{R}^3)),
\label{heataa}\\
   & \theta^\epsilon-\bar\theta   \rightharpoonup   \vartheta-\bar\theta   \!\!\!\!\!\!
    &  \text{weakly-}\ast \ \text{in} & \qquad  L^\infty(0,T_0; H^{s+1}(\mathbb{R}^3)).
\label{heatb}
\end{align}
 It follows from the equations for $\H^\epsilon$ and $\theta^\epsilon$ that
 \begin{align}\label{heatbb}
   \partial_t \H^\epsilon,\, \partial_t\theta^\epsilon \in C([0,T_0],H^{s-2}(\mathbb{R}^3)).
 \end{align}
 \eqref{heataa}--\eqref{heatbb} implies, after
  further extracting a subsequence, that  for all $s'<s$,
\begin{align}
& \H^\epsilon  \rightarrow \t  \!\!\!\!\!\!\!\!\!\!\!\!\!  &  \text{strongly in}
  & \quad C([0,T_0],H^{s'}_{\mathrm{loc}}(\mathbb{R}^3)),\label{heatc}\\
   &       \theta^\epsilon -\bar \theta \rightarrow \vartheta -\bar\theta \!\!\!\!\!\!\!\!\!\!\!\!\!\!\!\!\!\!\!\!\!\!\!\! \!\!\!\!\!\!\!
    &
   \text{strongly in}&  \quad  C([0,T_0],
         H^{s'+1}_{\mathrm{loc}}(\mathbb{R}^3)), \label{heatd}
\end{align}
where the limits $\t \in C([0,T_0],
H^{s'}_{\mathrm{loc}}(\mathbb{R}^3))\cap
L^\infty(0,T_0;H^{s}_{\mathrm{loc}}(\mathbb{R}^3))$
and $\vartheta -\bar \theta \in C([0,T_0],\linebreak
H^{s'+1}_{\mathrm{loc}}(\mathbb{R}^3))\cap L^\infty(0,T_0;H^{s+1}_{\mathrm{loc}}(\mathbb{R}^3))$.

Similarly, from \eqref{exta} we get
\begin{align}
 &  \cu \big(e^{ -\theta^\epsilon} \u^\epsilon\big) \rightarrow \cu \big(e^{-\vartheta} \w \big)
 \quad \text{strongly in}  \quad    C([0,T_0],H^{s'-1}_{\mathrm{loc}}(\mathbb{R}^3))
 \label{heate}
  \end{align}
for all $ s'<s$.

In order to obtain the limit system, one needs to show that the
limits in \eqref{heata} hold in the strong topology of
$L^2(0,T_0;H^{s'}_{\mathrm{loc}}(\mathbb{R}^3))$ for all $s'<s$. To this end,
we first show that $\bar p=0$ and $\dv(2\w - \bar \kappa  e^\vartheta\nabla \vartheta)=0$.
In fact, the equations \eqref{hnaaa} and \eqref{hnabb} can be rewritten as
\begin{align}
& \epsilon \, \partial_t p^\epsilon+
\dv(2\u^\epsilon-\bar \kappa   e^{-\epsilon p^\epsilon+\theta^\epsilon}\nabla \theta^\epsilon)
= \epsilon f^\epsilon,\label{heatf}\\
&\epsilon\, e^{-\theta^\epsilon} \partial_t\u^\epsilon+ \nabla p^\epsilon
  = \epsilon\, \mathbf{g}^\epsilon. \label{heatg}
\end{align}
By virtue of \eqref{conda},  $f^\epsilon$ and  $\mathbf{g}^\epsilon$ are uniformly bounded
in $C([0,T_0],H^{s-1}(\mathbb{R}^3))$.
Passing to the weak limit in \eqref{heatf} and \eqref{heatg}, respectively, we see
that $\nabla \bar p=0$ and $\dv(2\w -\bar \kappa e^\vartheta\nabla \vartheta)=0$. Since $\bar p\in
L^\infty (0,T_0;H^s(\mathbb{R}^3))$, we infer that $\bar p=0$.

Notice that by virtue of \eqref{heate},
the strong compactness for the incompressible component of $e^{ -\theta^\epsilon} \u^\epsilon$ holds.
So, it is sufficient to prove the following proposition on the
acoustic components in order to get the strong convergence of  $\u^\epsilon$.

\begin{prop}\label{LC}
Suppose that the assumptions in Theorem \ref{main} hold. Then,
$p^\epsilon$ converges to $0$ strongly in $L^2(0,T_0;
H^{s'}_{\mathrm{loc}}(\mathbb{R}^3))$
 and $\dv(2\u^\epsilon-\bar \kappa   e^{-\epsilon p^\epsilon+\theta^\epsilon}\nabla \theta^\epsilon)$
converges to $0$ strongly in $L^2(0,T_0;
H^{s'-1}_{\mathrm{loc}}(\mathbb{R}^3))$ for all $s'<s$.
\end{prop}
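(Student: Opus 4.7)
Following the strategy of M\'etivier--Schochet \cite{MS01}, I would read \eqref{heatf}--\eqref{heatg} as an $\epsilon$-singular symmetric hyperbolic system for the acoustic variables $(p^\epsilon,\u^\epsilon)$, in which the coefficients depend on the slow variable $\theta^\epsilon$ and the right-hand sides $f^\epsilon,\g^\epsilon$ are uniformly bounded in $C([0,T_0];H^{s-1}(\R^3))$ by Proposition \ref{hPb} together with the time-derivative bounds \eqref{t11}--\eqref{theta1}. Applying $\epsilon\partial_t$ to \eqref{heatf} and $\dv$ to $e^{\theta^\epsilon}$ times \eqref{heatg}, then combining, I would derive an acoustic wave equation of the form
\begin{equation*}
\epsilon^2\partial_t\bigl(e^{-\theta^\epsilon}\partial_t p^\epsilon\bigr)-2\Delta p^\epsilon=\bar\kappa\,\partial_t\dv\bigl(e^{-\epsilon p^\epsilon+\theta^\epsilon}\nabla\theta^\epsilon\bigr)+\epsilon\,\mathcal{R}^\epsilon,
\end{equation*}
where $\mathcal{R}^\epsilon$ is uniformly bounded in $C([0,T_0];H^{s-2}(\R^3))$ and the first term on the right is a strongly converging source thanks to \eqref{heatd}.

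\textbf{Application of the local energy decay theorem.} I would then invoke the dispersive estimate of M\'etivier and Schochet for wave equations with variable coefficients in $\R^3$. Its hypotheses require (i) a uniform $H^s$-bound on $p^\epsilon$, furnished by \eqref{conda}; (ii) local strong compactness of the coefficient $\theta^\epsilon\to\vartheta$ in $C([0,T_0];H^{s'+1}_{\mathrm{loc}})$, ensured by \eqref{heatd}; and (iii) the limit coefficient $e^{-\vartheta}$ must approach a constant at spatial infinity at a prescribed polynomial rate. Hypothesis (iii) is exactly what the decay assumption \eqref{dacay} provides: it passes to the limit and yields $|\vartheta(x)-\bar\theta|\lesssim|x|^{-1-\zeta}$ together with $|\nabla\vartheta(x)|\lesssim|x|^{-2-\zeta}$. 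The conclusion of \cite{MS01} is that $p^\epsilon\to 0$ strongly in $L^2(0,T_0;L^2_{\mathrm{loc}}(\R^3))$; interpolating against the uniform $H^s$-bound upgrades this to $L^2(0,T_0;H^{s'}_{\mathrm{loc}}(\R^3))$ for every $s'<s$. Returning to \eqref{heatf} and noting that $\epsilon\partial_t p^\epsilon\to 0$ in $C([0,T_0];H^{s-1})$ by \eqref{t11}, one reads off
\begin{equation*}
\dv\bigl(2\u^\epsilon-\bar\kappa e^{-\epsilon p^\epsilon+\theta^\epsilon}\nabla\theta^\epsilon\bigr)=\epsilon f^\epsilon-\epsilon\partial_t p^\epsilon\longrightarrow 0
\end{equation*}
strongly in $L^2(0,T_0;H^{s-1}(\R^3))$, hence a fortiori in $L^2(0,T_0;H^{s'-1}_{\mathrm{loc}}(\R^3))$ for all $s'<s$.

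\textbf{Main obstacle.} The principal technical point is to verify that the magnetic contribution $(\cu\H^\epsilon)\times\H^\epsilon$ appearing in $\g^\epsilon$ and the nonlinear heat flux do not destroy the wave structure of the reduced equation. The key is that $\H^\epsilon$ converges strongly in $C([0,T_0];H^{s'}_{\mathrm{loc}})$ by \eqref{heatc} and is not oscillating at the acoustic scale, so these terms enter as compact, strongly converging sources rather than as a genuine perturbation of the principal symbol. Combined with the uniform estimates of Section~3, this reduces the argument to the M\'etivier--Schochet machinery essentially as in the compressible Navier--Stokes treatments of Alazard \cite{A06} and Levermore--Sun--Trivisa \cite{LST}.
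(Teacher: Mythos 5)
Your overall strategy is the paper's (reduce to the M\'etivier--Schochet local energy decay Lemma \ref{LD} through a wave equation for $p^\epsilon$ built from \eqref{heatf}--\eqref{heatg}, then interpolate against the uniform $H^s$ bound), but the wave equation you display is not what the operations you describe produce, and in the form written Lemma \ref{LD} cannot be applied. Applying $\epsilon\partial_t$ to \eqref{heatf} and $\dv$ to $e^{\theta^\epsilon}$ times \eqref{heatg} and combining gives
\begin{equation*}
\epsilon^2\partial_{tt}p^\epsilon-2\,\dv\big(e^{\theta^\epsilon}\nabla p^\epsilon\big)
=\epsilon\Big(\epsilon\,\partial_t f^\epsilon+\bar\kappa\,\partial_t\dv\big(e^{-\epsilon p^\epsilon+\theta^\epsilon}\nabla\theta^\epsilon\big)-2\,\dv\big(e^{\theta^\epsilon}\mathbf{g}^\epsilon\big)\Big),
\end{equation*}
which is the paper's \eqref{heatj}: the heat flux enters the source \emph{with a prefactor} $\epsilon$, because it sits inside the $O(1)$ combination that $\epsilon\partial_t$ hits. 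In your display it appears at order one; that term does not tend to zero (its limit is $\bar\kappa\,\partial_t\dv(e^{\vartheta}\nabla\vartheta)$, generically nonzero since the temperature has finite variation), and Lemma \ref{LD} requires the source to converge to zero strongly in $L^2(0,T;L^2)$ --- ``strongly converging'' to a nonzero limit is not enough. The placement of the temperature coefficient is equally essential: if you insist on the constant-coefficient principal part $2\Delta p^\epsilon$ with the weight $e^{-\theta^\epsilon}$ on $\partial_t$, then rewriting $\dv(e^{\theta^\epsilon}\nabla p^\epsilon)$ leaves over the $O(1)$ term $2\nabla\theta^\epsilon\cdot\nabla p^\epsilon$, which converges only weakly to zero (all we know is $\nabla p^\epsilon\rightharpoonup 0$) and therefore cannot be absorbed into $c^\epsilon$. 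The variable coefficient $e^{\theta^\epsilon}$ must be kept in divergence form inside the principal part; it is precisely for this coefficient that the decay hypothesis \eqref{dacay} and the local compactness \eqref{heatd} are invoked, whereas with $2\Delta$ the coefficient hypotheses of Lemma \ref{LD} would be vacuous --- a sign the reduction has gone wrong. (Your remark that the magnetic terms enter only as $O(\epsilon)$, strongly compact sources is correct and is exactly how they are treated in $\epsilon F^\epsilon$.)

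The deduction of the second convergence also fails as written. Estimate \eqref{t11} gives only the uniform bound $\|\epsilon\partial_t p^\epsilon\|_{H^{s-1}}\le C$, not $\epsilon\partial_t p^\epsilon\to 0$; indeed, by \eqref{heatf}, $\epsilon\partial_t p^\epsilon=\epsilon f^\epsilon-\dv\big(2\u^\epsilon-\bar\kappa e^{-\epsilon p^\epsilon+\theta^\epsilon}\nabla\theta^\epsilon\big)$, so the smallness of $\epsilon\partial_t p^\epsilon$ is \emph{equivalent} to the very statement you are proving, and it is genuinely $O(1)$ because of the acoustic oscillations --- your argument is circular. Moreover the convergence you claim, in the global space $L^2(0,T_0;H^{s-1}(\R^3))$, is stronger than what a local-energy-decay argument can give; the proposition only asserts (and only needs) convergence in $H^{s'-1}_{\mathrm{loc}}$. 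The correct route, which is what the paper's ``Similarly'' refers to, is to run the M\'etivier--Schochet argument a second time for the divergence combination (equivalently for $\epsilon\partial_t p^\epsilon$), which satisfies a wave equation of the same type with a source still vanishing strongly in $L^2(0,T_0;L^2)$, and then interpolate with the uniform $H^{s}$/$H^{s+1}$ bounds exactly as for $p^\epsilon$.
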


The proof of Proposition \ref{LC} is based on the following dispersive estimates
on the wave equation obtained by M\'{e}tivier and Schochet \cite{MS01}
and reformulated in \cite{A06}.
\begin{lem} {\rm (\cite{MS01,A06})}\label{LD}
   Let $T>0$ and $v^\epsilon$ be a bounded sequence in $C([0,T],H^2(\mathbb{R}^3))$, such that
   \begin{align*}
\epsilon^2\partial_t(a^\epsilon \partial_t v^\epsilon)-\nabla\cdot (b^\epsilon \nabla v^\epsilon)=c^\epsilon,
   \end{align*}
where $c^\epsilon$ converges to $0$ strongly in $L^2(0,T;L^2(\mathbb{R}^3))$.
Assume further that for some $s> 3/2+1$, the
coefficients $(a^\epsilon,b^\epsilon)$ are uniformly bounded in
$C([0,T],H^s(\mathbb{R}^3))$ and converge in
$C([0,T],H^s_{\mathrm{loc}}(\mathbb{R}^3))$ to a limit $(a,b)$
satisfying the decay estimates
\begin{gather*}
   |a(x,t)-\hat a|\leq C_0 |x|^{-1-\zeta}, \quad |\nabla_x a(x,t)|\leq C_0 |x|^{-2-\zeta}, \\
 |b(x,t)-\hat b|\leq C_0 |x|^{-1-\zeta}, \quad |\nabla_x b(x,t)|\leq C_0 |x|^{-2-\zeta},
\end{gather*}
for some positive constants $\hat a$, $\hat b$, $C_0$
and $\zeta$. Then the sequence $v^\epsilon$ converges to $0$
strongly in $L^2(0,T; L^2_{\mathrm{loc}}(\mathbb{R}^3))$.
\end{lem}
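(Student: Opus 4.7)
The plan is to prove this local energy decay by exploiting the fact that solutions of the equation oscillate at frequency of order $1/\epsilon$, so that after time-averaging they spread out to spatial infinity. First I would rewrite the equation in self-adjoint form by introducing the operator $L^\epsilon u := -(a^\epsilon)^{-1}\nabla\cdot(b^\epsilon\nabla u)$, which is non-negative and self-adjoint on the weighted space $L^2(\R^3;\,a^\epsilon dx)$. The equation then takes the form $\epsilon^2\partial_t^2 v^\epsilon + L^\epsilon v^\epsilon = (\text{lower order in }\partial_t a^\epsilon) + (a^\epsilon)^{-1}c^\epsilon$, and the assumed uniform $H^s$ bounds together with the local strong convergence of $(a^\epsilon,b^\epsilon)$ let me pass the principal symbol of $L^\epsilon$ to the limit operator $Lu := -a^{-1}\nabla\cdot(b\nabla u)$.

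The key spectral ingredient I need is that $L$, viewed as a self-adjoint operator on $L^2(a\,dx)$, has no embedded eigenvalues in $(0,\infty)$ and trivial kernel. Since the decay assumptions on $a$ and $b$ make $L$ a short-range perturbation of $(\hat a/\hat b)(-\Delta)$, a Rellich/Carleman uniqueness argument, using both the pointwise bounds $|a-\hat a|\leq C|x|^{-1-\zeta}$ and the gradient bounds $|\nabla a|\leq C|x|^{-2-\zeta}$ (and likewise for $b$), rules out $L^2$ eigenfunctions at positive energy. Absence of kernel follows from a Dirichlet-energy argument: if $Lu=0$ with $u\in L^2(a\,dx)$, then $\int b|\nabla u|^2 dx=0$, forcing $u$ to be constant and hence zero.

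Next I would localize: fix $\chi\in C_c^\infty(\R^3)$ and estimate $\|\chi v^\epsilon\|_{L^2(0,T;L^2)}$. Via the Duhamel formula for the cosine/sine propagators of $L^\epsilon$ with fast clock $t/\epsilon$, the contributions from the data $v^\epsilon(0)$ and $\partial_t v^\epsilon(0)$ take the schematic form
\begin{equation*}
\chi\cos\bigl(\tfrac{t}{\epsilon}\sqrt{L^\epsilon}\bigr)v^\epsilon(0) \quad\text{and}\quad \chi\,\epsilon\sin\bigl(\tfrac{t}{\epsilon}\sqrt{L^\epsilon}\bigr)(L^\epsilon)^{-1/2}\partial_t v^\epsilon(0),
\end{equation*}
while the inhomogeneous contribution from $c^\epsilon$ is $o(1)$ in $L^2L^2$ directly since $c^\epsilon\to 0$ there. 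Because $\chi(L+1)^{-1}$ is compact on $L^2(a\,dx)$ and $L$ has purely continuous positive spectrum (by the previous step), the RAGE theorem yields $\int_0^T\|\chi e^{it\sqrt{L}/\epsilon}u\|_{L^2}^2\,dt\to 0$ for each fixed $u\in L^2$; a uniform-boundedness and density argument upgrades this to the full sequence, and the local strong convergence of $(a^\epsilon,b^\epsilon)$ allows me to replace $L^\epsilon$ by $L$ up to a remainder that vanishes on the support of $\chi$.

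The main obstacle is the Rellich-type uniqueness step, namely showing $L$ has no positive eigenvalues. With the decay rate $|x|^{-1-\zeta}$ sitting exactly at the short-range threshold, this requires a careful Carleman estimate in the exterior of a large ball together with a unique continuation at infinity; the gradient hypotheses on $a$ and $b$ are needed precisely to control the first-order terms appearing when $L$ is conjugated into Schr\"odinger form. Once this spectral fact is in hand, the RAGE-based local decay and the passage from $L^\epsilon$ to $L$ are essentially routine.
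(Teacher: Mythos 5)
This lemma is not proved in the paper at all: it is quoted verbatim from M\'etivier--Schochet \cite{MS01} (reformulated in \cite{A06}), so the only fair comparison is with that known proof. Your plan does identify its true heart, namely that the limit operator $L=-a^{-1}\nabla\cdot(b\nabla\cdot)$, self-adjoint on $L^2(a\,dx)$, has trivial kernel and no positive $L^2$ eigenvalues, and that the decay hypotheses on $(a,b)$ and their gradients are what make this work (one quibble: $|x|^{-1-\zeta}$ is strictly short-range, not ``exactly at the threshold''). But the mechanism you propose to convert this spectral fact into local decay has several genuine gaps. First, the coefficients $a^\epsilon,b^\epsilon$ depend on $t$ as well as $x$, so there are no cosine/sine propagators $\cos(t\sqrt{L^\epsilon}/\epsilon)$ and no Duhamel formula of the form you write; no hypothesis on $\partial_t a^\epsilon$ is available, and freezing the coefficients in time cannot be dismissed as a perturbation because the solution oscillates $O(1/\epsilon)$ times on any fixed slow-time interval. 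Second, your claim that the inhomogeneous contribution of $c^\epsilon$ is ``$o(1)$ in $L^2L^2$ directly'' is unjustified: inverting $\epsilon^2\partial_t^2+L$ costs a factor of order $\epsilon^{-1}L^{-1/2}$, which is unbounded near the bottom of the spectrum, so $c^\epsilon\to0$ in $L^2(0,T;L^2)$ alone does not make that term small in a propagator-based argument. Third, the RAGE theorem gives decay for each fixed vector (equivalently, uniformly on compact sets of data), whereas here the data $v^\epsilon(0)$, $\epsilon\partial_t v^\epsilon(0)$ merely form a bounded, $\epsilon$-dependent family; the ``uniform-boundedness and density'' upgrade does not work, since mass located far from $\mathrm{supp}\,\chi$ can reach it within time $T$ at propagation speed $O(1/\epsilon)$, and no global $L^2$-compactness of the data is available. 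For the same reason, replacing $L^\epsilon$ by $L$ inside a propagator acting over fast times $T/\epsilon$ cannot be justified by convergence of the coefficients in $H^s_{\mathrm{loc}}$ only: the evolution on $\mathrm{supp}\,\chi$ sees the coefficients on a region of diameter $O(T/\epsilon)$.

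For contrast, the proof in \cite{MS01,A06} never constructs propagators. It argues by contradiction: if the local $L^2$ norm does not vanish, one localizes in the fast time frequency $\tau$, uses the uniform $C([0,T];H^2)$ bound and Rellich's theorem to extract profiles converging strongly in $L^2_{\mathrm{loc}}$, and passes to the limit in the equation tested against oscillations $e^{i\tau t/\epsilon}$ -- so the slow time enters only as a parameter, the $c^\epsilon$-term disappears because the equation is used only distributionally, and a nonzero profile $w\in L^2$ would satisfy $\tau^2 a\,w+\nabla\cdot(b\nabla w)=0$ with $\tau\neq0$ (the case $\tau=0$ being handled by the weak limit), contradicting exactly the spectral theorem you isolated. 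So your key lemma is the right one, but the surrounding RAGE/Duhamel scaffolding would need to be replaced by this compactness and profile-extraction argument (or repaired to address the four points above) before the proof can be considered complete.
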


\begin{proof}[Proof of Proposition \ref{LC}]
We fist show that $p^\epsilon$ converges to $0$ strongly  in
$L^2(0,T_0; \linebreak H^{s'}_{\mathrm{loc}}(\mathbb{R}^3))$ for all $s'<s$. Applying
$\epsilon^2 \partial_t$ to \eqref{hnaaa}, we find that
\begin{align}  \label{heath}
 &\epsilon^2\partial_t \{\partial_t p^\epsilon +(\u^\epsilon \cdot\nabla)p^\epsilon\}
   + {\epsilon}\partial_t\big\{\dv(2\u^\epsilon-\bar \kappa
   e^{-\epsilon p^\epsilon+\theta^\epsilon}\nabla \theta^\epsilon)\big\}\nonumber\\
 & \quad     =\epsilon^3\partial_t\big\{ e^{-\epsilon p^\epsilon}
[\bar\nu |\cu\H^\epsilon|^2
+\Psi(\u^\epsilon):\nabla\u^\epsilon]\big\}
+\epsilon^2 \partial_t\big\{\bar \kappa  e^{-\epsilon p^\epsilon
+\theta^\epsilon}\nabla p^\epsilon \cdot \nabla  \theta^\epsilon\big\}.
\end{align}
Dividing \eqref{hnabb} by $e^{-\theta^\epsilon}$
and then applying the operator \emph{div}  to the resulting equations, one gets
\begin{align} \label{heati}
\epsilon\partial_t\dv\u^\epsilon+ \dv \big(e^{\theta^\epsilon} {\nabla p^\epsilon}\big)
= & -\epsilon \dv\{(\u^\epsilon\cdot\nabla)\u^\epsilon\}  \nonumber\\
 &  + \epsilon \dv\big\{ e^{- \epsilon p^\epsilon+\theta^\epsilon}
 [(\cu \H)\times \H+\dv\Psi^\epsilon(\u^\epsilon)]\big\}.
\end{align}
Subtracting \eqref{heati} from \eqref{heath}, we have
\begin{align}\label{heatj}
\epsilon^2\partial_t \Big(\frac12\partial_t p^\epsilon\Big)
  -\dv \big(e^{\theta^\epsilon} {\nabla p^\epsilon}\big)
  = \epsilon F^\epsilon(p^\epsilon, \u^\epsilon, \H^\epsilon,\theta^\epsilon),
\end{align}
where $F^\epsilon(p^\epsilon, \u^\epsilon, \H^\epsilon,\theta^\epsilon)$ is a
smooth function in its variables with $F(0)=0$.
By the uniform boundedness of $(p^\epsilon, \u^\epsilon, \H^\epsilon,\theta^\epsilon)$
one infers
 that
\begin{align*}
\epsilon F^\epsilon(p^\epsilon, \u^\epsilon, \H^\epsilon,\theta^\epsilon) \rightarrow 0
\quad \text{strongly in} \quad L^2(0,T_0; L^2 (\mathbb{R}^3)).
\end{align*}

By the  strong convergence of $\theta^\epsilon$, the  initial conditions \eqref{dacay},
and the arguments in Section  8.1 in \cite{A06},  one can easily prove that
the coefficients in \eqref{heatj} satisfy the conditions in
Lemma \ref{LD}.
Therefore, we can apply Lemma \ref{LD} to obtain
\begin{align*}
p^\epsilon \rightarrow 0 \quad \text{strongly in} \quad
L^2(0,T_0; L^2_{\mathrm{loc}}(\mathbb{R}^3)).
\end{align*}
Since $p^\epsilon$ is bounded uniformly in $C([0,T_0], H^s(\mathbb{R}^3))$,
an interpolation argument gives
\begin{align*}
p^\epsilon \rightarrow 0 \quad \text{strongly in}
 \quad    L^2(0,T_0;  H^{s'}_{\mathrm{loc}}(\mathbb{R}^3))\ \ \text{for all} \ \  s'<s.
\end{align*}

Similarly, we can obtain the strong convergence of  $\dv(2\u^\epsilon
-\kappa^\epsilon e^{-\epsilon p^\epsilon+\theta^\epsilon}\nabla \theta^\epsilon)$.
This completes the proof.
\end{proof}

We continue our proof of Theorem \ref{main}. It follows from Proposition \ref{LC} and \eqref{heatd} that
\begin{align*}
\dv\,\u^\epsilon \rightarrow \dv\,\w\quad \text{strongly in}
\quad L^2(0,T_0; H^{s'-1}_{\mathrm{loc}}(\mathbb{R}^3)).
\end{align*}
Thus, using \eqref{heate}, one obtains
\begin{align*}
 \u^\epsilon \rightarrow \w\quad \text{strongly in} \quad L^2(0,T_0;
H^{s'}_{\mathrm{loc}}(\mathbb{R}^3))\qquad\mbox{for all }s'<s.
\end{align*}
By \eqref{heatc}, \eqref{heatd}, and Proposition \ref{LC}, we find that
\begin{equation*}
\begin{array}{lcl}
\nabla \u^\epsilon \rightarrow \nabla  \w    & \text{strongly in}\quad
&  L^2(0,T_0; H^{s'-1}_{\mathrm{loc}}(\mathbb{R}^3)),\\
\nabla \H^\epsilon \rightarrow \nabla  \t \qquad  & \text{strongly in}\quad &
L^2(0,T_0; H^{s'-1}_{\mathrm{loc}}(\mathbb{R}^3)),\\
\nabla \theta^\epsilon \rightarrow \nabla  \vartheta    & \text{strongly in}\quad  &
L^2(0,T_0; H^{s'-1}_{\mathrm{loc}}(\mathbb{R}^3)).
\end{array}
\end{equation*}
Passing to the limits in the equations for $p^\epsilon$, $\H^\epsilon$, and
$\theta^\epsilon$, respectively, one sees that the limit $(0, \w,\t, \vartheta)$ satisfies,
in the sense of distributions, that
\begin{align}
&\dv(2\w -\bar{\kappa}\, e^\vartheta\nabla \vartheta)=0, \label{heatk}   \\
&\partial_t\t -\cu(\w\times\t)-\bar \nu\Delta\t=0,\quad
\dv\t=0,    \label{heatl} \\
&\partial_t\vartheta  +(\w\cdot\nabla)\vartheta +\dv \w
=\bar \kappa  \, \dv (e^{\vartheta}\nabla  \vartheta).\label{heatm}
\end{align}

On the other hand, applying the operator \emph{curl} to the momentum equations \eqref{hnabb},
using the equations \eqref{hnaaa} and \eqref{hnadd}
on $p^\epsilon$ and $\theta^\epsilon$,
and then taking to the limit on the resulting equations, we deduce that
\begin{align*}
\cu\!\big\{\partial_t \big(e^{-\vartheta} \w)+\dv\big(\w e^{-\vartheta}
\otimes \w\big)-(\cu \t)\times \t-\dv\Phi(\w)\big\}=0
\end{align*}
holds in the sense of distributions. Therefore it follows from \eqref{heatk}--\eqref{heatm} that
\begin{align}
e^{-\vartheta}\{\partial_t\w+(\w\cdot\nabla)\w\}+\nabla \pi
  =(\cu \t)\times \t+\dv\Phi(\w), \label{heatn}
  \end{align}
for some function $\pi$.

Following the same arguments as those in the proof of
Theorem 1.5 in \cite{MS01}, we conclude that $(\w, \t, \vartheta)$
satisfies the initial condition
\begin{align}\label{heato}
 (\w,\t, \vartheta)|_{t=0} =(\w_0,\t_0, \vartheta_0),
\end{align}
where $\w_0$ is determined by
\begin{align}\nonumber
   \dv(2\w_0 -\bar{\kappa}\, e^{\vartheta_0}\nabla \vartheta_0)=0,
   \quad \cu(e^{-\vartheta_0}\w_0)= \cu(e^{-\vartheta_0}\u_0).
\end{align}
Moreover, the standard
iterative method shows that the system \eqref{heatk}--\eqref{heatn} with
initial data  \eqref{heato} has a unique solution $(\w^*, \t^*, \vartheta^*-\bar \theta)\in
C([0,T_0],H^s(\mathbb{R}^3))$. Thus, the uniqueness of solutions
to the limit system \eqref{heatk}--\eqref{heatn} implies that
the above convergence holds for
the full sequence of $(p^\epsilon, \u^\epsilon, \H^\epsilon, \theta^\epsilon)$.
Therefore the proof is completed.
\end{proof}

\bigskip \noindent
{\bf Acknowledgements:}
The authors are very grateful to the anonymous  referees for their constructive
comments and helpful suggestions.
This work was partially done when Li was visiting the Institute
of Mathematical Sciences, CUHK during the summer of 2011. He would like to thank the institute for hospitality.
Jiang is supported in part by the National Basic Research Program (Grant Nos. 2011CB309705, 2014CB745000), NSFC (Grant Nos. 11229101, 11371065), and Beijing Center for Mathematics and Information Interdisciplinary Sciences.  Ju is supported in part by NSFC (GrantNo.11171035) and BJNSF (Grant No. 1142001).  Li is supported in part by NSFC (Grant Nos. 11271184, 10971094), NCET-11-0227, PAPD, and the Fundamental Research Funds for the Central Universities. Xin is supported in part by Zheng Ge Ru Foundation, Hong Kong RGC Earmarked Research Grants CUHK-4041/11P and CUHK-4048/13P, The Focused Investment Scheme-Scheme B at The Chinese University of Hong Kong, and CAS-Croucher Funding Scheme for Joint Laboratories.




\end{document}